\newcommand\quotient[2]{\raise1ex\hbox{$#1$}\Big/\lower1ex\hbox{$#2$}}
\newcommand{\Z}{\mathbb{Z}}
\newcommand{\Out}{\operatorname{Out}}
\newcommand{\Aut}{\operatorname{Aut}}
\newcommand{\outo}[1]{\ensuremath{\operatorname{Out}^0\left({#1}\right)}}
\newcommand{\outoW}{\outo{\cox{\Gamma}}}
\newcommand{\cox}[1]{\ensuremath{W_{#1}}}
\newcommand{\st}{\ensuremath{\operatorname{st}}}
\newcommand{\lk}{\ensuremath{\operatorname{lk}}}
    \newcommand{\link}[1]{\ensuremath{\lk\left({#1}\right)}}
\newtheorem{thm}{Theorem}[section]
\newtheorem{thmspecial}{Theorem}
\newtheorem{corspecial}[thmspecial]{Corollary}
\newtheorem{lem}[thm]{Lemma}
\theoremstyle{remark}
\newtheorem{rem}[thm]{Remark}
\theoremstyle{definition}
\newtheorem{defn}[thm]{Definition}
\theoremstyle{plain}
\newtheorem{prop}[thm]{Proposition}
\theoremstyle{remark}
\theoremstyle{definition}
\theoremstyle{definition}
\theoremstyle{plain}
\theoremstyle{definition}
\newcounter{claimcounter}[thm]
\numberwithin{claimcounter}{thm}
\newcounter{casecounter}
\newcounter{subcasecounter}[casecounter]
\numberwithin{subcasecounter}{casecounter}
\newcommand{\mids}{\, | \, }
\newcommand{\m}{\ensuremath{^{-1}}}
\renewcommand{\phi}{\varphi}
\renewcommand{\case}[1]{\smallskip \noindent {\textbf{Case #1.}}}
\renewcommand{\Im}{\operatorname{Im}}
\title[Out of a RACG is either large or virtually abelian]{Outer automorphism groups of right-angled Coxeter groups are either large or virtually abelian}
\author{Andrew Sale}
\author{Tim Susse}
\begin{document}
    \begin{abstract}
  	We generalise the notion of a separating intersection of links (SIL)
    to give necessary and sufficient criteria on the defining graph $\Gamma$ of a right-angled Coxeter group $W_\Gamma$ so that its outer automorphism group is large:
    that is, it contains a finite index subgroup that admits the free group $F_2$ as a quotient.
    When $\Out(W_\Gamma)$ is not large, we show it is virtually abelian. We also show that the same dichotomy holds for the outer automorphism groups of graph products of finite abelian groups.
    As a consequence, these groups have property (T) if and only if they are finite, or equivalently $\Gamma$ contains no SIL.
    \end{abstract}

\maketitle

Given a simplicial graph $\Gamma$, with vertex set $V(\Gamma)$,
we can define its associated right-angled Coxeter group (RACG) $W_\Gamma$ by considering the group with generating set $V(\Gamma)$, 
where the list of relators is that each vertex is taken to be an involution, and two vertices commute in $W_\Gamma$ if an only if they are adjacent in $\Gamma$.
The geometry and algebra of (right-angled) Coxeter groups has long been studied in many areas of Mathematics (see \cite{Davis:book} for a comprehensive reference). One of the central themes of this work has been using graph theoretic properties of $\Gamma$ to understand the group $W_\Gamma$, see \cite{BehrstockHagenSisto:coxeter, Moussong:thesis} for particular examples.

Here we continue this trend by studying $\Out(W_\Gamma)$, the outer automorphism group of a RACG, and, more generally, outer automorphism groups of graph products of finite abelian groups. 
Outer automorphism groups are of general interest to many geometric group theorists, with a significant amount of attention paid to mapping class groups (outer automorphism groups of closed surface groups), $\Out(F_n)$, and there is growing interest in outer automorphism groups of right-angled Artin groups. 
Previously, some work has been done on $\Out(W_\Gamma)$, focussing mainly on generation and presentations for the group \cite{Tits:Coxter, Muhlherr:presentation, Laurence-thesis}. More recently, Gutierrez, Piggott, and Ruane studied structural properties of outer automorphism groups of graph products of finite abelian groups in \cite{GPR_automorphisms}, and we generalize their work here.

In \cite{GPR_automorphisms}, separating intersections of links (SILs) play a crucial role. They can be defined as triples of vertices, which we denote $(x,y\mids z)$, and they determine when specific automorphisms (partial conjugations) do not commute. Gutierrez, Piggott, and Ruange showed that $\Out(W_\Gamma)$ is infinite if and only if $\Gamma$ contains a SIL.
We take this one step further, defining two variations on a SIL, which we call STILs and FSILs, and show that these characteristics of $\Gamma$ determine a dichotomy between $\Out(W_\Gamma)$ being virtually abelian, and it taking on a somewhat opposite property---that of being large.
We recall that a group $G$ is called \emph{large} if it contains a finite index subgroup $G_0$ with an epimorphism of $G_0$ onto $F_2$, the free group of rank two. 
The definitions of SILs, STILs and FSILs are given in Section \ref{sec:SILs STILS FSILS}.

\newpage

Our main theorem is the following:

\begin{thmspecial}\label{thmspecial:main}
   Let $\Gamma$ be a finite simplicial graph. Then the following are
   equivalent:
      \begin{enumerate}
        \item $\Out(W_\Gamma)$ is large;
        \item $\Out(W_\Gamma)$ is not virtually abelian;
        \item $\Gamma$ contains a STIL or an FSIL.
      \end{enumerate}
In particular, $\Out(W_\Gamma)$ obeys a strict dichotomy: it is
either virtually abelian or large.
\end{thmspecial}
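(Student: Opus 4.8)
The plan is to run the cycle of implications $(1)\Rightarrow(2)\Rightarrow(3)\Rightarrow(1)$; the concluding dichotomy is then automatic, since $(1)\Leftrightarrow(2)$ says precisely that $\Out(W_\Gamma)$ fails to be virtually abelian exactly when it is large. The implication $(1)\Rightarrow(2)$ is soft, and also records that the two alternatives are genuinely mutually exclusive: every finite-index subgroup of a virtually abelian group is again virtually abelian, hence finitely generated of polynomial growth, and since word growth does not increase under passing to quotients while $F_2$ has exponential growth, no such group can surject onto $F_2$.

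For $(3)\Rightarrow(1)$ I would start from a STIL or an FSIL in $\Gamma$ and write down the automorphisms it makes available --- suitable products of partial conjugations together with the transvections permitted by the domination relations in the configuration. The goal is to produce a surjection from $\Out(W_\Gamma)$, or from a finite-index subgroup of it, onto a group that is visibly large; this suffices because largeness is inherited by finite-index subgroups, passes to overgroups of finite index, and is pulled back along surjections. Concretely I expect each of the two configurations to yield two ``independent'' sources of infinite order --- for instance two non-commuting pairs of finite-order generators whose products have infinite order, with no commutation forced between the pairs --- so that the relevant generators of $\outoW$ admit a common quotient that is large (e.g.\ a non-elementary virtually free group, or a group surjecting onto one). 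The delicate point is making this quotient map well defined: one must check against the defining relations in the presentations of Laurence and M\"uhlherr, as refined by Gutierrez--Piggott--Ruane, that the target does not collapse, which reduces to seeing the STIL/FSIL pattern survive in an appropriate induced subgraph and verifying that every remaining generator may legitimately be sent to the identity.

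The substantive direction is $(2)\Rightarrow(3)$, which I would prove contrapositively: if $\Gamma$ contains neither a STIL nor an FSIL, then $\Out(W_\Gamma)$ is virtually abelian. After passing to the finite-index subgroup $\outoW$ of type-preserving automorphisms --- the quotient by it being a finite group of graph symmetries --- one works with the explicit finite generating set by partial conjugations and transvections. By Gutierrez--Piggott--Ruane, two of these generators fail to commute precisely when $\Gamma$ contains a SIL, so the heart of the argument is to analyse, by cases on the types of the two generators, the structure of the subgroup generated by a non-commuting pair, and --- more delicately --- of larger clusters of pairwise-interacting generators, and to show that whenever such a subgroup fails to be virtually abelian one can extract from its supporting vertices either a STIL or an FSIL. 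Equivalently, absent a STIL and an FSIL the pattern of non-commutation among the infinite-order generators is forced to be tame, each interacting cluster looking like a finite or affine Coxeter-type configuration, so that $\outoW$ is assembled from abelian and finite pieces in a way that keeps it virtually abelian. I expect this classification to be the main obstacle: the whole content of the theorem is that the two new combinatorial conditions capture \emph{every} way the commutation relations among partial conjugations and transvections can conspire to force exponential growth, so the difficulty lies in the exhaustiveness of the case analysis rather than in any single case.
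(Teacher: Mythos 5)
Your outer skeleton---the cycle $(1)\Rightarrow(2)\Rightarrow(3)\Rightarrow(1)$ with $(1)\Rightarrow(2)$ handled by growth---matches the paper, and your sketch of $(3)\Rightarrow(1)$ is in the right spirit: the paper restricts to the full subgraph spanned by the STIL or FSIL via a \emph{factor map} $\Out^0(W_\Gamma)\to\Out^0(W_{\Gamma'})$, invokes Collins' theorem that the outer automorphism group of a free product of three finite groups is virtually free, and uses M\"uhlherr's presentation to verify the image is not virtually cyclic. One correction here: transvections play no role. The paper works throughout in the subgroup $\Out^0$ generated by partial conjugations \emph{alone}, which is of finite index by M\"uhlherr and Gutierrez--Piggott--Ruane; this is not merely the kernel of the action on graph symmetries, and dropping the transvections is precisely what makes both directions tractable. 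Carrying transvections along, as you propose, would reintroduce exactly the complications that arise in the RAAG setting.

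The genuine gap is in $(2)\Rightarrow(3)$. Your contrapositive reduces to the claim that, absent a STIL and an FSIL, every ``interacting cluster'' of generators is of ``finite or affine Coxeter type,'' and you defer the exhaustive case analysis that would establish this---but that deferred analysis is the entire content of the theorem, and it is not clear that virtual abelianness of $\Out^0(W_\Gamma)$ can be assembled cluster-by-cluster in the way you suggest (distinct non-commuting pairs can share generators, so the group is not a direct or free product of its clusters). The paper takes a different and more global route: it proves that the derived subgroup $\Out^0(W_\Gamma)'$ is \emph{abelian} when $\Gamma$ is connected and has no STIL or FSIL. This is done in two steps: first, conjugates of commutators of partial conjugations are again products of such commutators (a delicate computation split according to how many distinct multipliers appear, using Lemma~\ref{lem:no_overlap} and Lemma~\ref{lem:STILfind} to rule out the bad configurations), so $\Out^0(W_\Gamma)'$ is generated by commutators of partial conjugations; second, these commutators pairwise commute. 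Since $\Out^0(W_\Gamma)$ is generated by involutions, its abelianization is finite, so an abelian derived subgroup forces virtual abelianness. The disconnected case is handled separately, where the absence of an FSIL forces exactly two components and the absence of a STIL forces each component to be nearly complete. Without either this derived-subgroup argument or a completed version of your cluster classification, the hard implication remains unproved.
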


This is a special case of a more general result we prove that applies to the outer automorphism groups of graph products of finite abelian groups.
For each vertex $v$ of a simplicial graph $\Gamma$, let $G_v$ be a group.
We define the \emph{graph product} of the family $(\Gamma, \{G_v : v\in V(\Gamma)\})$ to be the quotient of the free product $\ast_{v\in V(\Gamma)} G_v$ determined by adding the relators that $G_u$ and $G_v$ commute whenever $u$ and $v$ are adjacent in $\Gamma$.

We describe a special family of graph products that includes RACGs and RAAGs.
Let $p$ be a map that assigns to each vertex of $\Gamma$ a prime power or $\infty$,
thus forming a labelled graph $(\Gamma,p)$.
The associated graph product $G(\Gamma,p)$ is defined, in a similar manner to a RACG, with generating set $V(\Gamma)$, and a defining set of relators given by:
\begin{itemize}
	\item  $v^{p(v)}=1$ for each $v\in V(\Gamma)$ such that $p(v) \neq \infty$,
	\item  $[u,v]=1$ whenever $u$ and $v$ are adjacent in $\Gamma$.
\end{itemize}

We suppress the $p$ from the notation, writing $G(\Gamma,p) = G_\Gamma$, when it is clear that $\Gamma$ comes equipped with such a map.
We will write $p<\infty$ when $p(v)\neq \infty$ for each $v\in V(\Gamma)$.

The group $G(\Gamma,p)$ is a graph product where vertex groups are either $\Z$ or $\Z / p(v)\Z$. 
Note that any graph product of (finite) abelian groups is isomorphic to such a group (with $p<\infty$), using the structure theorem for finite abelian groups to directly decompose each vertex group. 
We call this $G(\Gamma,p)$ the \emph{standard graph presentation} of such a group.

When we allow vertices to have order greater than 2, certain SILs will result in a large outer automorphism group. Specifically, if $(x,y\mids z)$ is a SIL and either $p(x)>2$ or $p(y)>2$, then we call the SIL a \emph{non-Coxeter SIL}. The following extends Theorem~\ref{thmspecial:main}.

\begin{thmspecial}\label{thmspecial:main graph prod}
    Let $(\Gamma,p)$ be a finite simplicial labelled graph with $p<\infty$. Then the following are equivalent:
      \begin{enumerate}
        \item $\Out(G_\Gamma)$ is large;
        \item $\Out(G_\Gamma)$ is not virtually abelian;
        \item $\Gamma$ contains a STIL, an FSIL, or a non-Coxeter SIL.
      \end{enumerate}
     In particular, the outer automorphism group of a graph product of finite abelian groups obeys a strict dichotomy: it is
     either virtually abelian or large.
\end{thmspecial}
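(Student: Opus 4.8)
The plan is to prove the cycle of implications $(1)\Rightarrow(2)\Rightarrow(3)\Rightarrow(1)$; the concluding dichotomy is then a formality, since any graph either does or does not contain one of the three listed configurations. (Theorem~\ref{thmspecial:main} is the special case $p\equiv 2$: then $p(x)=p(y)=2$ for every SIL, so ``non-Coxeter SIL'' cannot occur.) The implication $(1)\Rightarrow(2)$ is immediate: a large group contains a nonabelian free subgroup, while every subgroup of a virtually abelian group is virtually abelian. For the remaining two implications I would first fix a generating set for $\Aut(G_\Gamma)$, drawing on the work of Laurence \cite{Laurence-thesis} and of Gutierrez--Piggott--Ruane \cite{GPR_automorphisms}: the finite group of label-preserving symmetries of $\Gamma$, the finite group $\prod_v\Aut(G_v)$ of factor automorphisms, the transvections $\tau_{v,w}$ (defined when $\link{v}\subseteq\st(w)$, subject to a divisibility condition relating $p(v)$ and $p(w)$), and the partial conjugations $\pi_{v,C}$ for $C$ a union of connected components of $\Gamma\setminus\st(v)$. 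Since the symmetries and factor automorphisms span a finite subgroup that normalises the rest, the subgroup $\Out^0(G_\Gamma)$ generated by the classes of transvections and partial conjugations has finite index, so it suffices to decide whether $\Out^0(G_\Gamma)$ is virtually abelian or large. A point to keep in mind throughout is that, because $p<\infty$, every transvection and every partial conjugation has finite order; largeness will therefore never come from an infinite cyclic quotient, and in the constructions below one must genuinely produce a nonabelian free image.

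For $(3)\Rightarrow(1)$ I would treat each configuration in turn and, in each case, exhibit a finite-index subgroup of $\Out(G_\Gamma)$ that surjects onto $F_2$, by first mapping onto a group already known to be large. Given a non-Coxeter SIL $(x,y\mids z)$ with, say, $p(x)>2$, the partial conjugations $\pi_{x,C}$ and $\pi_{y,C'}$ have orders $p(x)>2$ and $p(y)\ge2$, and the fact that they fail to commute along the SIL should let us build a surjection onto the free product $\Z/p(x)\ast\Z/p(y)$, which is large (for instance $\Z/3\ast\Z/2\cong PSL_2(\Z)$ is virtually $F_2$). Given a STIL, I would use the extra separation data to map onto either a non-elementary Coxeter triangle group generated by three partial conjugations, or the right-angled Artin group on a path with four vertices arising from two overlapping SIL interactions --- both are large. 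Given an FSIL, I would combine the partial conjugations of its underlying SIL with a transvection whose domination is \emph{witnessed} by the SIL datum, again landing in a non-virtually-abelian, in fact large, quotient. In every case the one subtle point is to check that the chosen automorphisms satisfy \emph{exactly} the relations of the target group: this can be done by reading relations off the presentation of $\Aut(G_\Gamma)$, or, where that does not suffice, by a ping-pong argument for an explicit action realising the quotient.

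For $(2)\Rightarrow(3)$ I would argue the contrapositive: assuming $\Gamma$ contains no STIL, FSIL, or non-Coxeter SIL, show that $\Out^0(G_\Gamma)$ is virtually abelian. The core is a systematic study of commutators of pairs of generators. Two partial conjugations commute in $\Out(G_\Gamma)$ unless the underlying vertices form a SIL; absent a non-Coxeter SIL, such a non-commuting pair involves only order-two vertices and so generates an infinite dihedral --- hence virtually cyclic --- subgroup. The absence of a STIL should force these non-commuting pairs to be sufficiently independent that the subgroup generated by all partial conjugations embeds in a direct product of finitely many infinite dihedral and finite groups. A transvection and a partial conjugation commute modulo inner automorphisms unless their supporting data form an FSIL, which we have excluded. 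Finally, the transvections among themselves generate a virtually abelian group: here $p<\infty$ is essential, since twin classes now contribute only finite groups (symmetric groups, or finite linear groups over $\Z/p^a$) and chains of dominations, which in the RAAG case would produce Heisenberg-type subgroups, become finite. Assembling these facts, $\Out^0(G_\Gamma)$ is a quotient of a subdirect product of virtually abelian groups, hence virtually abelian, and so is its finite extension $\Out(G_\Gamma)$.

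I expect the main obstacle to be the global step in $(2)\Rightarrow(3)$: pairwise tameness of the generators does not by itself force the whole group to be virtually abelian, so one must show that, once the three forbidden configurations are ruled out, the non-commutation pattern among the generators of $\Out^0(G_\Gamma)$ is rigid enough --- essentially, that its non-abelian interactions are confined to disjoint infinite-dihedral and finite blocks --- to pin down the structure. Making this precise, and in particular excluding longer chains of overlapping SIL-type interactions using only the hypothesis that there is no STIL, is where the definitions of STIL and FSIL must be exactly calibrated, and is the technical heart of the argument.
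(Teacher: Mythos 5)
Your high-level shape (the cycle of implications, reduction to a finite-index subgroup generated by elementary automorphisms, largeness via quotients, virtual abelianness via commutator analysis) matches the paper, but there are two genuine gaps plus an unnecessary complication. The complication first: for graph products of finite abelian groups the partial conjugations \emph{alone} already generate a finite-index subgroup of $\Aut(G_\Gamma)$ (M\"uhlherr; Gutierrez--Piggott--Ruane), so transvections can be discarded at the outset; your transvection--transvection and transvection--partial-conjugation analysis is not needed, and your FSIL step, which invokes ``a transvection whose domination is witnessed by the SIL datum,'' is not a real construction --- an FSIL supplies no transvection, and the paper handles it purely with the three partial conjugations $\chi^{x_i}_{x_j}$. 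The first real gap is in $(3)\Rightarrow(1)$: largeness requires \emph{quotients}, and exhibiting non-commuting finite-order partial conjugations only gives subgroups. The missing device is the factor map $\Out^0(G_\Gamma)\to\Out^0(G_{\Gamma'})$ induced by killing all vertices outside the full subgraph $\Gamma'$ spanned by the SIL/STIL/FSIL; combined with Collins' theorem that the outer automorphism group of a free product of three finite groups is virtually free, one only has to check the image is not virtually cyclic. Without something like this your targets are not reachable: since $p<\infty$ every partial conjugation is torsion, so a subgroup generated by them admits no nontrivial homomorphism to a torsion-free group such as the right-angled Artin group on a path that you propose.

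The second gap is the one you flag yourself in $(2)\Rightarrow(3)$: pairwise virtual cyclicity of the non-commuting pairs does not give virtual abelianness, and your proposed fix --- that the non-abelian interactions are ``confined to disjoint infinite-dihedral and finite blocks,'' so that the group embeds in a direct product --- is not available. Two SILs $(x_1,x_2\mid z)$ and $(x_1,x_3\mid z')$ can share the vertex $x_1$ without $\Gamma$ containing a STIL or FSIL (this is exactly case~(3) of Lemma~\ref{lem:STILfind}), so the dihedral blocks genuinely overlap. The paper's mechanism is different and is what you would need to supply: first, absence of a non-Coxeter SIL forces every $\chi^v_C$ with $p(v)>2$ to be central in $\Out^0(G_\Gamma)$, so the derived subgroup is generated by conjugates of commutators of order-two partial conjugations; then a case analysis (Lemmas~\ref{lem:conjugating commutators with two vertices involved} and \ref{lem:conjugating commutators with three vertices involved}) shows each such conjugate is again a product of commutators of partial conjugations, and finally (Lemma~\ref{lem:derived subgroup abelian}) that these commutators pairwise commute. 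Hence $\Out^0(G_\Gamma)'$ is abelian, and since $\Out^0(G_\Gamma)$ is generated by finite-order elements its abelianization is finite, giving virtual abelianness. The disconnected case needs separate (but easy) treatment. Until the factor-map step and the derived-subgroup argument are supplied, the proposal does not constitute a proof.
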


One way to interpret Theorems \ref{thmspecial:main} and \ref{thmspecial:main graph prod} is that at one extreme $\Out(G_\Gamma)$ will have many quotients 
(being large implies that any finitely generated group will be a quotient of some finite index subgroup),
while at the other it will have a heavily restricted family of quotients.

Comparing this to the situation for a right-angled Artin group (RAAG)
$A_\Gamma$, Guirardel and the first-named author
\cite{GuirardelSale-vastness} showed that the groups $\Out(A_\Gamma)$ observe a similar, but slightly less clear-cut, dichotomy when comparing the supply of quotients.
There are
several ways to describe it, but one way is as follows.
Either $\Out(A_\Gamma)$ has all finite groups involved---meaning for every finite group $H$ there is a finite index subgroup of $\Out(A_\Gamma)$ that admits $H$ as a quotient---or $\Out(A_\Gamma)$ has a finite index subgroup that admits a quotient by a finitely generated nilpotent subgroup that is isomorphic to the direct product of finitely many copies of $\operatorname{SL}(n_i,\Z)$, where each $n_i\geq 3$.

We note that the question of whether $\Out(F_n)$ is large is still open for $n\geq 4$, however they do have all finite groups involved, a consequence of the representations described by Grunewald and Lubotzky \cite{GrunewaldLubotzky:AutFree}.
This is crucial in the RAAG case, but it does not come up in the RACG case, and we are able to show they observe the stronger dichotomy.

The nilpotent normal subgroup and arithmetic quotient described above come about through the properties of transvections on $A_\Gamma$.
When dealing with RACGs, up to passing to a finite index subgroup of $\Out(W_\Gamma)$, we can ignore transvections and just focus on partial conjugations.
This allows us to ``tidy up'' the dichotomy for RACGs.
Indeed, in \cite{GuirardelSale-vastness}, SILs are used to show $\Out(A_\Gamma)$ is large in cases when transvections do not cause (potential) obstructions (see Section~\ref{sec:gens} for definitions).

Since infinite abelian groups and free groups do not have Kazhdan's
property (T), and it is a property preserved under taking finite index subgroups and quotients (see
\cite[Chapter 1]{PropT}), Theorem~\ref{thmspecial:main graph prod} implies that
$\Out(G_\Gamma)$ can have Property (T) only when it is a finite
group. Combining with \cite[Theorem 1.4]{GPR_automorphisms}, this gives the following.

\begin{corspecial}
    Let $G_\Gamma$ be a graph product of finite abelian groups. Then, $\Out(G_\Gamma)$ has Kazhdan's property (T) if and only if it is finite. 
    
    Furthermore, if $G_\Gamma$ has the standard graph presentation, then $\Out(G_\Gamma)$ has property (T) if and only if $\Gamma$ has no SIL.\end{corspecial}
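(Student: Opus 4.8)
The plan is to derive the corollary from Theorem~\ref{thmspecial:main graph prod} together with the standard permanence properties of Kazhdan's property (T). First suppose $\Out(G_\Gamma)$ has property (T). By the dichotomy in Theorem~\ref{thmspecial:main graph prod}, $\Out(G_\Gamma)$ is either large or virtually abelian. If it is large, then some finite index subgroup $G_0\leq \Out(G_\Gamma)$ surjects onto $F_2$; since property (T) is inherited by finite index subgroups and by quotients, $F_2$ would then have property (T), which is false (for instance because its abelianisation $\Z^2$ is infinite, whereas a group with property (T) has finite abelianisation). If instead $\Out(G_\Gamma)$ is virtually abelian, then it is amenable, and a discrete amenable group with property (T) is finite; hence $\Out(G_\Gamma)$ is finite. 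Either way, property (T) forces $\Out(G_\Gamma)$ to be finite. Conversely, every finite group has property (T) trivially, which establishes the first equivalence.

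For the second statement, assume that $G_\Gamma$ is given by its standard graph presentation. By \cite[Theorem 1.4]{GPR_automorphisms}, $\Out(G_\Gamma)$ is finite if and only if $\Gamma$ contains no SIL. Combining this with the equivalence just established yields that $\Out(G_\Gamma)$ has property (T) if and only if $\Gamma$ has no SIL.

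The only substantive input here is Theorem~\ref{thmspecial:main graph prod}; everything else is the elementary package of facts that (T) passes to quotients and to finite index subgroups, that an infinite discrete amenable group fails (T) (equivalently, amenable (T) groups are compact), and that $F_2$ fails (T). Accordingly there is no real obstacle: the one point worth checking is simply that the cited result of Gutierrez--Piggott--Ruane applies to graph products of finite abelian groups in their standard presentation and not merely to right-angled Coxeter groups, which is precisely the content of \cite[Theorem 1.4]{GPR_automorphisms}, so no further argument is needed.
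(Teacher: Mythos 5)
Your argument is correct and is essentially the paper's own: both derive the corollary from Theorem~\ref{thmspecial:main graph prod} via the standard permanence properties of property (T) (failure for $F_2$ and for infinite virtually abelian groups), and both obtain the second statement by combining the first with \cite[Theorem 1.4]{GPR_automorphisms}. The only cosmetic difference is that you dispatch the virtually abelian case via amenability, whereas the paper notes directly that infinite abelian groups lack (T) and that (T) passes to finite index subgroups; these are interchangeable elementary facts.
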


Obtaining such a precise statement is in sharp contrast to other commonly studied outer
automorphism groups. In particular, largeness and property
(T) are unknown for $\Out(F_n)$ when $n\ge 4$, for the outer
automorphism groups of many RAAGs, and for mapping
class groups, though there are some partial results, see for
example
\cite{Aramayona-MartinezPerez,GuirardelSale-vastness,GrunewaldLubotzky:AutFree,
GLLM:MCGinvolve}.

\medskip
The structure of the paper is as follows.
Section \ref{sec:prelims} contains the necessary definitions, as well as some preliminary results concerning the structure of graphs with STILs and FSILs.
Theorem~\ref{thmspecial:main} is proved in Sections \ref{sec:v ab} and \ref{sec:large case}.
In the former we show that in the absence of a STIL or an FSIL the commutator subgroup of $\Out^0(W_\Gamma)$, the group of outer automorphisms generated by partial conjugations---a finite index subgroup of $\Out(W_\Gamma)$---is abelian and give a finite generating set for it. Since partial conjugations are involutions, this implies $\Out(W_\Gamma)$ is virtually abelian.
In Section \ref{sec:large case}, we use factor maps to STILs and FSILs to obtain homomorphisms from $\Out^0(W_\Gamma)$ to virtually free groups, implying largeness.
Finally, Section \ref{sec:graph products} describes how the proof generalizes to graph products of finite abelian groups, obtaining Theorem~\ref{thmspecial:main graph prod}.

\section{Preliminaries}\label{sec:prelims}

\subsection{Conventions}

For commutators, we will write $[a,b]=aba\m b\m$.
We apply maps on the left, so given $f,g : X\to X$, the composition $fg(x)$ is given by $f(g(x))$.

\subsection{Generating the automorphism groups}\label{sec:gens}

In his unpublished thesis \cite{Laurence-thesis}, Laurence
investigated automorphism groups of graph products. He also looked
at two special cases, those of RAAGs (see also \cite{Laur95}), and
those of graph products of finite cyclic groups, where each vertex
group has the same order. Amongst the latter class of groups we have
RACGs. In particular, Laurence's work confirms a conjecture of
Servatius \cite{Servatius} that $\Aut(W_\Gamma)$ is generated by
three different types of elements.

There are \emph{graph symmetries}: these permute the vertices of
$\Gamma$ while preserving the graphical structure.
There are \emph{transvections}: for pairs of adjacent vertices $v,w$ in $\Gamma$
such that $\st(v)\subseteq \st(w)$, we can define an automorphism
that sends $v$ to $vw$ and fixes all other vertices. The third type
are partial conjugations (also known as locally inner
automorphisms).

\begin{defn}
    For each $v\in\Gamma$ and $C$ a connected component of $\Gamma\setminus\st(v)$, we
    define the \emph{partial conjugation} with multiplier $v$ and support $C$ on
    the vertex set of $\Gamma$:
    $$\chi^{v}_{C}(w)=\begin{cases} vwv, & \text{ if } w\in C\\
    w, & \text{ otherwise}\end{cases}.$$
This extends to an automorphism
    of $G_{\Gamma}$. Furthermore, it is not inner exactly when $\st(v)$ is
    separating in $\Gamma$ and $C\neq\Gamma\setminus\st(v)$.
\end{defn}

We also use this notation to describe partial conjugations where the multiplier is not necessarily a vertex of $\Gamma$.
For $g\in G_\Gamma$ and $C\subseteq\Gamma$, write $\chi^g_C$ for the automorphism defined by sending $w\in C$ to $gwg^{-1}$, and fixing all other vertices, if such an automorphism exists.

When dealing with
graph products $G(\Gamma,p)=G_\Gamma$ 
a generating set for $\Aut(G_\Gamma)$ was given by Corredor and Gutierrez \cite{Corredor-Gutierrez}.
Graph symmetries need to preserve the labelling, while we also need to add automorphisms which act only on one vertex group, fixing all others.
For this paper, however, we do not need them. 

It is well-known, and not hard to see, that the subgroup
$\Aut^0(W_\Gamma)$ generated by the partial conjugations has finite
index in $\Aut(W_\Gamma)$. The following says this is also true for the graph products considered in this paper.

\begin{prop}[{\cite{Muhlherr:presentation}, \cite[Theorem 1.2, Lemma 2.8, Remark 2.10]{GPR_automorphisms}}]
	Let $G_\Gamma$ be a graph product of finite abelian groups.
	
	Then the subgroup $\Aut^0(G_\Gamma)$ of automorphisms generated by the set of all partial conjugations has finite index in $\Aut(G_\Gamma)$.
\end{prop}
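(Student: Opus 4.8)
The plan is to combine the known finite generating sets for these automorphism groups with the action on the abelianization. By Laurence's thesis \cite{Laurence-thesis} when $G_\Gamma$ is a RACG, and by Corredor--Gutierrez \cite{Corredor-Gutierrez} in general, $\Aut(G_\Gamma)$ is generated by four families of automorphisms: the label-preserving graph symmetries; the \emph{factor automorphisms}, each acting as an automorphism of a single vertex group $G_v$ and fixing every other generator; the transvections $v\mapsto vg$, where $v$ is dominated by an adjacent vertex $w$ and $g\in G_w$; and the partial conjugations. I would first dispose of the symmetries and factor automorphisms: since $\Gamma$ is finite there are only finitely many label-preserving symmetries, and since each $G_v$ is finite and $V(\Gamma)$ is finite there are only finitely many factor automorphisms, so together they generate a finite subgroup $F\le\Aut(G_\Gamma)$, namely a quotient of the finite group $\Aut(\Gamma,p)\ltimes\prod_{v}\Aut(G_v)$. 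It then remains to control the transvections modulo $\Aut^0(G_\Gamma)$.

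Next I would pass to the abelianization. Since $p<\infty$, the abelian group $G_\Gamma^{\mathrm{ab}}=\bigoplus_{v}G_v$ is finite, so the natural homomorphism $\rho\colon\Aut(G_\Gamma)\to\Aut(G_\Gamma^{\mathrm{ab}})$ has finite image and hence $\ker\rho$ has finite index in $\Aut(G_\Gamma)$. Every inner automorphism acts trivially on the abelianization, and since conjugation by any element of a vertex group is a product of partial conjugations, $\Inn(G_\Gamma)\le\Aut^0(G_\Gamma)\le\ker\rho$. The proposition therefore reduces to showing the reverse containment $\ker\rho\le\Aut^0(G_\Gamma)$ (it would in fact be enough to show that $\Aut^0(G_\Gamma)$ has finite index in $\ker\rho$, but equality holds here). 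For this I would take $\alpha\in\ker\rho$, write it as a word in the generators above, and use the relators among these generators to collect all occurrences of symmetries, factor automorphisms, and transvections; because $\rho(\alpha)=1$, the collected product acts trivially on $G_\Gamma^{\mathrm{ab}}$, and one checks that such a product is then itself a product of partial conjugations. This rewriting is precisely the combinatorial core of the presentations of $\Aut(G_\Gamma)$ (and $\Out(G_\Gamma)$) obtained by M\"uhlherr \cite{Muhlherr:presentation} in the Coxeter case and by Gutierrez--Piggott--Ruane \cite{GPR_automorphisms} in general, which is what the cited results supply.

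The step I expect to be the main obstacle is exactly this last rewriting. Transvections and partial conjugations do not commute in general, and their ``commutator defect'' is governed by the star and link structure of $\Gamma$ --- this is where SIL-type phenomena enter --- so one must verify that whenever a product of transvections (together with symmetries and factor automorphisms) becomes trivial on the abelianization, all of its genuinely non-inner content can be absorbed into partial conjugations. In the graph-product setting there is the extra bookkeeping of the (possibly non-central) multipliers $g\in G_w$ appearing in the transvections and of the finite groups $\Aut(G_v)$ carried by the factor automorphisms, which is handled by the analysis in \cite{Corredor-Gutierrez} and \cite{GPR_automorphisms}.
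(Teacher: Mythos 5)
The paper does not prove this proposition at all: it is imported verbatim from M\"uhlherr and from \cite[Theorem 1.2, Lemma 2.8, Remark 2.10]{GPR_automorphisms}, so there is no in-paper argument to compare yours against. Your reduction is sound as far as it goes: partial conjugations and inner automorphisms do act trivially on the (finite) abelianization, so $\Inn(G_\Gamma)\le\Aut^0(G_\Gamma)\le\ker\rho$ and $[\Aut(G_\Gamma):\ker\rho]<\infty$, and the proposition does reduce to showing $\Aut^0(G_\Gamma)$ has finite index in $\ker\rho$. The problem is that this remaining step is the entire content of the statement, and your proposal does not actually carry it out: ``one checks that such a product is then itself a product of partial conjugations'' is precisely the theorem being cited, not a proof of it. In particular, the analogous claim fails badly in nearby settings (for free groups and RAAGs the kernel of the action on homology is strictly larger than the group generated by partial conjugations), so the equality $\ker\rho=\Aut^0(G_\Gamma)$ genuinely requires the structure theory of the references and cannot be waved through by ``collecting'' generators. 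A route closer to what those references actually establish, and which avoids the abelianization detour entirely, is: (i) $\Aut^0(G_\Gamma)$ is normal, because conjugating a partial conjugation by a symmetry, a factor automorphism, or a transvection $v\mapsto vg$ (using $\st(v)\subseteq\st(w)$ to split conjugation by $g\in G_w$ over components of $\Gamma\setminus\st(w)$) returns a product of partial conjugations; and (ii) the subgroup $F$ generated by symmetries, factor automorphisms and transvections is finite, since every element of $F$ sends each generator to a product of pairwise-commuting generators and is therefore determined by its (finite-image) action on $G_\Gamma^{\mathrm{ab}}$. Then $\Aut(G_\Gamma)=\Aut^0(G_\Gamma)\cdot F$ gives the finite index directly. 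Since the proposition is itself presented as a citation, deferring step (i)--(ii) to \cite{GPR_automorphisms} is defensible, but you should be explicit that this is where the whole proof lives rather than presenting it as routine rewriting.
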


We will focus throughout on the corresponding subgroup $\Out^0(G_\Gamma)$,
of finite index in $\Out(G_\Gamma)$.

\subsection{SILs, STILs, and FSILs}\label{sec:SILs STILS FSILS}

In the study of automorphisms of graph products, a feature of $\Gamma$ known as a SIL has become recognized as very significant. The reason for this is that it is precisely the condition on $\Gamma$ that allows for non-commuting partial conjugations.

\begin{defn}
    We say that $x_1, x_2, z$ form a \emph{SIL} (Separating Intersection of Links), denoted
    $(x_1,x_2\mids z)$, if $x_1$ and $x_2$ are not adjacent, and
    $\Gamma\setminus\left(\link{x_1}\cap\link{x_2}\right)$ has a
    component $C$ which contains $z$ and neither $x_1$ nor $x_2$.

    We will also talk of SILs $(x_1,x_2\mids Z)$ where $Z$ is a connected component of $\Gamma \setminus (\lk(x_1)\cap\lk(x_2))$ so that $(x_1,x_2\mids z)$ is a SIL whenever $z\in Z$.
\end{defn}

The following is proved in \cite[Section 4]{GPR_automorphisms} and describes precisely when partial conjugations do not commute.

\begin{lem}\label{lem:partial conj not commute}
     Suppose $x,y$ are distinct vertices. Two partial conjugations $\chi^x_C,\chi^y_D$ in $\Out(G_\Gamma)$ do not commute if and only if
     there is a SIL $(x,y\mids z)$ and either
     \begin{itemize}
        \item $z\in C=D$,
        \item $x\in D$ and $z\in C$,
        \item $y\in C$ and $z\in D$,
        \item $x\in D$ and $y\in C$.
     \end{itemize}
\end{lem}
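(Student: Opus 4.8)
The plan is to compute the commutator $[\chi^x_C,\chi^y_D]$ directly on the vertex set of $\Gamma$ and determine exactly when it acts as an inner automorphism. First I would reduce to understanding how each partial conjugation acts on a generic vertex $w$: $\chi^x_C$ conjugates $w$ by $x$ if $w\in C$ and fixes it otherwise, and similarly for $\chi^y_D$. The composite $\chi^x_C\chi^y_D$ therefore sends $w$ to a word in $\{x,y,w\}$ depending on the four possible membership patterns of $w$ in $C$ and $D$; doing the same for $\chi^y_D\chi^x_C$ and comparing, the commutator $\phi=[\chi^x_C,\chi^y_D]$ fixes every $w$ that lies in both $C\cap D$ or in neither, and on the remaining vertices it conjugates by $[x,y]$ or one of its conjugates/inverses. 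The key observation is that $[x,y]=1$ precisely when $x$ and $y$ are adjacent (this uses that the vertex groups embed and that defining relators of a graph product are exactly the stated commutation relations), so non-triviality of $\phi$ forces $x$ and $y$ non-adjacent — the first ingredient of a SIL.

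Next I would pin down the support of $\phi$. Since $x$ and $y$ are non-adjacent, neither $\chi^x_C$ nor $\chi^y_D$ is a partial conjugation with the other's multiplier in its star; one checks that $x\notin\st(x)$ issues are irrelevant here because $x$ is a vertex and $x\in C$ only if $x$ lies in a component of $\Gamma\setminus\st(x)$, which is impossible, so in fact $x\notin C$ always and likewise $y\notin D$. Thus the only membership patterns that can produce a non-trivial action of $\phi$ on $w$ are: $w\in C\cap D$ with $w$ conjugated by a non-trivial element of $\langle[x,y]\rangle$; $w\in C$, $w\notin D$, and $x\in D$ (so that $\chi^y_D$ moves the intermediate letter $x$); $w\in D$, $w\notin C$, and $y\in C$; and the degenerate cases where the roles of the vertices $x,y$ themselves are shuffled, giving $x\in D$ together with $y\in C$. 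In each such case I would show the set of affected vertices $w$ is nonempty and is contained in a single connected component of $\Gamma\setminus(\lk x\cap\lk y)$: indeed the commutator, restricted to where it acts nontrivially, is a partial conjugation by $[x,y]$, and a standard argument (a relator in $G_\Gamma$ must have even exponent sum over each conjugacy class of vertex) shows such a $\chi^{[x,y]}_E$ is an automorphism only when $E$ is a union of components of $\Gamma\setminus\st(x)$ and of $\Gamma\setminus\st(y)$ simultaneously, hence of $\Gamma\setminus(\lk x\cap\lk y)$; non-innerness of $\phi$ then forces this component $E$ to omit both $x$ and $y$, which is exactly the existence of a SIL $(x,y\mids z)$ for $z\in E$. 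Matching the four configurations of $E$ against the description above yields the four bullet points.

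The main obstacle I expect is the bookkeeping in the degenerate cases — when $x\in D$, $y\in C$, or both — where the "support" of the commutator includes the multiplier vertices themselves and the naive computation on a vertex $w$ gets entangled with how $\chi^x_C$ acts on $y$ and $\chi^y_D$ acts on $x$. Here one must be careful that $\chi^x_C(y)=xyx$ (if $y\in C$) is a reduced word and track the cancellation correctly through the four-fold composite; it is easy to drop or double a letter. I would handle this by organizing the computation into a table indexed by the truth values of the predicates $w\in C$, $w\in D$, $x\in D$, $y\in C$, filling in the reduced normal form of $\phi(w)$ in each row, and then reading off which rows give $\phi(w)\neq w$ after conjugation by a power of $[x,y]$. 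The converse direction — that a SIL in one of the four configurations forces non-commuting — is then immediate from the same table, since in each configuration the table exhibits a vertex $z\in E$ on which $\phi$ acts by a nontrivial conjugation that cannot be undone by an inner automorphism (as it is supported on a proper sub-union of components).
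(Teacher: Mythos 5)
First, a point of comparison: the paper does not prove this lemma at all --- it is quoted from \cite[Section 4]{GPR_automorphisms} --- so there is no in-paper argument to measure your proposal against. Your overall strategy (compute $[\chi^x_C,\chi^y_D]$ on the generators, observe that it acts by conjugation by $[x,y]$ and its conjugates/inverses on various pieces, then decide when the result is inner) is the natural one and matches the cited source in spirit.

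There are, however, two genuine gaps. The first is the inference that a set which is ``a union of components of $\Gamma\setminus\st(x)$ and of $\Gamma\setminus\st(y)$ simultaneously'' is ``hence'' a union of components of $\Gamma\setminus(\lk(x)\cap\lk(y))$. This goes the wrong way: since $\lk(x)\cap\lk(y)\subseteq\st(x)$, the graph $\Gamma\setminus(\lk(x)\cap\lk(y))$ is \emph{larger}, its components are \emph{coarser}, and a component of $\Gamma\setminus\st(x)$ is merely contained in some component $Z$ of $\Gamma\setminus(\lk(x)\cap\lk(y))$ without filling it. In particular $Z$ may contain $x$ or $y$ even though your support set does not, and whether $Z$ contains $x$ or $y$ is exactly what the SIL condition asks; so the step ``$E$ omits $x$ and $y$, which is exactly the existence of a SIL'' conflates two different statements. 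The second, more serious, gap is that the innerness analysis --- the actual crux of the lemma, since everything you compute is visibly nontrivial in $\Aut$ whenever $x$ and $y$ are non-adjacent and the supports interact --- is asserted rather than proved. The criterion ``not inner because supported on a proper sub-union of components'' is not established for graph products, and it does not engage with the cases $x\in D$ or $y\in C$: there the commutator moves the multipliers themselves (and possibly no other vertex at all, e.g.\ when $C\cap D=\emptyset$, so your ``the set of affected vertices $w$ is nonempty'' can fail), non-innerness is witnessed by the \emph{existence} of a vertex $z$ fixed by the commutator but lying in a component of $\Gamma\setminus(\lk(x)\cap\lk(y))$ containing neither $x$ nor $y$, and conversely, when no such $z$ exists you must actually exhibit the inner automorphism that the commutator equals. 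Your table organizes the reduced-word computation of $\phi(w)$, which is the routine half; the half that needs an argument --- deciding exactly when $\chi^g_E$ is inner for $g$ a conjugate of $[x,y]$ --- is left as an assertion and requires either centralizer theory for graph products or an analysis of the action on conjugacy classes, as in the cited source.
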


The presence (or absence) of a SIL has been exploited by various authors (for example \cite{GPR_automorphisms,CRSV_no_SIL,GuirardelSale-vastness,Day_solvable}).
For example, Gutierrez, Piggott, and Ruane show the following structure theorem:

\begin{thm}[{\cite[Theorem 1.4]{GPR_automorphisms}}]
    Let $G_\Gamma$ be a graph product of finite cyclic groups of prime power order. Then $\Out(G_\Gamma)$ is finite if and only if $\Gamma$ does not contain a SIL.
\end{thm}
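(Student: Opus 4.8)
The plan is to prove both implications directly; each is much easier than the corresponding half of Theorem~\ref{thmspecial:main}. Throughout I use, as noted above, that the subgroup $\Out^0(G_\Gamma)$ generated by the images of the partial conjugations has finite index in $\Out(G_\Gamma)$, so $\Out(G_\Gamma)$ is finite precisely when $\Out^0(G_\Gamma)$ is. I also use freely the standard structure of graph products of cyclic groups: for a set of vertices $\Lambda$ the parabolic $\langle\Lambda\rangle$ is the graph product on the induced subgraph, $\langle\Lambda\rangle\cap\langle\Lambda'\rangle=\langle\Lambda\cap\Lambda'\rangle$, and $C_{G_\Gamma}(v)=\langle\st(v)\rangle$ for each vertex $v$.

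Suppose first that $\Gamma$ contains no SIL. Then the (finitely many) partial conjugations pairwise commute in $\Out(G_\Gamma)$: for two with distinct multipliers this is Lemma~\ref{lem:partial conj not commute}, while those sharing a multiplier $v$ generate an abelian subgroup, being conjugations by powers of $v$ supported on pairwise disjoint subsets of $\Gamma\setminus\st(v)$. Each partial conjugation has finite order, dividing $p(v)$ for its multiplier $v$. Hence $\Out^0(G_\Gamma)$ is a quotient of a finite product of finite cyclic groups, so it is finite, and therefore so is $\Out(G_\Gamma)$.

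Now suppose $(x,y\mids z)$ is a SIL and let $Z$ be the component of $\Gamma\setminus(\link{x}\cap\link{y})$ containing $z$. First, $z\notin\st(x)\cup\st(y)$: were $z$ adjacent to $x$, then since $x\notin\link{x}\cap\link{y}$ the vertices $x$ and $z$ would lie in one component of $\Gamma\setminus(\link{x}\cap\link{y})$, contradicting $x\notin Z$. Let $C$ (resp.\ $D$) be the component of $\Gamma\setminus\st(x)$ (resp.\ $\Gamma\setminus\st(y)$) that contains $z$; as $\st(x)\supseteq\link{x}\cap\link{y}$ we get $C\subseteq Z$, and similarly $D\subseteq Z$, so $x,y\notin C\cup D$. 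A direct computation, using that $\chi^x_C$ and $\chi^y_D$ each fix $x$ and $y$, gives
\[
   [\chi^x_C,\chi^y_D]=\chi^{g}_{C\cap D},\qquad g=[y\m,x\m]=y\m x\m yx.
\]
Since $x,y$ are non-adjacent, $\langle x,y\rangle\cong\Z/p(x)\ast\Z/p(y)$, in which $g$ is non-trivial of infinite order; and since $x,y\notin C\cap D$ we have $(\chi^g_{C\cap D})^n=\chi^{g^n}_{C\cap D}$ for all $n\in\Z$.

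Because $\chi^{g^n}_{C\cap D}(\chi^{g^m}_{C\cap D})^{-1}=\chi^{g^{n-m}}_{C\cap D}$, it now suffices to show that $\chi^{g^n}_{C\cap D}$ is inner only for $n=0$, i.e.\ that $\chi^{g}_{C\cap D}$ has infinite order in $\Out(G_\Gamma)$; this makes $\Out(G_\Gamma)$ infinite. If $\chi^{g^n}_{C\cap D}$ were inner, say conjugation by $s\in G_\Gamma$, then $s$ would centralise every vertex outside $C\cap D$, in particular $x$ and $y$, forcing $s\in\langle\st(x)\rangle\cap\langle\st(y)\rangle=\langle\st(x)\cap\st(y)\rangle=\langle\link{x}\cap\link{y}\rangle$. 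Agreement of conjugation by $s$ with conjugation by $g^n$ on $z\in C\cap D$ gives $g^{-n}s\in C_{G_\Gamma}(z)=\langle\st(z)\rangle$, hence $g^{-n}\in\langle x,y\rangle\cap\langle\st(z)\cup(\link{x}\cap\link{y})\rangle=\langle\emptyset\rangle=\{1\}$, since neither $x$ nor $y$ lies in $\st(z)$ or in $\link{x}\cap\link{y}$; so $n=0$. The main obstacle is this reverse direction: the commutator identity is routine, but extracting usable partial conjugations from an arbitrary SIL requires the bookkeeping above (the observation $z\notin\st(x)\cup\st(y)$ does most of the work), and the non-triviality of the powers in $\Out(G_\Gamma)$ rests on the parabolic structure recalled at the outset. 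An alternative to this last step is to construct a projection homomorphism $\Out^0(G_\Gamma)\to\Out^0\big(\Z/p(x)\ast\Z/p(y)\ast\Z/p(z)\big)$ onto the (visibly infinite) outer automorphism group of the free product on the pairwise non-adjacent vertices $x,y,z$, under which the partial conjugations above hit a generating set of the target.
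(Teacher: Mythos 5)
Your argument is correct, but note that the paper does not prove this statement at all: it is imported verbatim as \cite[Theorem 1.4]{GPR_automorphisms}, so there is no internal proof to compare against. What you have written is a legitimate self-contained proof. The forward direction (no SIL $\Rightarrow$ finite) is exactly the argument one would extract from the paper's own toolkit: Lemma~\ref{lem:partial conj not commute} makes the finitely many finite-order partial conjugations pairwise commute in $\Out^0(G_\Gamma)$, which is of finite index. For the converse, your reduction from the SIL component $Z$ of $\Gamma\setminus(\lk(x)\cap\lk(y))$ to genuine partial conjugations $\chi^x_C,\chi^y_D$ with $z\in C\cap D$ and $x,y\notin C\cup D$ is the right bookkeeping (the observation $z\notin\st(x)\cup\st(y)$, hence $C,D\subseteq Z$, is exactly what is needed), and the computation $[\chi^x_C,\chi^y_D]=\chi^{[y\m,x\m]}_{C\cap D}$ with $[y\m,x\m]$ of infinite order in $\Z/p(x)\ast\Z/p(y)$ checks out; this is precisely the phenomenon the paper alludes to when it remarks that a SIL in a RACG produces an infinite dihedral subgroup of $\Out(W_\Gamma)$. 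Your non-innerness argument does lean on two standard but nontrivial structural facts about graph products --- that centralizers of vertex generators are $\gspan{\st(v)}$ and that parabolics intersect in parabolics --- which you correctly flag as inputs rather than prove; your alternative closing suggestion, projecting onto $\Out^0$ of the free product on $\{x,y,z\}$ via the factor map of Section~\ref{sec:large}, would avoid both and is closer in spirit to how this paper handles the large case, so it is arguably the cleaner route within this paper's framework.
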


In applications for RAAGs, it is the free group generated by partial conjugations that is exploited.
For example, Guirardel and the first-named author use these free groups to show that when the SIL satisfies certain conditions, $\Out(A_\Gamma)$ is large \cite[Proposition 3.1]{GuirardelSale-vastness}.

When we look at RACGs, the non-commuting partial conjugations of a SIL generate the infinite dihedral group, which is virtually $\Z$,
so we cannot use SILs in the same way.
Instead we introduce two variations of a SIL that give us subgroups of $\Out(W_\Gamma)$ that are virtually non-abelian free.

\begin{defn}
    Let $\Gamma$ be a graph with vertices $x_1, x_2, x_3, z$.

\begin{enumerate}
\item We say that $x_1, x_2, x_3, z$ form a STIL (Separating Triple Intersection of Links), denoted $(x_1, x_2, x_3\mids z)$, if:
\begin{itemize}
	\item the full subgraph spanned by $\{x_1, x_2, x_3\}$ contains at most one edge, and
	\item  $\Gamma \setminus\left(\link{x_1}\cap\link{x_2}\cap\link{x_3}\right)$ has a component $C$ which contains $z$ and none of $x_1, x_2, x_3$.
\end{itemize}

\item We say that $x_1, x_2, x_3$ form an FSIL (Flexibly Separating Intersection of Links), if $(x_i, x_j\mids x_k)$ is a SIL for $\{i,j,k\}=\{1,2,3\}$.
\end{enumerate}
\end{defn}

As for SILs, for $Z\subseteq \Gamma$ we will say that $(x_1, x_2, x_3\mids Z)$ is a STIL if $(x_1, x_2, x_3\mids z)$ is a STIL for every vertex $z\in Z$.

We now collect two results that concern STILs and FSILs, particularly when you have overlapping SILs.

\begin{lem}\label{lem:no_overlap}
    Suppose that $\Gamma$ is connected and $(x_1, x_2\mids Z)$ and $(x_1, x_3\mids Z')$ are SILs.
    If $z\in Z\cap Z'$ then $(x_1,x_2,x_3\mids z)$ is a STIL.
    \end{lem}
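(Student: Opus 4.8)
The plan is to unwind the definition of a STIL and reduce everything to statements about the two given SILs. We must check the two bullet points in the definition of STIL for the tuple $(x_1,x_2,x_3\mids z)$: first, that the full subgraph on $\{x_1,x_2,x_3\}$ contains at most one edge; second, that the component of $\Gamma\setminus(\link{x_1}\cap\link{x_2}\cap\link{x_3})$ containing $z$ contains none of $x_1,x_2,x_3$.

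For the edge count: since $(x_1,x_2\mids Z)$ is a SIL, $x_1$ and $x_2$ are non-adjacent; likewise $(x_1,x_3\mids Z')$ being a SIL forces $x_1$ and $x_3$ non-adjacent. Hence among the three possible edges on $\{x_1,x_2,x_3\}$ only $x_2x_3$ can be present, so there is at most one edge. This step is immediate.

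For the separation condition, the key observation is that $\link{x_1}\cap\link{x_2}\cap\link{x_3}$ is contained in both $\link{x_1}\cap\link{x_2}$ and $\link{x_1}\cap\link{x_3}$, so removing the smaller set removes fewer vertices and the components only get coarser (each component of $\Gamma\setminus(\link{x_1}\cap\link{x_2}\cap\link{x_3})$ is a union of the relevant pieces of $\Gamma\setminus(\link{x_1}\cap\link{x_2})$, together with the extra vertices in $(\link{x_1}\cap\link{x_2})\setminus(\link{x_1}\cap\link{x_2}\cap\link{x_3})$). Let $C$ be the component of $\Gamma\setminus(\link{x_1}\cap\link{x_2}\cap\link{x_3})$ containing $z$. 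Since $z\in Z$ and $(x_1,x_2\mids Z)$ is a SIL, the component $Z$ of $\Gamma\setminus(\link{x_1}\cap\link{x_2})$ through $z$ omits $x_1$ and $x_2$, and $Z\subseteq C$; similarly $z\in Z'$ with $Z'$ omitting $x_1$ and $x_3$, and $Z'\subseteq C$. It remains to rule out that $x_1$, $x_2$, or $x_3$ lies in $C$. Here $x_1\notin C$: indeed $x_1$ is adjacent to every vertex of $\link{x_1}$, in particular to every vertex of $\link{x_1}\cap\link{x_2}\cap\link{x_3}$; a path from $z$ to $x_1$ avoiding this set would, upon appending one such removed vertex $v$ (if it exists) or by connectivity within $\Gamma\setminus(\link{x_1}\cap\link{x_2})$ otherwise, contradict $x_1\notin Z$. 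More directly: any vertex of $C$ is either in $Z$ (hence not $x_1,x_2$) or is connected to $Z$ through vertices of $(\link{x_1}\cap\link{x_2})\setminus(\link{x_1}\cap\link{x_2}\cap\link{x_3})$, which are adjacent to both $x_1$ and $x_2$; such vertices are non-adjacent to neither $x_1$ nor $x_2$ only if\ldots here one argues that $x_1$, being adjacent to all of $\link{x_1}\cap\link{x_2}\cap\link{x_3}$, cannot be reached from $Z$ without passing through that common intersection. The cleanest route is: suppose $x_1\in C$; then there is a path from $z$ to $x_1$ in $\Gamma$ avoiding $\link{x_1}\cap\link{x_2}\cap\link{x_3}$. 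The last vertex on this path before $x_1$ lies in $\link{x_1}$; if it also lies in $\link{x_2}\cap\link{x_3}$ we have a contradiction, so it lies in $\link{x_1}\setminus(\link{x_2}\cap\link{x_3})$, i.e.\ it is not adjacent to $x_2$ or not adjacent to $x_3$. Replacing $x_1$ by that vertex still gives a vertex of $C$ adjacent to $x_1$; iterating, and using that $Z$ (resp.\ $Z'$) is a single component avoiding $x_1,x_2$ (resp.\ $x_1,x_3$), one reaches a contradiction with $x_1\notin Z$. Symmetrically $x_2\notin C$ using the SIL $(x_1,x_2\mids Z)$ and $x_3\notin C$ using $(x_1,x_3\mids Z')$.

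The main obstacle is precisely this last bookkeeping: verifying that enlarging the separating set from $\link{x_1}\cap\link{x_2}$ (or $\link{x_1}\cap\link{x_3}$) to the triple intersection $\link{x_1}\cap\link{x_2}\cap\link{x_3}$ cannot merge the good component $Z$ (or $Z'$) with a component containing one of the $x_i$. I expect this to follow from a careful but elementary argument about which newly-available vertices (those in $\link{x_1}\cap\link{x_2}$ but not in $\link{x_3}$, etc.) can serve as bridges, together with the fact that each such bridging vertex is adjacent to at least one $x_i$ and hence cannot itself be $x_j$ for $j\neq i$, nor can a single such vertex connect $Z$ to $x_i$ since that would place $x_i$ in the same $\Gamma\setminus(\link{x_1}\cap\link{x_2})$-component as its neighbour, which already lies in $Z$ — contradicting that $x_1,x_2\notin Z$. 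Writing this out with the indices handled symmetrically is the only real work; everything else is definitional.
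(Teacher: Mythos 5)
Your overall strategy --- showing that no path from $z$ to any of $x_1,x_2,x_3$ can avoid $L=\lk(x_1)\cap\lk(x_2)\cap\lk(x_3)$ --- is exactly the paper's, and your verification of the edge condition is fine. But the heart of the argument, ruling out a path from $z$ to $x_1$, is not actually completed. You correctly observe that the last vertex $w$ before $x_1$ on such a path lies in $\lk(x_1)\setminus L$, but you then appeal to ``replacing $x_1$ by that vertex \ldots iterating \ldots one reaches a contradiction,'' and this iteration is never defined; knowing that $w\in C$ and $w\in\lk(x_1)$ gives you nothing to iterate on, since the vertex before $w$ on the path need not be adjacent to $x_1$. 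What is missing is the simultaneous use of \emph{both} SILs on a single shortest path $p$ from $z$ to $x_1$ avoiding $L$: the SIL $(x_1,x_2\mids Z)$ forces $p$ to meet $\lk(x_1)\cap\lk(x_2)$ at some vertex $v$, and minimality of $p$ forces $v$ to be the penultimate vertex (otherwise shortcut along the edge $vx_1$, which still avoids $L$ since $v\notin L$); likewise $(x_1,x_3\mids Z')$ forces a vertex $v'\in\lk(x_1)\cap\lk(x_3)$ on $p$, which must also be penultimate; and $v\neq v'$ because $p$ avoids $L$. Two distinct vertices cannot both be penultimate, and that is the contradiction. Your sketch never pins down that the penultimate vertex would have to lie in both pairwise intersections at once.

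The second gap is the claim that $x_2\notin C$ and $x_3\notin C$ follow ``symmetrically,'' each from a single SIL. They do not: the roles of $x_1$ and $x_2$ are not symmetric here, and the SIL $(x_1,x_2\mids Z)$ alone cannot rule out $x_2\in C$, because removing the smaller set $L$ instead of $\lk(x_1)\cap\lk(x_2)$ may reconnect $x_2$ to $z$ as far as that one SIL is concerned. The paper handles $x_2$ by a reduction rather than by symmetry: a shortest path $q$ from $z$ to $x_2$ avoiding $L$ has penultimate vertex $v\in\lk(x_1)\cap\lk(x_2)$, and since $v$ is also adjacent to $x_1$, concatenating $q|_{[z,v]}$ with the edge $vx_1$ produces a path from $z$ to $x_1$ avoiding $L$, which the first step has already excluded. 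You need this extra rerouting step (and its analogue for $x_3$) to close the argument.
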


\begin{proof} Suppose that $z\in Z \cap Z'$.

    Let $L=\lk(x_1)\cap\lk(x_2)\cap\lk(x_3)$. To show that $(x_1, x_2,x_3\mids z)$ is a STIL, we show there is no path from $z$ to $\{x_1,x_2,x_3\}$ which avoids $L$.

First, suppose that there there is a path from $z$ to $x_1$ avoiding $L$ and let $p$ be the shortest such path.
Since $(x_1, x_2\mids Z)$ is a SIL, and $z\in Z$,  there must be a vertex $v\in p\cap\lk(x_1)\cap\lk(x_2)$.
Since $p$ is the shortest path avoiding $L$ and $v$ is adjacent to $x_1$, $v$ must be the penultimate vertex on the path.
On the other hand, since $(x_1, x_3\mids Z')$ is a SIL, there must be a vertex $v'\in p\cap\lk(x_1)\cap\lk(x_3)$, and since $p$ avoids $L$ we must have that $v\neq v'$.
But, again, $p$ is the shortest path avoiding $L$ and $v'$ is adjacent to $x_1$, and thus $v'$ must be the penultimate vertex of $p$, a contradiction.
Thus, there is no path joining $z$ to $x_1$ avoiding $L$.

Suppose now that there was a path joining $z$ to $x_2$ avoiding $L$. Let $q$ be the shortest such path.
Then since $(x_1, x_2\mids Z)$ is a SIL, there is a vertex $v\in q\cap \lk(x_1)\cap\lk(x_2)$.
As above, we must have that $v$ is the penultimate vertex of $q$.
Let $e$ be the edge from $v$ to $x_1$, and let $q|_{[z,v]}$ be the subpath of $q$ from $z$ to $v$.
Then concatenating $q|_{[z,v]}$ with the edge $e$ gives a path from $z$ to $x_1$ that avoids $L$.
By the above argument, no such a path exists, giving a contradiction.
Similarly, there is no path from $z$ to $x_3$ avoiding $L$.
\end{proof}

\begin{lem}\label{lem:STILfind} Let $x_1,x_2,x_3\in \Gamma$ be such that there exist
    $y$ and $z$ with $(x_1, x_2\mids y)$ and $(x_1, x_3\mids z)$ both
    SILs. Then one of the following occurs:
    \begin{enumerate}
    	\item $\{x_1,x_2,x_3\}$ is an FSIL;
        \item there exists $w\in \Gamma$ so that $(x_1,x_2,x_3\mids w)$ is a
        STIL;
        \item $x_1, x_2, y$ are all in the same
        component of $\left(\Gamma\setminus\st(x_3)\right)\cup\{x_2\}$,
        and $x_1,x_3,z$ are all in the same component of $\left(\Gamma\setminus
        \st(x_2)\right)\cup\{x_3\}$;\label{item:y not 3 and z  not 2}
    \end{enumerate}\end{lem}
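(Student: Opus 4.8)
The plan is to assume that neither conclusion (1) nor conclusion (2) holds and to deduce (3). Write $L_{12}=\link{x_1}\cap\link{x_2}$ and $L_{13}=\link{x_1}\cap\link{x_3}$, let $Y$ be the component of $\Gamma\setminus L_{12}$ containing $y$, and let $Z$ be the component of $\Gamma\setminus L_{13}$ containing $z$; by hypothesis $x_1,x_2\notin Y$ and $x_1,x_3\notin Z$. I would first dispose of the case that $\Gamma$ is disconnected: since two vertices in distinct components of $\Gamma$ have no common neighbour, the SIL hypotheses then either exhibit a component of $\Gamma$ meeting none of $x_1,x_2,x_3$ (which at once gives a STIL, the only possible edge among $x_1,x_2,x_3$ being one between $x_2$ and $x_3$) or put each of $x_1,x_2,x_3$ in its own component in such a way that $\{x_1,x_2,x_3\}$ is an FSIL; when all the relevant vertices lie in a single component I would simply restrict to it. So I may assume $\Gamma$ is connected.

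The engine of the argument is Lemma~\ref{lem:no_overlap}: whenever two of the separating sets that arise have components that overlap, we get a STIL. Concretely, if $Y\cap Z\neq\varnothing$, I would pick $w$ in the intersection; then $(x_1,x_2\mids Y)$ and $(x_1,x_3\mids Z)$ are SILs with $w\in Y\cap Z$, so Lemma~\ref{lem:no_overlap} makes $(x_1,x_2,x_3\mids w)$ a STIL and (2) holds. Hence I may assume $Y\cap Z=\varnothing$, and the same principle recurs in disguise throughout: for example, if $\st(x_3)$ separated $x_1$ from $y$ in $\Gamma$, the component of $\Gamma\setminus\st(x_3)$ containing $y$ would lie inside $\Gamma\setminus L_{13}$ (as $L_{13}\subseteq\link{x_3}\subseteq\st(x_3)$), so it could not also contain $z$ without forcing $y\in Y\cap Z$. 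I would then split on whether $x_2$ and $x_3$ are adjacent. If they are not, then $x_1,x_2,x_3$ are pairwise non-adjacent, so $\{x_1,x_2,x_3\}$ is an FSIL exactly when $(x_1,x_2\mids x_3)$, $(x_1,x_3\mids x_2)$ and $(x_2,x_3\mids x_1)$ are all SILs; here I would examine whether $L_{12}$ separates $x_3$ from $\{x_1,x_2\}$ and whether $L_{13}$ separates $x_2$ from $\{x_1,x_3\}$, and when these separations hold I expect to assemble the missing SIL $(x_2,x_3\mids x_1)$ out of the ``walls'' $L_{12}$ and $L_{13}$ by a shortest-path argument in the style of the proof of Lemma~\ref{lem:no_overlap} (locating penultimate vertices on hypothetical offending paths), landing in case (1).

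When one of those separations fails — or when $x_2$ and $x_3$ are adjacent, so that an FSIL is impossible because $(x_2,x_3\mids x_1)$ cannot be a SIL — the very failure is the statement that the appropriate members of $\{x_1,x_2,y\}$ lie in one component of $(\Gamma\setminus\st(x_3))\cup\{x_2\}$, and of $\{x_1,x_3,z\}$ in one component of $(\Gamma\setminus\st(x_2))\cup\{x_3\}$; combined with $Y\cap Z=\varnothing$ and the ``recurring mechanism'' above, this should force the two connectivity assertions of (3). I expect the main obstacle to be organizational rather than conceptual: there are several sub-positions of $x_2,x_3,y,z$ relative to the three links to run through, and in each one must either locate overlapping components (route to (2)), or perform a shortest-path surgery manufacturing a new SIL or STIL (route to (1) or (2)), or else certify the connectivity demanded by (3); the most delicate points will be the low-complexity exceptions created by a possible edge between $x_2$ and $x_3$, which is exactly why the two components in conclusion (3) are enlarged by adjoining $x_2$ and $x_3$ respectively.
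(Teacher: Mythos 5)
Your overall skeleton---reduce via Lemma~\ref{lem:no_overlap} when the two separating components overlap, then case on whether $\lk(x_1)\cap\lk(x_2)$ separates $x_3$ and whether $\lk(x_1)\cap\lk(x_3)$ separates $x_2$---matches the paper's, and your treatment of the two symmetric subcases (both separations hold, giving the FSIL by a shortest-path argument; neither holds) is on the right track. But there is a genuine gap in the asymmetric case, where exactly one of $(x_1,x_2\mids x_3)$, $(x_1,x_3\mids x_2)$ is a SIL, say the first is and the second is not. You file this under ``one of those separations fails'' and claim the failure yields conclusion (3). It cannot: the lemma must in particular hold when the given $y$ equals $x_3$, and then conclusion (3) is unavailable because $x_3\notin(\Gamma\setminus\st(x_3))\cup\{x_2\}$. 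What is actually true here---and what the paper proves---is that conclusion (2) holds: since $(x_1,x_2\mids x_3)$ is a SIL one gets $\lk(x_1)\cap\lk(x_3)\subseteq\lk(x_1)\cap\lk(x_2)$, so the triple intersection of links equals $\lk(x_1)\cap\lk(x_3)$; a putative path from $z$ to $\{x_1,x_2,x_3\}$ avoiding it must terminate at $x_2$, and concatenating it with a path from $x_2$ to $\{x_1,x_3\}$ avoiding $\lk(x_1)\cap\lk(x_3)$ (which exists precisely because $(x_1,x_3\mids x_2)$ is not a SIL) contradicts the SIL $(x_1,x_3\mids z)$. Your proposal contains no argument producing this STIL, and without it the case is unaccounted for.

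A second, related weakness: even when neither separation holds, the failure of ``$\lk(x_1)\cap\lk(x_2)$ separates $x_3$ from $\{x_1,x_2\}$'' is a statement about components of $\Gamma\setminus(\lk(x_1)\cap\lk(x_2))$, not about components of $(\Gamma\setminus\st(x_3))\cup\{x_2\}$, so it is not ``the very statement'' of the connectivity in (3). To reach (3) you need the intermediate steps the paper supplies: first show, by concatenating a hypothetical path from $y$ to $x_3$ with a path from $x_3$ to $\{x_1,x_2\}$ (the latter exists because $(x_1,x_2\mids x_3)$ is not a SIL) and contradicting the SIL $(x_1,x_2\mids y)$, that every path from $y$ to $\{x_1,x_2,x_3\}$ passes through $\lk(x_1)\cap\lk(x_2)$; then invoke the standing assumption that $(x_1,x_2,x_3\mids y)$ is not a STIL to obtain a path from $y$ reaching $\lk(x_1)\cap\lk(x_2)$ while avoiding $\lk(x_3)$, which is what places $x_1,x_2,y$ in one component of $(\Gamma\setminus\st(x_3))\cup\{x_2\}$. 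Your appeal to a ``recurring mechanism'' does not substitute for these two steps, and as written the deduction of (3) does not go through.
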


\begin{proof}
    Note first that by Lemma~\ref{lem:no_overlap}, if $y=z$ then $(x_1, x_2,x_3\mids y)$ is a STIL. We thus suppose that $y\neq
    z$. The remainder of the proof breaks into several cases.

    \case{1} If $y=x_3$ and $z=x_2$, then $\{x_1,x_2,x_3\}$ is an FSIL.
    
    To prove this, we just need to verify that $(x_2,x_3\mids x_1)$ is a SIL. Indeed, since $(x_1, x_2\mids x_3)$ is a SIL, we must have that $x_2$ and $x_3$ are non adjacent. Suppose that $(x_2, x_3 \mids x_1)$ is not a SIL, then there is a path from $x_1$ to, say, $x_2$ that bypasses $\lk(x_2)\cap\lk(x_3)$.
    Let $p$ be such a path of minimal length.
    Since $(x_1,x_3\mids x_2)$ is a SIL, $p$ must contain a vertex from $\lk(x_1)\cap\lk(x_3)$.
    If $v$ is the first vertex of $p$ in $\lk(x_1)\cap\lk(x_3)$, then we have a length two path, from $x_1$ to $v$ to $x_3$.
    Since $(x_1,x_2\mids x_3)$ is a SIL, $v\in \lk(x_1)\cap\lk(x_2)$, and thus $v\in \lk(x_2)\cap\lk(x_3)$, a contradiction. Thus, $(x_2, x_3\mids x_1)$ is a SIL.

   \case{2} If $y=x_3$ and $(x_1,x_3 \mids x_2)$ is not a SIL, then $(x_1,x_2,x_3\mids z)$ is a STIL.
   
    Since $(x_1,x_2\mids x_3)$ is a SIL, we know that the subgraph spanned by $\{x_1, x_2, x_3\}$ contains no edges and $\lk(x_1)\cap\lk(x_3) \subseteq \lk(x_1)\cap\lk(x_2)$,
    otherwise we can find a path from $x_3$ to $x_1$ avoiding $\lk(x_1)\cap\lk(x_2)$.
    Suppose there is a path from $z$ to $\{ x_1,x_2,x_3\}$ that bypasses $\lk(x_1)\cap\lk(x_2)\cap\lk(x_3)$, and let $q$ be such a path. 
    Since $\lk(x_1)\cap\lk(x_2)\cap\lk(x_3)=\lk(x_1)\cap\lk(x_3)$, and $(x_1,x_3\mids z)$ is a SIL, the terminus of $q$ must be $x_2$, and $q$ avoids $\lk(x_1)\cap\lk(x_3)$. 
    We then form a path $q'$ by concatenating $q$ with a path from $x_2$ to $\{x_1,x_3\}$ avoiding $\lk(x_1)\cap \lk(x_3)$, using the fact that $(x_1,x_3\mids x_2)$ is not a SIL. This is illustrated in Figure~\ref{fig:STILs}(\textsc{a}).
    This contradicts the fact that $(x_1,x_3 \mids z)$ is a SIL, and we thus conclude that $(x_1,x_2,x_3\mids z)$ is a STIL.

    \case{3} When neither $(x_1,x_2\mids x_3)$ nor $(x_1,x_3\mids x_2)$ are SILs.
    
    First note that the subgraph spanned by $\{x_1, x_2, x_3\}$ can contain at most one edge, between $x_2$ and $x_3$.
    Second, we claim that $x_2,x_3$ are in the same connected component of $\Gamma$ (and possibly also $x_1$).
    To see this, observe that there must be a path either from $x_2$ to $x_3$ or
    a path from $x_2$ to $x_1$ since $(x_1, x_3\mids x_2)$ is not a SIL. Further, there must also either be a
    path either from $x_3$ to $x_2$ or $x_3$ to $x_1$ since $(x_1,
    x_2\mids x_3)$ is not a SIL. In any of the cases above, there is
    a path from $x_2$ to $x_3$.
    
	Suppose now that $x_2, x_3$ are in a different connected component of $\Gamma$ to $x_1$. 
	Then $\lk(x_1)\cap\lk(x_i)=\emptyset$ for $i=2,3$, implying that $y$ is in a different connected component to $x_2, x_3$, as well as to $x_1$. Hence $(x_1,x_2,x_3\mids y)$ will form a STIL.  A similar argument applies to $z$.
    We can thus assume all of the vertices are in a single connected component of $\Gamma$.

    Next, we claim there can be no path from $y$ to $x_3$ avoiding
    $\lk(x_1)\cap\lk(x_2)$. Suppose that there is such a path $q$.
    Since $(x_1, x_2\mids x_3)$ is not a SIL, there is
    a path $q'$ from $x_3$ to $\{x_1, x_2\}$ avoiding
    $\lk(x_1)\cap\lk(x_2)$. The concatenation of $q$ and $q'$
    gives a path from $y$ to $\{x_1, x_2\}$ avoiding
    $\lk(x_1)\cap\lk(x_2)$, which is impossible since $(x_1, x_2\mids y)$ is a SIL. 
    This is illustrated in Figure~\ref{fig:STILs}(\textsc{b}). 

\begin{figure}
\centering \subfloat[The path $q$ can be extended to connect $z$ to
$\{x_1, x_3\}$.]{\label{subfig:STIL1}\begin{tikzpicture}[scale=0.6]
\tikzstyle{every node}=[shape=circle, color=black, fill=black]

\node[label=above:$x_1$] (1) at (4,6) {}; \node[label=below:$x_2$]
(2) at (4,0) {}; \node[label=below left:$x_3$] (3) at (0,3) {};
\node (11) at (2,4.5) {}; \node (12) at (6,4.5) {}; \node(131) at
(3,3.5) {}; \node(123) at (4,3) {}; \node (121) at (5, 2.5) {};
\node (21) at (2,1.5) {}; \node (22) at (6,1.5) {}; \node[label=
below right:$z$] (z) at (9,3) {};

\draw (1)--(11); \draw (1)--(12); \draw (1)--(131); \draw
(1)--(123); \draw (1)--(121); \draw (2)--(21); \draw (2)--(22);
\draw(2)--(121); \draw(2)--(123); \draw (2)--(131); \draw(3)--(131);
\draw(3)--(123); \draw[dashed](z) to[bend left=20] node[midway,
below, draw=none, fill=none]{$q$} (2);

\draw (4,3.75) ellipse (3cm and 1.75cm); \node[fill=none, draw=none]
at (6.75, 5.25){$\text{lk}(x_1)$}; \draw (4, 2.25) ellipse (3cm and
1.75cm);\node[fill=none, draw=none] at (1.25,
0.75){$\text{lk}(x_2)$};
\end{tikzpicture}}
\subfloat[The concatenation of $q$ and $q'$ gives a path from $y$ to
$x_1$]{\label{subfig:STIL2}\begin{tikzpicture}[scale=0.6]
\tikzstyle{every node}=[shape=circle, color=black, fill=black]
\node[label=above:$x_1$] (1) at (4,6) {}; \node[label=right:$x_2$]
(2) at (4,0) {}; \node[label=below left:$x_3$] (3) at (0,3)
{};\node[label= below right:$y$] (y) at (9,1.5) {};\node (11) at
(2.25,4.25) {}; \node (12) at (5.75,4.25) {}; \node(131) at (3,3.5)
{}; \node(123) at (4,3) {}; \node (21) at (2.25,1.75) {}; \node (22)
at (5.75,1.75) {};\node(31) at (1, 2.5) {}; \draw (1)--(11); \draw
(1)--(12); \draw (1)--(131); \draw (1)--(123);\draw (2)--(21); \draw
(2)--(22); \draw(2)--(123); \draw (2)--(131); \draw(3)--(131);
\draw(3)--(123); \draw(2)--(131); \draw(3)--(31); \draw[dashed](y)
to[bend left=75] node[midway, below, draw=none, fill=none]{$q$} (3);
\draw[dashed](3) to [bend left=30] node[midway, above, draw=none,
fill=none]{$q'$} (1);

\draw (4,3.75) ellipse (2.5cm and 1.75cm); \node[fill=none,
draw=none] at (6.75, 5.25){$\text{lk}(x_1)$}; \draw (4, 2.25)
ellipse (2.5cm and 1.75cm);\node[fill=none, draw=none] at (6.75,
0.75){$\text{lk}(x_2)$};

\end{tikzpicture}
} \caption{Figures illustrating Lemma~\ref{lem:STILfind}.}\label{fig:STILs}
\end{figure}
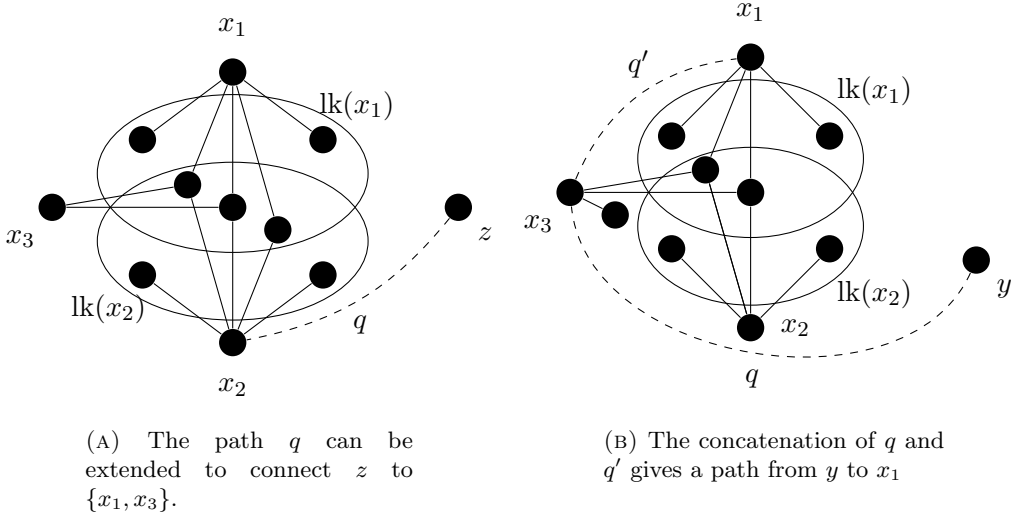

By the above, and the fact that $(x_1,x_2\mids y)$ is a SIL,
any path from $y$ to $\{x_1,x_2,x_3\}$ must pass through $\lk(x_1)\cap\lk(x_2)$. Suppose now that $(x_1, x_2, x_3\mids y)$ is not a STIL.
Then, there is a path from $y$ to $\lk(x_1)\cap\lk(x_2)$ which avoids $\lk(x_3)$, implying that $x_1,x_2,y$ are in the same connected component of $(\Gamma \setminus \st(x_3)) \cup \{x_2\}$.

Similarly, if $(x_1,x_2,x_3 \mids z)$ is not a STIL then 
$x_1,x_3,z$ are in the same connected component of $(\Gamma \setminus \st(x_2)) \cup \{x_3\}$.
\end{proof}

\section{The virtually abelian case}\label{sec:v ab}

We now focus on working with a RACG $W_\Gamma$.
The aim of this section is to prove the following.

\begin{thm}\label{thm:virtually abelian}
    Suppose $\Gamma$ has no STIL or FSIL. 
    
    Then $\Out(W_\Gamma)$ is virtually abelian.
\end{thm}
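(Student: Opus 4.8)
The plan is to pass to the finite-index subgroup $\Out^0(W_\Gamma)$ and prove that its commutator subgroup is abelian. This suffices: $\Out^0(W_\Gamma)$ is generated by the finite set of partial conjugations, each of which is an involution, so its abelianization is a finitely generated group of exponent $2$, hence a finite elementary abelian $2$-group. Thus once $[\Out^0(W_\Gamma),\Out^0(W_\Gamma)]$ is shown to be abelian it has finite index in $\Out^0(W_\Gamma)$, so $\Out^0(W_\Gamma)$ — and therefore $\Out(W_\Gamma)$, which contains it with finite index — is virtually abelian.

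To analyze the commutator subgroup I would first record the relations among partial conjugations. For a fixed multiplier $v$, the partial conjugations $\chi^v_C$, as $C$ ranges over the components of $\Gamma\setminus\st(v)$, pairwise commute and satisfy $\prod_C \chi^v_C = \Inn_v$, so their classes generate a finite abelian $2$-group $A_v\le \Out^0(W_\Gamma)$. By Lemma~\ref{lem:partial conj not commute}, two partial conjugations with distinct multipliers fail to commute only when there is a SIL relating them. The goal is then to show that in the absence of a STIL and an FSIL every commutator $[\chi^x_C,\chi^y_D]$ of partial conjugations satisfies: (i) it is normalized by every partial conjugation — it is inverted by $\chi^x_C$ and by $\chi^y_D$ (both involutions), and any third partial conjugation $\chi^z_E$ either commutes with both $\chi^x_C$ and $\chi^y_D$, hence with the commutator, or is attached to the SIL $(x,y\mid z)$ through a second SIL sharing a multiplier, a situation governed by Lemmas~\ref{lem:no_overlap} and~\ref{lem:STILfind}; and (ii) it commutes with every other commutator $[\chi^{x'}_{C'},\chi^{y'}_{D'}]$. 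Granting (i) and (ii), the subgroup generated by these commutators is normal, abelian, and contains every commutator of generators, hence equals $[\Out^0(W_\Gamma),\Out^0(W_\Gamma)]$; a finite generating set is then read off from the finite list of SILs of $\Gamma$.

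The verification of (i) and (ii) I would carry out by cases according to how the multiplier pairs $\{x,y\}$ and $\{x',y'\}$ overlap. When they are disjoint, the two commutators are ``supported'' on disjoint parts of the automorphism group and commute essentially formally, using only Lemma~\ref{lem:partial conj not commute} and the structure of the $A_v$. The delicate case is when two SILs share a multiplier, say $(x,y\mid\cdot)$ and $(x,y'\mid\cdot)$: here Lemma~\ref{lem:STILfind} forces us into its alternative~(3), meaning the separating sets $\lk(x)\cap\lk(y)$ and $\lk(x)\cap\lk(y')$ are genuinely interleaved rather than nested, and one must squeeze from this enough control over the components of $\Gamma\setminus\st(x)$, $\Gamma\setminus\st(y)$ and $\Gamma\setminus\st(y')$ (and over how $\chi^{y'}_{D'}$ conjugates $\chi^x_C$ and $\chi^y_D$) to conclude that $\big\langle \chi^x_C,\chi^y_D,\chi^{y'}_{D'}\big\rangle$, together with the $\Inn_v$-relations, is still virtually abelian.

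The step I expect to be the main obstacle is exactly this last point: ruling out, using only the non-existence of STILs and FSILs, every configuration of partial conjugations whose generated subgroup would be (virtually) a nonabelian free product — the smallest offender being something like $(\Z/2)^2 * \Z/2$ — which amounts to a careful combinatorial study, via Lemmas~\ref{lem:no_overlap} and~\ref{lem:STILfind}, of overlapping SILs and of the way components of stars sit inside components of intersections of links. Everything else — the reduction in the first paragraph, the intra-multiplier relations, the inversion identities for commutators of involutions, and the bookkeeping assembling an explicit finite generating set for $[\Out^0(W_\Gamma),\Out^0(W_\Gamma)]$ — should be routine.
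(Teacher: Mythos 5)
Your overall strategy is exactly the paper's: pass to $\Out^0(W_\Gamma)$, observe that its abelianization is finite because it is generated by finitely many involutions, and reduce to showing that the derived subgroup is generated by commutators of partial conjugations and that these pairwise commute (this is the content of Lemmas~\ref{lem:gen set derived subgroup} and~\ref{lem:derived subgroup abelian}, proved by the same case analysis on overlapping SILs via Lemmas~\ref{lem:no_overlap} and~\ref{lem:STILfind} that you describe). Two points in your plan need repair. First, your claim~(i) that each commutator $[\chi^x_C,\chi^y_D]$ is individually normalized by every partial conjugation is too strong: case~(ii) of Lemma~\ref{lem:conjugating commutators with two vertices involved} shows that when the two factors act on each other's multipliers, conjugating by a suitable partial conjugation sharing a multiplier turns a single commutator into a product of several \emph{distinct} commutators, not a power of the original. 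The correct (and sufficient) statement, which is what the paper proves, is that every such conjugate is a product of commutators of partial conjugations, so that the subgroup they generate is normal; your generating-set bookkeeping must accommodate this. Second, you never address disconnected $\Gamma$. All of the SIL-overlap machinery you invoke (Lemmas~\ref{lem:no_overlap}, \ref{lem:conjugating commutators with three vertices involved}, \ref{lem:derived subgroup abelian}) is stated and proved only for connected $\Gamma$; the paper treats the disconnected case separately in Proposition~\ref{prop:disconnected_v_abelian}, showing that the absence of a STIL and FSIL forces $\Gamma$ to have exactly two components, each spanning a virtually abelian RACG, whence $\Out^0(W_\Gamma)\cong W_{\Gamma_1}/Z(W_{\Gamma_1})\times W_{\Gamma_2}/Z(W_{\Gamma_2})$ is virtually abelian. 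You should either add this case explicitly or check that your overlap analysis survives without the connectivity hypothesis.
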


\subsection{Outline of the proof}

We split the proof up into two cases, one where $\Gamma$ is connected, and one where it is not.
Being disconnected and lacking any STIL or FSIL heavily restricts the structure of $\Gamma$, and hence of $W_\Gamma$ and $\Out(W_\Gamma)$.
Proposition~\ref{prop:disconnected_v_abelian} tells us in this situation that $\Out^0(W_\Gamma)$ is not just virtually abelian, but also a right-angled Coxeter group itself. The latter statement is implicit in \cite{Muhlherr:presentation, Tits:Coxter}, but we provide an independent proof here.

Having dealt with the case where $\Gamma$ is disconnected, we look to the case where the defining graph is connected.
The main tool in proving Theorem~\ref{thm:virtually abelian} then is the following assertion concerning the derived subgroup of $\Out^0(W_\Gamma)$.

\begin{prop}\label{prop:devived subgroup abelian}
    Suppose $\Gamma$ is connected and has no STIL or FSIL. 
    
    Then the derived subgroup $\Out^0(\cox\Gamma)'$ is abelian.
\end{prop}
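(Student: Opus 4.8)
The plan is to show that, after passing to the commutator subgroup, the only relevant generators are partial conjugations $\chi^v_C$ that pairwise commute. By Lemma~\ref{lem:partial conj not commute}, two partial conjugations $\chi^x_C$ and $\chi^y_D$ fail to commute exactly when there is a SIL $(x,y\mids z)$ together with one of four containment patterns. So the strategy is: (i) understand the structure of $\Out^0(W_\Gamma)$ well enough to reduce to a generating set of partial conjugations; (ii) use the absence of STILs and FSILs, via Lemmas~\ref{lem:no_overlap} and~\ref{lem:STILfind}, to control how SILs through a fixed pair $\{x,y\}$ can overlap; and (iii) check that although individual partial conjugations may not commute, their classes in the abelianization do, i.e.\ any commutator $[\chi^x_C,\chi^y_D]$ can be rewritten (modulo the derived subgroup, or using that the $\chi^v_\bullet$ are involutions) as a product lying in the derived subgroup itself, forcing $\Out^0(W_\Gamma)'$ to be abelian.

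The first concrete step is to fix the multiplier $x$ and analyze the partial conjugations $\chi^x_C$ as $C$ ranges over components of $\Gamma\setminus\st(x)$: these all commute with one another (same multiplier, disjoint supports), so non-commutation only occurs between partial conjugations with distinct multipliers $x\neq y$. Given such a non-commuting pair, Lemma~\ref{lem:partial conj not commute} hands us a SIL $(x,y\mids z)$. I would then invoke Lemma~\ref{lem:STILfind} with the two SILs $(x,y\mids z)$ and — if another non-commuting partial conjugation $\chi^x_E$ (or $\chi^z_\bullet$) enters a would-be non-trivial commutator relation — a second SIL sharing the multiplier $x$; the absence of STILs and FSILs kills options (1) and (2) of that lemma, leaving only the rigid configuration (3). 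This rigidity should be exactly what is needed to show that the relations among the non-commuting $\chi^x_C,\chi^y_D$ are "dihedral-like" in a way that collapses in the derived subgroup: for RACGs each $\chi^v_C$ has order $2$, so $\langle \chi^x_C,\chi^y_D\rangle$ is dihedral, its derived subgroup is cyclic generated by $(\chi^x_C\chi^y_D)^2$, and the no-STIL/no-FSIL hypothesis should force these cyclic pieces, across all pairs, to lie in a common abelian subgroup.

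Concretely, I would aim to produce an explicit finite generating set for $\Out^0(W_\Gamma)'$ consisting of elements of the form $(\chi^x_C\chi^y_D)^2$ (one for each non-commuting pair), and then verify pairwise commutation of these generators by a case analysis on how the underlying SILs interact — two such "dihedral squares" either involve disjoint data (hence obviously commute) or share a vertex, in which case Lemmas~\ref{lem:no_overlap} and~\ref{lem:STILfind} constrain the overlap enough to check commutation directly. A subtlety to handle along the way is that $\Out^0$ is a quotient of $\Aut^0$ by inner automorphisms, so one must be careful that a partial conjugation which is inner (when $\st(v)$ is non-separating, or $C$ is the whole complement) is trivial in $\Out$, and that relations holding in $\Aut^0$ descend correctly; I expect the inner relations to only help, by further collapsing the group.

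The main obstacle I anticipate is step (iii): ruling out a "triangle" of three mutually non-commuting partial conjugations whose three dihedral-square generators generate a nonabelian free or surface-like group inside $\Out^0(W_\Gamma)'$. This is precisely the scenario that an FSIL or a STIL would create, so the heart of the argument is a careful combinatorial verification that, absent those configurations, no such triangle exists — equivalently, that the "non-commutation graph" on the relevant partial conjugations is, up to the dihedral redundancy, a disjoint union of pieces each of which contributes only a single $\Z$ to the abelianization's kernel. Getting the bookkeeping right on which components $C$, $D$ of the various $\Gamma\setminus\st(\cdot)$ can simultaneously satisfy the four bullet conditions of Lemma~\ref{lem:partial conj not commute} — and translating "$\Gamma$ has no STIL or FSIL" into a usable statement at that level of granularity — is where the real work lies.
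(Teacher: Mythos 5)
Your outline follows the same two-step strategy as the paper: generate $\Out^0(W_\Gamma)'$ by the commutators $[\chi^x_C,\chi^y_D]$ (which, since the generators are involutions, are exactly your ``dihedral squares'' $(\chi^x_C\chi^y_D)^2$), and then show these generators pairwise commute. But there is a genuine gap at the first step that your plan does not identify. For a group generated by a set $X$, the derived subgroup is generated by all \emph{conjugates} of commutators of elements of $X$, not by the commutators alone; to conclude that the commutators of partial conjugations suffice, you must show that the subgroup they generate is closed under conjugation by each partial conjugation. This is where the paper does most of its work: it proves that $\chi_3[\chi_1,\chi_2]\chi_3$ is always a product of commutators of partial conjugations, by a case analysis on whether the three multipliers involved number two or three. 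The two-multiplier case contains a genuinely non-obvious subcase in which the conjugate is not $[\chi_1,\chi_2]^{\pm1}$ but a product of $r-1$ different commutators (arising from the relation $\theta_1=\theta_2\cdots\theta_r$ among partial conjugations sharing a multiplier in $\Out^0$), and the three-multiplier case is precisely where the no-STIL/no-FSIL hypothesis enters, via Lemma~\ref{lem:STILfind}(3), to pin down the configuration of supports. Your proposal treats the generating set as given (``one for each non-commuting pair'') and moves straight to pairwise commutation, so this closure argument is absent rather than merely deferred.

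Two smaller points. First, your anticipated obstacle --- a ``triangle'' of mutually non-commuting partial conjugations generating something free --- is the right thing to worry about for the second step, and the paper resolves it exactly as you suggest, by a case analysis (in Lemma~\ref{lem:derived subgroup abelian}) on how many distinct multipliers occur among the four partial conjugations, using the conjugation lemmas again. Second, your instinct that ``the inner relations only help'' is correct but should be made precise: the relation $\theta_1\cdots\theta_r=1$ in $\Out^0$ is not merely helpful, it is the mechanism that makes the problematic conjugation subcase work out, so it cannot be waved away. As written, the proposal is a correct high-level plan coinciding with the paper's, but the load-bearing computations --- and one necessary step that is not flagged as needing proof --- are missing.
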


Since $\Out^0(W_\Gamma)$ is finitely generated by finite order elements, it has finite abelianization, and so Theorem~\ref{thm:virtually abelian} follows from Proposition~\ref{prop:devived subgroup abelian}, when $\Gamma$ is connected.

The proof of Proposition~\ref{prop:devived subgroup abelian} has two steps.
First Lemma~\ref{lem:gen set derived subgroup} asserts that the derived subgroup is generated by commutators of partial conjugations.
In particular it is finitely generated.
The second step is to show that each of these commutators pairwise commute.
This is the subject of Lemma~\ref{lem:derived subgroup abelian}.

\subsection{Disconnected graphs}

We first wish to reduce to the case where $\Gamma$ is connected. So, as a special case, we deal with the case when $\Gamma$ is disconnected.

\begin{prop}\label{prop:disconnected_v_abelian}
  Suppose $\Gamma$ is disconnected and has no STIL or FSIL. 
  
  Then $\Out^0(W_\Gamma)$ is a virtually abelian right-angled Coxeter group.
\end{prop}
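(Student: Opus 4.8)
The plan is to analyze the structure of a disconnected graph $\Gamma$ with no STIL or FSIL, and show that $\Out^0(W_\Gamma)$ decomposes in a way forced to be a virtually abelian RACG. First I would record the basic principle governing partial conjugations in a disconnected graph: if $v$ is a vertex in one component $\Lambda_1$ of $\Gamma$, then $\Gamma\setminus\st(v)$ contains every other component entirely, so the partial conjugations with multiplier $v$ are $\chi^v_C$ for $C$ a connected component of $\Lambda_1\setminus\st_{\Lambda_1}(v)$ together with $\chi^v_{C'}$ for $C'$ a whole component $\Lambda_j$, $j\neq 1$. The key observation is that the latter "global" partial conjugations are exactly the ones that interact across components, and a STIL/FSIL obstruction will severely limit how many components can simultaneously admit such nontrivial interaction.

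Next I would use Lemmas~\ref{lem:no_overlap} and~\ref{lem:STILfind} (in a degenerate form, since in a disconnected graph many paths simply don't exist) to show that the absence of a STIL or FSIL forces $\Gamma$ to have a very restricted shape: essentially at most one component can be "large enough" to contribute partial conjugations that fail to commute with those of other components, and within each component the only partial conjugations available are themselves trivial or generate an abelian (indeed Coxeter) group. Concretely, if two distinct vertices $x,y$ lie in different components and each has a separating star within its own component, or if three components each contribute a multiplier, one can produce SILs $(x,y\mid z)$ with $z$ ranging over whole components, and overlapping such SILs yield a STIL or an FSIL by the two lemmas — contradiction. So the "cross-component" partial conjugations form a commuting family, and one is left with the intra-component partial conjugations plus a controlled set of global ones.

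I would then assemble $\Out^0(W_\Gamma)$ from these pieces. The generators are partial conjugations, which are involutions; Lemma~\ref{lem:partial conj not commute} tells us precisely which pairs fail to commute, and the structural analysis above shows that the only non-commuting pairs are those coming from SILs confined to a single "special" component, where — since there is no STIL or FSIL — the non-commuting partial conjugations of any SIL $(x,y\mid z)$ generate an infinite dihedral group, and distinct such dihedral pieces commute with one another. A presentation on these involutive generators with commuting relations for all non-adjacent pairs, plus the order-2 relations, is exactly a right-angled Coxeter presentation (this matches the assertion that the statement is implicit in \cite{Muhlherr:presentation, Tits:Coxter}); one checks no further relations are needed by comparing with the known presentations of $\Aut(W_\Gamma)$. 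Being a RACG on a graph whose complement has no "large" configuration makes it virtually abelian — indeed a RACG is virtually abelian iff its defining graph, after removing cone vertices, is a disjoint union of cliques, which the structural restrictions guarantee.

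The main obstacle I expect is the bookkeeping in the structural step: carefully enumerating which partial conjugations are non-inner (using the criterion that $\chi^v_C$ is non-inner exactly when $\st(v)$ separates and $C\neq\Gamma\setminus\st(v)$), and verifying that no STIL or FSIL forces the defining graph of $\Out^0(W_\Gamma)$ into the virtually-abelian RACG regime without missing an edge or a generator. A secondary subtlety is making sure the commuting relations among the generating involutions are complete — i.e., that $\Out^0(W_\Gamma)$ is genuinely the RACG on this graph and not a proper quotient — which is where leaning on the established presentations of $\Aut(W_\Gamma)$ from \cite{Muhlherr:presentation, Tits:Coxter} does the real work.
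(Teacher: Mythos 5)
Your proposal circles the right objects (partial conjugations whose support is a whole component) but misses the two concrete structural facts that make the paper's argument work, and one of your intermediate claims is false. First, the reduction: any three vertices lying in three distinct components automatically form an FSIL (their pairwise link-intersections are empty, so every $(x_i,x_j\mids x_k)$ is a SIL), which forces $\Gamma$ to have exactly two components $\Gamma_1,\Gamma_2$; and any triple inside one component spanning at most one edge, together with a vertex of the other component, forms a STIL. Hence every triple in each $\Gamma_i$ spans at least two edges, so $\Gamma_i\setminus\st(x)$ has at most one vertex for every $x\in\Gamma_i$, $\Out^0(W_{\Gamma_i})=1$, and the only outer partial conjugation with multiplier $x$ is $\chi^x_{\Gamma_j}$. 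You never derive this, and your picture of a ``single special component'' carrying all the non-commutation is wrong: both components contribute symmetrically (e.g.\ $\Gamma_1$ and $\Gamma_2$ each a non-adjacent pair gives $\Out^0(W_\Gamma)\cong D_\infty\times D_\infty$). The non-commuting pairs are exactly $\chi^x_{\Gamma_j},\chi^y_{\Gamma_j}$ for $x,y$ non-adjacent in the \emph{same} $\Gamma_i$, while cross-component multipliers always commute.

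Second, for the identification as a RACG you defer the completeness of the relations to the presentations of M\"uhlherr and Tits, which the paper deliberately avoids: it instead builds a surjection $\iota\colon W_{\Gamma_1}\times W_{\Gamma_2}\to\Out^0(W_\Gamma)$ sending each vertex to its unique partial conjugation, checks it is a homomorphism via Lemma~\ref{lem:partial conj not commute}, and computes $\ker\iota=Z(W_{\Gamma_1})\times Z(W_{\Gamma_2})$ directly, giving $\Out^0(W_\Gamma)\cong W_{\Gamma_1}/Z(W_{\Gamma_1})\times W_{\Gamma_2}/Z(W_{\Gamma_2})$. Your route could in principle be completed, but as written the hard step is outsourced rather than proved. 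Finally, your closing criterion is incorrect: a RACG whose defining graph (after removing cone vertices) is a disjoint union of cliques is virtually \emph{free}, and is virtually abelian only in degenerate cases; the relevant criterion here is that each $\Gamma_i$ is a join of single vertices and non-adjacent pairs, so that $W_{\Gamma_i}$ is a direct product of copies of $\Z_2$ and $D_\infty$, whence the quotients by the centers, and their product, are virtually abelian.
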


\begin{proof}
Suppose that $\Gamma$ is disconnected.
If there are at least three connected components of $\Gamma$, then picking a triple of vertices, each from distinct components, will produce an FSIL.
Thus, $\Gamma$ has two components, $\Gamma_1$ and $\Gamma_2$.
If one component contains three vertices so that at most one pair is adjacent,
then we can produce a STIL by taking this triple with a fourth vertex from the other component. Thus, for every triple of vertices in $\Gamma_i$, we must have at least two edges.
However, this implies that $\cox{\Gamma_i}$ is virtually abelian for $i=1,2$.

    For each $x\in \Gamma_i$, this means that $\Gamma_i\setminus\st(x_i)$ contains at most one vertex.
    Hence, $\Out^0(\cox{\Gamma_i})=1$.
    Let $j\neq i$, then for each $x\in \Gamma_i$, there is a partial conjugation $\chi^x_{\Gamma_j}\in\Out^0(\cox\Gamma)$.
    This will be the only partial conjugation in $\Out(W_\Gamma)$ with multiplier $x$ (and it may be inner).
    This allows us to identify each vertex of $\Gamma$ with a unique partial conjugation (though some may be inner),
    and the set of all partial conjugations in this identification generates $\Out^0(W_\Gamma)$.
    We use this identification and extend it to products of vertices to define a surjective map
    $$\iota\colon \cox{\Gamma_1}\times \cox{\Gamma_2}\to \Out^0(\cox\Gamma).$$
    We first check the relations are preserved so we see $\iota$ is a homomorphism.

    If $x,y\in\Gamma_i$, then $(x,y\mids \Gamma_j)$ is a SIL if and only $x$ and $y$ are not joined by an edge,
    and thus by Lemma~\ref{lem:partial conj not commute}, $\chi^x_{\Gamma_j}$ and $\chi^y_{\Gamma_j}$ commute if and only if $x$ and $y$ are adjacent.
    If $x\in \Gamma_1$ and $z\in\Gamma_2$, then $\chi^x_{\Gamma_2}$ commutes with $\chi^z_{\Gamma_1}$ by Lemma~\ref{lem:partial conj not commute}, since $x$ and $z$ cannot form a SIL (since $\Gamma$ has only two connected components).
    Thus $\iota$ is a homomorphism.

    We now determine the kernel of $\iota$.
    Take $w_1\in \cox{\Gamma_1}$ and $w_2\in \cox{\Gamma_2}$ with $\iota((w_1,w_2))=1$.
    We abuse notation slightly and consider the partial conjugation $\chi^{w_i}_{\Gamma_i}$, conjugating every vertex in $\Gamma_i$ by $w_i$.
    In $\Out^0(\cox\Gamma)$, we have  $\chi^{w_i}_{\Gamma_j} \chi^{w_i}_{\Gamma_i} = 1$,
    so
    $$1 = \iota((w_1, w_2))=\chi^{w_1}_{\Gamma_2}\chi^{w_2}_{\Gamma_1}=(\chi^{w_1}_{\Gamma_1})\m (\chi^{w_2}_{\Gamma_2})\m=\left(\chi^{w_2}_{\Gamma_2}\chi^{w_1}_{\Gamma_1}\right)\m.$$
    This will be trivial exactly when $w_1\in Z(\cox{\Gamma_1})$ and $w_2\in Z(\cox{\Gamma_2})$.
    Thus, $\ker(\iota)=Z(\cox{\Gamma_1})\times Z(\cox{\Gamma_2})$, and so
    $$\Out^0(\cox\Gamma)\cong \cox{\Gamma_1}/Z(\cox{\Gamma_1})\times \cox{\Gamma_2}/Z(\cox{\Gamma_2}).$$

    Furthermore, $Z(\cox{\Gamma_i})$ is the subgroup of $\cox{\Gamma_i}$ generated by those vertices which are adjacent to every vertex of $\Gamma_i$, and $\cox{\Gamma_i}/Z(\cox{\Gamma_i})$ is isomorphic to the right-angled Coxeter group with defining graph $\Gamma_i\setminus K_i$, where $K_i$ is the clique of vertices generating $Z(\cox{\Gamma_i})$. Thus, $\Out^0(\cox\Gamma)$ is a right-angled Coxeter group whose defining graph is the join of $\Gamma_1\setminus K_1$ and $\Gamma_2\setminus K_2$.
    
    Finally, since $W_{\Gamma_1}$ and $W_{\Gamma_2}$ are virtually abelian, $\Out^0(W_\Gamma)$ is virtually abelian too.
\end{proof}

\subsection{Generating the derived subgroup}

Given a group with generating set $X$, the derived subgroup is generated by all conjugates of commutators of elements in $X$.
By studying the behaviour of conjugates of commutators of partial conjugations, we show the following.

\begin{lem}\label{lem:gen set derived subgroup}
    Suppose $\Gamma$ is connected and has no STIL or FSIL. 
    
    Then $\Out^0(\cox\Gamma)'$ is generated by commutators of partial conjugations.
\end{lem}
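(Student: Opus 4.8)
The plan is to show that the subgroup $N\leq\Out^0(W_\Gamma)$ generated by the set of all commutators $[\alpha,\beta]$ of partial conjugations is normal. Granting this, $N$ equals the normal closure of that set, which, since partial conjugations generate $\Out^0(W_\Gamma)$, is precisely the derived subgroup $\Out^0(W_\Gamma)'$; that is the assertion of the lemma. As $\Out^0(W_\Gamma)$ is generated by partial conjugations, to show $N$ is normal it suffices to check that $\psi\,c\,\psi\m\in N$ for every partial conjugation $\psi=\chi^a_B$ and every $c=[\chi^u_C,\chi^v_D]$ with $\chi^u_C$ and $\chi^v_D$ partial conjugations.

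I would begin with two reductions. Because the generators of $W_\Gamma$ are involutions, every partial conjugation squares to the identity in $\Out^0(W_\Gamma)$, so $\psi\m=\psi$ and $\psi c\psi\m=[\psi\chi^u_C\psi,\ \psi\chi^v_D\psi]$; this is the one point at which the argument genuinely uses that $W_\Gamma$ is a Coxeter group rather than a general graph product. If $c=1$ there is nothing to prove, so by Lemma~\ref{lem:partial conj not commute} we may assume that $(u,v\mids z)$ is a SIL. If $\psi$ commutes with both $\chi^u_C$ and $\chi^v_D$, then $\psi\chi^u_C\psi=\chi^u_C$ and $\psi\chi^v_D\psi=\chi^v_D$, so $\psi c\psi=c\in N$. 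Otherwise $\psi$ fails to commute with one of the two, say with $\chi^u_C$ (after possibly replacing $c$ with $c\m$, which is harmless as $N$ is closed under inverses).

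In this main case, Lemma~\ref{lem:partial conj not commute} produces a SIL $(a,u\mids y)$. Now $(u,v\mids z)$ and $(u,a\mids y)$ are SILs sharing the vertex $u$, so Lemma~\ref{lem:STILfind} applies to the triple $\{u,v,a\}$. Because $\Gamma$ contains no STIL and no FSIL, outcomes (1) and (2) of that lemma are impossible (as is the degenerate case $y=z$, by Lemma~\ref{lem:no_overlap}), so outcome (3) holds: its connectivity constraints tie $a$ to $u$ and $v$. I would then use these constraints — together with the analogous analysis when $\psi$ also fails to commute with $\chi^v_D$, which yields a further SIL $(a,v\mids\cdot)$ and a second application of Lemma~\ref{lem:STILfind} — to identify $\psi\chi^u_C\psi$ and $\psi\chi^v_D\psi$ explicitly. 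A direct computation expresses the conjugate of a partial conjugation by $\psi$ as a product of partial conjugations whose multipliers lie in $\{u,a\}$ (resp.\ $\{v,a\}$) and whose supports are read off from $B$ and $C$ (resp.\ $D$), and outcome (3) cuts down which such products can occur to a short list. Finally I would expand $[\psi\chi^u_C\psi,\ \psi\chi^v_D\psi]$ using the commutator identities $[gh,k]=g[h,k]g\m[g,k]$ and $[g,hk]=[g,h]\,h[g,k]h\m$ to distribute the bracket over these products, and verify, once more via Lemma~\ref{lem:partial conj not commute}, that each resulting factor is either trivial or a commutator of two partial conjugations, so that the product lies in $N$.

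The step I expect to be the main obstacle is precisely this last one: carrying out the bookkeeping of how one partial conjugation conjugates another, and then reorganizing the resulting word as a product of commutators of partial conjugations, \emph{without} yet knowing that $N$ is normal or abelian — abelianness of $N$ is only obtained afterwards, in Lemma~\ref{lem:derived subgroup abelian}. What keeps this finite is the no-STIL/no-FSIL hypothesis: through Lemmas~\ref{lem:no_overlap} and~\ref{lem:STILfind} it sharply limits how the SILs $(u,v\mids z)$, $(a,u\mids y)$ and (when it occurs) $(a,v\mids\cdot)$ can overlap, reducing the verification to a bounded number of configurations of the vertices $a,u,v$ and the components $B,C,D$.
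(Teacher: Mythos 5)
Your overall strategy is the one the paper uses: show that the subgroup $N$ generated by all commutators of partial conjugations is normal by checking that $\psi c\psi\m\in N$ for each partial conjugation $\psi$ and each commutator $c$ of partial conjugations, the point being that the no-STIL/no-FSIL hypothesis (via Lemmas~\ref{lem:no_overlap} and~\ref{lem:STILfind}) cuts the possible SIL configurations down to a manageable list. The paper packages exactly this verification as Lemmas~\ref{lem:conjugating commutators with two vertices involved} and~\ref{lem:conjugating commutators with three vertices involved}, and your reduction ``$N$ normal and containing all commutators of generators $\Rightarrow N=\Out^0(\cox\Gamma)'$'' is correct.

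There is, however, a concrete gap in your case division. Having assumed $\psi=\chi^a_B$ fails to commute with $\chi^u_C$, you extract a SIL $(a,u\mids y)$ and feed the triple $\{u,v,a\}$ to Lemma~\ref{lem:STILfind}; this presupposes $a\notin\{u,v\}$. The case where $\psi$ shares its multiplier with one of the two factors of the commutator is not degenerate and is not reachable by this route --- there is no genuine triple of vertices --- yet it is precisely the hardest case in the paper (Lemma~\ref{lem:conjugating commutators with two vertices involved}) and the only one in which the conjugate of $[\chi^u_C,\chi^v_D]$ is not simply $[\chi^u_C,\chi^v_D]^{\pm1}$ but a longer product of commutators. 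Handling it requires the relation that the product of all partial conjugations with a fixed multiplier is inner, which appears nowhere in your outline; note also that this case needs no hypothesis on $\Gamma$, so the STIL/FSIL constraints you lean on will not make it disappear. Separately, the computational core is only sketched, and two of its asserted ingredients are shakier than you suggest: the conjugate $\psi\chi^u_C\psi$ is in general a partial conjugation whose multiplier is a word such as $ua$ rather than a product of vertex partial conjugations with supports ``read off from $B$ and $C$'', and the identity $[gh,k]=g[h,k]g\m[g,k]$ reintroduces conjugates of commutators, so without an extra argument (e.g., first establishing, as the paper does, that these conjugates equal $[\,\cdot\,,\cdot\,]^{\pm1}$ in the surviving configurations) the expansion is circular. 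The paper avoids both issues by computing $\chi_3[\chi_1,\chi_2]\chi_3$ directly on vertices in each configuration.
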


To prove this, we show that given three partial conjugations $\chi_1,\chi_2,\chi_3$, the element given by $\chi_1[\chi_2,\chi_3]\chi_1$ is a product of commutators of partial conjugations.
In fact, in most cases the conjugate $\chi_1[\chi_2,\chi_3]\chi_1$ is equal to $[\chi_2,\chi_3]$ or its inverse. The situation is divided into two cases, according to how many unique multipliers are involved in the three partial conjugations.
Note that, necessarily, $\chi_2$ and $\chi_3$ must have distinct multipliers, and their multipliers must form a SIL, otherwise the result is trivial.

We begin with the case when two partial conjugations share the same multiplier,
since the technical details of this situation are needed in the case when all multipliers are distinct.

\begin{lem}\label{lem:conjugating commutators with two vertices involved}
   Let $x_1,x_2$ be distinct vertices of $\Gamma$, and let $\chi=\chi^{x_1}_C$ be a partial conjugation, and
    $\theta_1=\chi^{x_2}_{C_1},\ldots,\theta_r = \chi^{x_2}_{C_r}$ be the complete list of partial conjugations with multiplier $x_2$ (where each $C_i$ is a connected component of $\Gamma\setminus\st(x_2)$).
    Order these so that $x_1\in C_1$.
\begin{enumerate}[(i)]
	\item  \label{item:conj of com is com inv} 
	If either
	\begin{enumerate}
		\item $i=j$,
		\item $i>j=1$, $(x_1,x_2\mids C_i)$ is a SIL, and either $C_i=C$ or $x_2\in C$,
		\item $i=1<j$, $C_j=C$, and $(x_1,x_2\mids C)$ is a SIL,
	\end{enumerate}
then $$\theta_j [\chi,\theta_i]\theta_j = [\theta_i,\chi] = [\chi,\theta_i]\m.$$
	\item \label{item:conj of com is complicated}
	If $i=1<j$, $(x_1,x_2 \mids C_j)$ is a SIL, and $x_2\in C$, then
	$$\theta_j [\chi,\theta_i]\theta_j =  [\chi, \theta_2]\ldots[\chi, \theta_{j-1}][\theta_j,\chi][\chi,\theta_{j+1}]\ldots [\chi,\theta_r].$$
	\item  \label{item:conj of com is com} Otherwise $\theta_j [\chi,\theta_i]\theta_j = [\chi,\theta_i]$.
\end{enumerate}
In particular $\theta_j [\chi,\theta_i]\theta_j$ is a product of commutators of partial conjugations, for any  choice of $i,j$.
\end{lem}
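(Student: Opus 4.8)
The plan is to exploit three basic facts. Every partial conjugation is an involution in $\Out^0(\cox\Gamma)$, since each vertex of $\Gamma$ is an involution in $\cox\Gamma$; in particular $\theta_j\m=\theta_j$ and $[\chi,\theta_i]\m=[\theta_i,\chi]$. Two partial conjugations with the \emph{same} multiplier always commute, because Lemma~\ref{lem:partial conj not commute} produces non‑commuting pairs only when the multipliers are distinct. And $\theta_1\theta_2\cdots\theta_r$ is the inner automorphism ``conjugate everything outside $\st(x_2)$ by $x_2$'', hence trivial in $\Out^0(\cox\Gamma)$, which gives the relation $\theta_1=\theta_2\cdots\theta_r$. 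Since $\theta_i$ and $\theta_j$ commute and $\theta_j$ is an involution, $\theta_j\theta_i\theta_j=\theta_i$, so the conjugate of the commutator is the commutator of the conjugates:
\[\theta_j[\chi,\theta_i]\theta_j=[\,\theta_j\chi\theta_j,\ \theta_i\,].\]
Thus the whole lemma reduces to identifying the conjugate $\theta_j\chi\theta_j$ and then simplifying a single commutator with $\theta_i$.

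Two situations are immediate. If $i=j$, the involution identities alone give $\theta_j[\chi,\theta_j]\theta_j=[\theta_j,\chi]=[\chi,\theta_j]\m$, which is case (i)(a). If $\chi$ and $\theta_j$ commute, then $\theta_j\chi\theta_j=\chi$ and we are in case (iii). So the real content is when $i\neq j$ and $\chi$ does not commute with $\theta_j$; then Lemma~\ref{lem:partial conj not commute} hands us a SIL $(x_1,x_2\mids z)$, and since $x_1\in C_1$ we have $x_1\in C_j$ precisely when $j=1$, so the four alternatives of that lemma reduce to ``$j=1$'' or ``$j>1$ and ($C=C_j$ or $x_2\in C$)'', with the positions of $z$ and of $x_2$ relative to $C$ and $C_j$ recorded in each alternative.

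The heart of the proof is then a direct computation of $\theta_j\chi\theta_j$ on the vertex set, keeping track of how $\theta_j$ moves the support $C$ and, when $j=1$, how it replaces the multiplier $x_1$ by the (still involutory) element $x_2x_1x_2$ on the part $C\setminus C_1$. In each alternative one shows that, modulo inner automorphisms, $\theta_j\chi\theta_j$ is again built from the partial conjugations $\chi^{x_1}_{C'}$ and the $\theta_\ell$: the new multiplier $x_2x_1x_2$ is rewritten using $\theta_1=\theta_2\cdots\theta_r$, and it is exactly this rewriting that manufactures the long product in case (ii); in the remaining cases $\theta_j\chi\theta_j$ agrees with $\chi$ on the portion of its support that $\theta_i$ interacts with, so $[\theta_j\chi\theta_j,\theta_i]=[\chi,\theta_i]^{\pm1}$, the sign and the precise case ((i)(b), (i)(c) or (iii)) being dictated by which of $i,j$ equals $1$ and by which SIL conditions from Lemma~\ref{lem:partial conj not commute} hold among the three multipliers $x_1,x_2,x_2$. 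Throughout one distributes $\theta_i$ across products using that $\theta_i$ commutes with every $\theta_\ell$ together with the identity $[ab,c]=a[b,c]a\m[a,c]$. Once (i)--(iii) are established, the concluding ``in particular'' is automatic: the answer is $[\chi,\theta_i]$ in case (iii), it is $[\theta_i,\chi]=[\chi,\theta_i]\m$ in case (i) — still a commutator of partial conjugations, since $[a,b]\m=[b,a]$ — and it is a visible product of the commutators $[\chi,\theta_\ell]$ and $[\theta_j,\chi]$ in case (ii).

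The main obstacle is the combinatorial bookkeeping in computing $\theta_j\chi\theta_j$: the component $C$ of $\Gamma\setminus\st(x_1)$ and the components $C_1,\dots,C_r$ of $\Gamma\setminus\st(x_2)$ can overlap intricately, and one must control carefully where $x_2$ and the SIL vertex $z$ sit relative to them, and which component of $\Gamma\setminus(\link{x_1}\cap\link{x_2})$ is in play. Organising these possibilities so that they line up exactly with the hypothesis lists of (i)(a)--(c) and (ii), and verifying the rewriting via $\theta_1=\theta_2\cdots\theta_r$ in the $j=1$ case, is where essentially all the effort goes; the rest is the involution‑and‑commutator algebra sketched above.
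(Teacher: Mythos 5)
Your overall strategy coincides with the paper's: dispose of $i=j$ and of the commuting cases first, use Lemma~\ref{lem:partial conj not commute} to restrict to genuine SIL configurations, compute the surviving commutators explicitly as partial conjugations with multiplier $[x_2,x_1]^{\pm1}$, and use the relation $\theta_1=\theta_2\cdots\theta_r$ to manufacture the long product in case (ii). Your repackaging $\theta_j[\chi,\theta_i]\theta_j=[\theta_j\chi\theta_j,\theta_i]$ (valid because $\theta_i$ and $\theta_j$ commute and are involutions) is a harmless reordering of the same computation. But two substantive pieces are announced rather than carried out, and they are exactly where the content of the lemma lives. First, the verifications themselves: you assert that in the non-(ii) cases $\theta_j\chi\theta_j$ ``agrees with $\chi$ on the portion of its support that $\theta_i$ interacts with, so $[\theta_j\chi\theta_j,\theta_i]=[\chi,\theta_i]^{\pm1}$,'' but agreement would force the exponent $+1$; the $-1$ of case (i)(b)--(c) arises precisely because either the multiplier is twisted to $x_2x_1x_2$ (when $j=1$) or the support of $\chi^{[x_2,x_1]}_C$ is conjugated by $x_2$ (when $C_j=C$). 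Since the lemma's statement is exactly the assignment of signs to hypotheses, a proof must actually perform these vertex-by-vertex computations, as the paper does in its Cases 1--4.

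Second, and more seriously, you analyze only the conditions under which $\chi$ fails to commute with $\theta_j$, whereas the case division is governed by the \emph{joint} satisfiability of the SIL conditions for $[\chi,\theta_i]\neq 1$ and for $[\chi,\theta_j]\neq 1$. The paper shows that of the sixteen combinations only four are compatible, each forcing $i=1$ or $j=1$; equivalently, $\theta_j[\chi,\theta_i]\theta_j=[\chi,\theta_i]$ whenever $i\neq j$ and $i,j>1$. This fact is needed twice: once to see that (i)(b), (i)(c) and (ii) exhaust the nontrivial possibilities, and once, critically, in case (ii) to push $\theta_j$ through the product $[\chi,\theta_2]\cdots[\chi,\theta_r]$, where you must commute $\theta_j$ past each factor $[\chi,\theta_k]$ with $1<k\neq j$. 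Knowing that $\theta_j$ commutes with every $\theta_\ell$ (which is what you invoke) does not give this, since $\theta_j$ need not commute with $\chi$. Supplying the pairwise-incompatibility argument and the explicit computations would complete your proof along the same lines as the paper's.
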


\begin{rem}
    Lemma~\ref{lem:conjugating commutators with two vertices involved} implies that
    $\theta_j [\chi,\theta_i]\theta_j = [\chi,\theta_i]^{\pm 1}$ in all cases except when the elements in the commutator act on each other's multiplier. Notably, there are no assumptions on $\Gamma$ here.
\end{rem}

\begin{proof}[Proof of Lemma~\ref{lem:conjugating commutators with two vertices involved}]
	Let $C_i$ denote the support of $\theta_i$.
	
	First observe that if $i=j$ then we get $\theta_j [\chi,\theta_i]\theta_j=[\theta_i,\chi]$ as in \eqref{item:conj of com is com inv}(a). Thus we assume $i\neq j$.
	
	The equality $\theta_j [\chi,\theta_i]\theta_j = [\chi,\theta_i]$ in \eqref{item:conj of com is com} is obvious whenever either $[\chi, \theta_i]=1$ or $[\theta_j,\chi]=1$. 
	By Lemma~\ref{lem:partial conj not commute},
	to avoid this, we require a SIL $(x_1,x_2 \mids Z)$ with either
	\begin{enumerate}[(I)]
		\item $Z=C=C_i$,
		\item $x_1\in C_i$ (so $i=1$) and $Z=C$,
		\item $Z=C_i$ and $x_2\in C$,
		\item $x_1\in C_i$ (so $i=1$) and $x_2\in C$;
	\end{enumerate}
and either
	\begin{enumerate}[(A)]
		\item $Z=C=C_j$,
		\item $x_1\in C_j$ (so $j=1$) and $Z=C$,
		\item $Z=C_j$ and $x_2\in C$,
		\item $x_1\in C_j$ (so $j=1$) and $x_2\in C$.
	\end{enumerate}
Most combinations are not possible. For example (II) and (C) is not possible since together they imply $x_2\in C_j$, which is not possible since $C_j$ is a connected component of $\Gamma\setminus \st(x_2)$.
Or (IV) and (B) is not possible since it implies $x_1\in C_i\cap C_j$, which is not possible since $C_i\cap C_j = \emptyset$.

There are only four possible combinations: (I) and (B), (II) and (A), (III) and (D), (IV) and (C).
Note that each of these imply either $i=1$ or $j=1$. So we have:
	\begin{equation}\label{eq:i neq j commuting}
	\theta_j[\chi,\theta_i]\theta_j = [\chi,\theta_i] \textrm{  if $i\neq j$ and $i,j>1$.}
	\end{equation}

\case{1} (I) and (B).

Here we have $C=C_i$, with $(x_1,x_2\mids C_i)$ forming a SIL, and $j=1$.
Since $\chi$ and $\theta_i$ both have support equal to $C$, and $x_1,x_2\notin C$, 
we get $[\chi , \theta_i] = \chi^{[x_2,x_1]}_C$,
meaning that any $v\in C$ is sent to $[x_2,x_1]v[x_1,x_2]$ and all other vertices of $\Gamma$ are fixed.
For $u\in C_1$ and $v\in C$ we have
\begin{align*}
\theta_1 [\chi ,\theta_i] \theta_1 (u) & = \theta_1\chi^{[x_2,x_1]}_C (x_2 u x_2)  = \theta_1 (x_2 u x_2) = u \\
\theta_1 [\chi ,\theta_i] \theta_1 (v) & = \theta_1\chi^{[x_2,x_1]}_C(v) = \theta_1 ( x_2x_1x_2x_1 v x_1x_2x_1x_2 ) = [x_1,x_2] v [x_2,x_1]
\end{align*}
with all other vertices fixed.
This verifies that $\theta_1 [\chi,\theta_i]\theta_1 = \chi^{[x_1,x_2]}_C = [\chi,\theta_i]\m$, proving half of \eqref{item:conj of com is com inv}(b) in the statement of the Lemma.

\case{2} (II) and (A).

We claim that $[\chi,\theta_1] = \chi^{[x_2,x_1]}_C$.
Note that, since $(x_1,x_2\mids C)$ is a SIL, we get that $C\cap C_1 = \emptyset$.
We have $x_1\in C_1$ and $x_2$ fixed by both $\chi$ and $\theta_1$.
Then for $u\in C$ and $v\in C_1$ we have
\begin{align*}
\chi\theta_1\chi\theta_1(u) & = \chi\theta_1\chi(u)  = \chi\theta_1(x_1ux_1)  = \chi(x_2x_1x_2ux_2x_1x_2)  = [x_2,x_1]u[x_1,x_2]\\
\chi\theta_1\chi\theta_1(v) & = \chi\theta_1\chi(x_2vx_2)  =\chi\theta_1(x_2vx_2)  =\chi(v) =v,
\end{align*}
and all other vertices are fixed. This proves the claim.

Then $\theta_j[\chi,\theta_1]\theta_j = \chi^{x_2}_{C}\chi^{[x_2,x_1]}_C\chi^{x_2}_C$. Since neither $x_1$ nor $x_2$ is in $C$, this implies that $\theta_j[\chi,\theta_1]\theta_j = \chi^{[x_1,x_2]}_C = [\chi,\theta_1]\m$, giving \eqref{item:conj of com is com inv}(c).

\case{3} (III) and (D).

Here we have $(x_1,x_2\mids C_i)$ forming a SIL, so $i>1$, along with $x_2\in C$ and $j=1$.
First, the commutator $[\chi,\theta_i]$ is equal to $\chi^{[x_2,x_1]}_{C_i}$: for $u\in C_i$ and $v\in C$ we have
\begin{align*}
\chi\theta_i\chi\theta_i(u) & = \chi\theta_i\chi(x_2ux_2)  = \chi\theta_i(x_1x_2x_1ux_1x_2x_1)  = \chi(x_1x_2x_1x_2ux_2x_1x_2x_1)  = [x_2,x_1]u[x_1,x_2]\\
\chi\theta_i\chi\theta_i(v) & = \chi\theta_i\chi(v)  =\chi\theta_i(x_1vx_1)  =\chi(x_1vx_1) = v,
\end{align*}
while all other vertices are fixed.

For $u\in C_i$ and $w\in C_1$ we have
\begin{align*}
\theta_1 [\chi ,\theta_i] \theta_1 (u) & = \theta_1\chi^{[x_2,x_1]}_{C_i} ( u )  = \theta_1 (x_2x_1x_2x_1 u x_1x_2x_1x_2) = [x_1,x_2]u[x_2,x_1] \\
\theta_1 [\chi ,\theta_i] \theta_1 (w) & = \theta_1\chi^{[x_2,x_1]}_{C_i}(x_2wx_2) = \theta_1 ( x_2wx_2 ) = w,
\end{align*}
and fixing all other vertices. This tells us that $\theta_1[\chi,\theta_i]\theta_1 = \chi^{[x_1,x_2]}_{C_i} = [\chi,\theta_i]\m$, and proves the second half of \eqref{item:conj of com is com inv}(b)

\case{4} (IV) and (C).

In this case we have $i=1$, $(x_1,x_2\mids C_j)$ forming a SIL, and $x_2\in C$.
Since $\theta_1=\theta_2\cdots\theta_r$ in $\Out(W_\Gamma)$, we get
\begin{align*}
[\chi ,\theta_1]
& = [\chi , \theta_2\cdots \theta_r] \\
& = \chi\theta_2\cdots \theta_r \chi \theta_r \cdots \theta_2 \\
& = \chi \theta_2 \cdots \theta_{r-1}\chi [\chi,\theta_r] \theta_{r-1} \cdots \theta_2 \\
& = [\chi , \theta_2\cdots\theta_{r-1}][\chi,\theta_r].
\end{align*}
The last line follows from the preceding since $[\chi,\theta_r]$ commutes with each $\theta_k$ for $1<k<r$ by equation~\eqref{eq:i neq j commuting}.
A simple induction then gives
\begin{equation} \label{eq:rewriting a commutator} 
[\chi,\theta_1] = [\chi,\theta_2]\cdots[\chi,\theta_r].
\end{equation}
Thus, when conjugating by $\theta_j$ for $j>1$, we can apply equation~\eqref{eq:i neq j commuting} and obtain \eqref{item:conj of com is complicated}, since each commutator $[\chi, \theta_k]$ commutes with $\theta_j$, for $1<k$, except when $k=j$, in which case we get $\theta_j[\chi,\theta_j]\theta_j = [\theta_j,\chi]$.
\end{proof}

From the proof of Lemma~\ref{lem:conjugating commutators with two vertices involved}, specifically equation~\eqref{eq:rewriting a commutator}, we get the following.

\begin{lem}\label{lem:rewriting commutator}
	Let $\chi_1 = \chi^{x_1}_{C_1}$ and $\chi_2 = \chi^{x_2}_{C_2}$ be partial conjugations with distinct multipliers.
	Let $\chi_{2,1} = \chi_2, \chi_{2,2},\ldots, \chi_{2,r}$ be a complete list of partial conjugations with multiplier $x_2$.
	
	If $x_1 \in C_2$ then
	$$[\chi_1,\chi_2] = [\chi_1, \chi_{2,2}]\cdots[\chi,\chi_{2,r}].$$
\end{lem}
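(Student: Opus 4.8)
The plan is to derive Lemma~\ref{lem:rewriting commutator} directly from the computation already carried out in the proof of Lemma~\ref{lem:conjugating commutators with two vertices involved}, specifically from equation~\eqref{eq:rewriting a commutator}. Observe that the hypothesis ``$x_1\in C_2$'' says precisely that the multiplier of $\chi_1$ lies in the support of $\chi_2$; since $C_2$ is one of the connected components of $\Gamma\setminus\st(x_2)$, it must be the component $C_1$ in the notation of Lemma~\ref{lem:conjugating commutators with two vertices involved} (the one containing $x_1$). So, setting $\chi = \chi_1 = \chi^{x_1}_{C_1}$ in that lemma's notation, and $\theta_i = \chi_{2,i}$ for $i=1,\ldots,r$, we have $\chi_2 = \theta_1 = \theta_2\cdots\theta_r$ in $\Out(W_\Gamma)$ (the standard relation among partial conjugations sharing a multiplier, which is used freely in Case~4 of the earlier proof).

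First I would recall the short computation from Case~4: using $\theta_1 = \theta_2\cdots\theta_r$ we expand $[\chi,\theta_1] = \chi\theta_2\cdots\theta_r\chi\theta_r\cdots\theta_2$, peel off $\theta_r$ to produce $[\chi,\theta_r]$, and use equation~\eqref{eq:i neq j commuting} (which says $\theta_k[\chi,\theta_j]\theta_k = [\chi,\theta_j]$ whenever $j\neq k$ and $j,k>1$) to slide $[\chi,\theta_r]$ past the remaining $\theta_k$'s. Iterating gives equation~\eqref{eq:rewriting a commutator}, namely $[\chi,\theta_1] = [\chi,\theta_2]\cdots[\chi,\theta_r]$. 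Translating back to the notation of the present lemma, this is exactly $[\chi_1,\chi_2] = [\chi_1,\chi_{2,2}]\cdots[\chi_1,\chi_{2,r}]$, which is the claim. (The statement in the excerpt writes $[\chi,\chi_{2,r}]$ in the last factor, evidently a typo for $[\chi_1,\chi_{2,r}]$.)

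There is essentially no obstacle here: the lemma is a repackaging of a self-contained sub-argument of the previous proof, isolated for convenient reference later. The only point requiring a sentence of care is the identification of $C_2$ with the component containing $x_1$ — i.e. noting that ``$x_1\in C_2$'' forces $C_2$ to be the distinguished component $C_1$ in the earlier enumeration — and the observation that equation~\eqref{eq:rewriting a commutator} was proved with no hypothesis on $\Gamma$ (as the accompanying remark emphasizes), so it applies verbatim. Thus the proof is: invoke equation~\eqref{eq:rewriting a commutator} with the stated dictionary between the two sets of notation, and conclude.

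\begin{proof}[Proof of Lemma~\ref{lem:rewriting commutator}]
Since $x_1\in C_2$ and $C_2$ is a connected component of $\Gamma\setminus\st(x_2)$, the component $C_2$ is precisely the one containing $x_1$. Apply Lemma~\ref{lem:conjugating commutators with two vertices involved} with $\chi = \chi_1 = \chi^{x_1}_{C_1}$ and with $\theta_1 = \chi_{2,1} = \chi_2,\ \theta_2 = \chi_{2,2},\ \ldots,\ \theta_r = \chi_{2,r}$ the complete list of partial conjugations with multiplier $x_2$, ordered so that $\theta_1$ has support $C_2$ (the component containing $x_1$). Equation~\eqref{eq:rewriting a commutator} from the proof of that lemma states, under no hypothesis on $\Gamma$, that
\[
[\chi,\theta_1] = [\chi,\theta_2]\cdots[\chi,\theta_r].
\]
Rewriting in the present notation, this is exactly
\[
[\chi_1,\chi_2] = [\chi_1,\chi_{2,2}]\cdots[\chi_1,\chi_{2,r}],
\]
as claimed.
\end{proof}
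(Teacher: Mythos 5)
Your proof is correct and takes essentially the same route as the paper: the authors introduce Lemma~\ref{lem:rewriting commutator} with the words ``From the proof of Lemma~\ref{lem:conjugating commutators with two vertices involved}, specifically equation~\eqref{eq:rewriting a commutator}, we get the following,'' which is precisely your argument. The one point of substance---that the hypothesis $x_1\in C_2$ forces $C_2$ to be the distinguished component of $\Gamma\setminus\st(x_2)$ containing $x_1$, so that $\chi_2$ plays the role of $\theta_1$ in the earlier notation and equation~\eqref{eq:rewriting a commutator} applies verbatim---is handled correctly in your write-up.
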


We then use this in dealing with the case with three distinct multipliers.

\begin{lem}\label{lem:conjugating commutators with three vertices involved}
    Suppose $\Gamma$ is connected and has no STIL or FSIL.
    Let $\chi_i=\chi^{x_i}_{C_i}$, for $i=1,2,3$ be partial conjugations in $\Out(W_\Gamma)$ with distinct multipliers.
    
    Then  $$\chi_3 [\chi_1,\chi_2]\chi_3 = [\chi_1,\chi_2].$$
\end{lem}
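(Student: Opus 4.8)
The plan is to reduce the three-multiplier case to the two-multiplier analysis already carried out in Lemma~\ref{lem:conjugating commutators with two vertices involved}, using the no-STIL/no-FSIL hypothesis to rule out the ``bad'' configurations. First I would dispose of the trivial cases: if $[\chi_1,\chi_2]=1$ there is nothing to prove, so by Lemma~\ref{lem:partial conj not commute} I may assume there is a SIL $(x_1,x_2\mid z)$ with $z$, $C_1$, $C_2$ related as in one of the four bullets there. Likewise, if $\chi_3$ commutes with both $\chi_1$ and $\chi_2$ we are done, so I may assume $\chi_3$ fails to commute with at least one of them, say with $\chi_1$ (the case of $\chi_2$ is symmetric after relabelling). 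By Lemma~\ref{lem:partial conj not commute} applied to $\chi_1,\chi_3$, there is then a SIL $(x_1,x_3\mid w)$ for an appropriate $w$. So now I have two SILs, $(x_1,x_2\mid \cdot)$ and $(x_1,x_3\mid\cdot)$, sharing the vertex $x_1$, which is exactly the hypothesis of Lemma~\ref{lem:STILfind}.

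Lemma~\ref{lem:STILfind} gives three alternatives. Since $\Gamma$ has no FSIL and no STIL, alternatives (1) and (2) are excluded outright, so we are forced into alternative (3): $x_1,x_2,y$ lie in one component of $(\Gamma\setminus\st(x_3))\cup\{x_2\}$, and $x_1,x_3,z$ lie in one component of $(\Gamma\setminus\st(x_2))\cup\{x_3\}$ (here $y,z$ are the ``separated'' vertices of the two SILs). The point of this geometric conclusion is that it pins down which connected components of $\Gamma\setminus\st(x_3)$ can contain $x_1$, $x_2$, or the support-defining vertices, and forces $x_3$ (and its support $C_3$) to be ``out of the way'' of the SIL data for $(x_1,x_2)$. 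Concretely, I would argue that the conclusion of alternative (3) forces: $x_3\notin C_1$, $x_3\notin C_2$, and the vertices witnessing the SIL $(x_1,x_2\mid\cdot)$ do not lie in $C_3$ (and symmetrically $x_1,x_2\notin C_3$ up to the same analysis on the $(x_1,x_3)$ side). Under these conditions $\chi_3$ commutes with $[\chi_1,\chi_2]$: indeed $[\chi_1,\chi_2]$ is, by the computations in the proof of Lemma~\ref{lem:conjugating commutators with two vertices involved}, a partial conjugation (or product of such) supported on a component $D$ with $\langle x_1,x_2\rangle$-valued multiplier, where $D\subseteq C_1$ or $D\subseteq C_2$; since $x_3\notin D$ and the vertices of $D$ lie outside $\st(x_3)$ is not automatic, one checks directly from the definitions that $\chi_3$ and $\chi^g_D$ commute because $D$ and $C_3$ are either disjoint or nested with the multiplier vertex fixed. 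I would run this verification by the same kind of explicit evaluation on vertices used in Cases 1--4 of Lemma~\ref{lem:conjugating commutators with two vertices involved}, rather than re-deriving it abstractly.

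The main obstacle I anticipate is the bookkeeping in translating the componentwise statement of Lemma~\ref{lem:STILfind}(3) into the precise containment relations among $C_1,C_2,C_3$, $x_1,x_2,x_3$, and the SIL-witnesses that are needed to conclude commuting — in particular, handling the several sub-configurations coming from the four bullets of Lemma~\ref{lem:partial conj not commute} for the SIL $(x_1,x_2\mid\cdot)$ (whether $z\in C_1=C_2$, or $x_1\in C_2$, etc.), and correctly using Lemma~\ref{lem:rewriting commutator} to rewrite $[\chi_1,\chi_2]$ when $x_1\in C_2$ (or $x_2\in C_1$) so that it becomes a product of commutators each of which manifestly commutes with $\chi_3$. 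I expect the cleanest organization is: (a) reduce to $[\chi_1,\chi_2]\ne 1$ and $[\chi_1,\chi_3]\ne1$; (b) invoke Lemma~\ref{lem:STILfind} and kill (1),(2); (c) from (3), extract ``$x_3$ and $C_3$ are disjoint from the relevant part of the $(x_1,x_2)$-SIL data''; (d) if necessary apply Lemma~\ref{lem:rewriting commutator} to present $[\chi_1,\chi_2]$ as a product of partial conjugations supported inside a single component avoiding $x_3$; (e) conclude by the vertex-by-vertex commuting check, exactly mirroring the computations in Lemma~\ref{lem:conjugating commutators with two vertices involved}.
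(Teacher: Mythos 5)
Your overall strategy coincides with the paper's: reduce to the case where both $[\chi_1,\chi_2]$ and $[\chi_1,\chi_3]$ are nontrivial, feed the two resulting SILs $(x_1,x_2\mids Z)$ and $(x_1,x_3\mids Z')$ into Lemma~\ref{lem:STILfind}, use the no-STIL/no-FSIL hypothesis to land in alternative (3), invoke Lemma~\ref{lem:rewriting commutator} when $x_1\in C_2$, and finish by explicit evaluation on vertices. (The paper also uses Lemma~\ref{lem:no_overlap} to get $Z\cap Z'=\emptyset$, which you omit but would need.)

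However, the concrete containments you propose to extract in step (c) are wrong, and the error sits exactly at the one genuinely hard configuration. Combining $Z\cap Z'=\emptyset$ with $x_3\notin Z$ and $x_2\notin Z'$ (from alternative (3)) forces $x_2\in C_1$ \emph{and} $x_3\in C_1$ --- the opposite of your claim that $x_3\notin C_1$. Likewise, the case $x_1\in C_3$ cannot be excluded; after reducing to $C_2=Z$ via Lemma~\ref{lem:rewriting commutator}, alternative (3) then forces $\{x_1,x_2\}\cup C_2\subseteq C_3$. In that situation $[\chi_1,\chi_2]=\chi^{[x_2,x_1]}_{C_2}$ has support nested inside $C_3$ and its multiplier word $[x_2,x_1]$ is \emph{not} fixed by $\chi_3$: both letters $x_1,x_2$ are conjugated by $x_3$. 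Commutation still holds, but only because $\chi_3$ conjugates the support $C_2$ and the letters of the multiplier coherently, so that $\chi_3\,\chi^{[x_2,x_1]}_{C_2}\,\chi_3=\chi^{x_3[x_2,x_1]x_3}_{x_3C_2x_3}=\chi^{[x_2,x_1]}_{C_2}$; your stated justification (``supports disjoint, or nested with the multiplier vertex fixed'') does not cover this case, and a proof organized around $x_3\notin C_1$ and $x_1,x_2\notin C_3$ would either miss it or derive false intermediate facts. The fix is precisely the paper's: establish $x_2,x_3\in C_1$, split on $C_3=Z'$ versus $x_1\in C_3$, and in the latter case do the computation above rather than a disjointness argument.
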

\begin{rem}\label{rem:conjugate of commutator usuall doesnt do much}
    In the absence of a STIL or FSIL, whenever $\Gamma$ is connected, Lemmas~\ref{lem:conjugating commutators with two vertices involved} and \ref{lem:conjugating commutators with three vertices involved} imply that
    $\chi_3 [\chi_1,\chi_2]\chi_3 = [\chi_1,\chi_2]^{\pm 1}$ unless $\chi_1$ and $\chi_2$ act non-trivially on each other's multiplier, $\chi_3$ shares a multiplier with one of $\chi_1$ or $\chi_2$, and the multipliers of $\chi_1$ and $\chi_2$ form a SIL with the support of $\chi_3$.
\end{rem}

\begin{proof}

    \begin{figure}[h!]
        \begin{tikzpicture}
\tikzstyle{every node}=[shape=circle, color=black, fill=black]
\node[label=above:$x_1$] (1) at (5,3) {}; \node[label=right:$x_2$]
(2) at (3,6) {}; \node[label=left:$x_3$] (3) at (7,6) {}; \node (12)
at (3.5,3.75) {}; \node (13) at (6.5, 3.75) {}; \node[draw=none,
fill=none] (121) at (2,3) {}; \node[draw=none, fill=none] (131) at
(8,3) {}; \node[draw=none, fill=none] (21) at (2.5, 7) {};
\node[draw=none, fill=none] (22) at (3.5,7) {}; \node[draw=none,
fill=none] (31) at (6.5,7) {}; \node[draw=none, fill=none] (32) at
(7.5,7) {}; \node[draw=none, fill=none] (11) at (4, 1.5) {};
\node[draw=none, fill=none] (14) at (5,1) {}; \node[draw=none,
fill=none] (15) at (6, 1.5) {}; \node[draw=none, fill=none, inner
sep=0pt] (130) at (7.4, 3.3) {}; \node[draw=none, fill=none, inner
sep=0pt] (120) at (2.6, 3.3) {}; \node[draw=none, fill=none, inner
sep=0pt] (201) at (2.75, 6.5) {}; \node[draw=none, fill=none, inner
sep=0pt] (202) at (3.25, 6.5) {}; \node[draw=none, fill=none, inner
sep=0pt] (301) at (6.75, 6.5) {}; \node[draw=none, fill=none, inner
sep=0pt] (302) at (7.25,6.5) {}; \node[draw=none, fill=none, inner
sep=0pt] (101) at (13/3,2) {}; \node[draw=none, fill=none, inner
sep=0pt] (104) at (5,1.75) {}; \node[draw=none, fill=none, inner
sep=0pt] (105) at (17/3,2) {};

\draw (1)--(101); \draw[dashed] (101)--(11); \draw (1)--(12); \draw
(1)--(13); \draw (1)--(104); \draw[dashed] (104)--(14); \draw
(1)--(105); \draw[dashed](105)--(15); \draw (2)--(12); \draw
(2)--(201); \draw(2)--(202); \draw[dashed] (201)--(21);
\draw[dashed](202)--(22); \draw(3)--(13); \draw(3)--(301); \draw
(3)--(302); \draw[dashed] (301)--(31); \draw[dashed] (302)--(32);
\draw (12) -- (120); \draw[dashed] (120) -- (121); \draw
(13)--(130); \draw[dashed] (130)--(131);

\node[draw=none, fill=none] at (5,7) {{\Large $C_1$}};
\node[draw=none, fill=none] at (1.5,2.5) {{\Large $C_2=Z$}};
\node[draw=none, fill=none] at (8.25,2.5) {{\Large $Z'$}};
\draw[dashed, color=gray] (2,6.5) to[bend right=75] (8,6.5);
\draw[dashed, color=gray] (1.25, 4.3)  to[bend left=65] (2.5,1.5);
\draw[dashed, color=gray] (8.75, 4.3)  to[bend right=65] (7.5,1.5);
\end{tikzpicture}
        \caption{Without loss of generality, we can consider the case when $x_2$ and $x_3$ are both in $C_1$, and $(x_1,x_2\mids C_2)$ is a SIL.}\label{fig:three distinct multipliers}
    \end{figure}
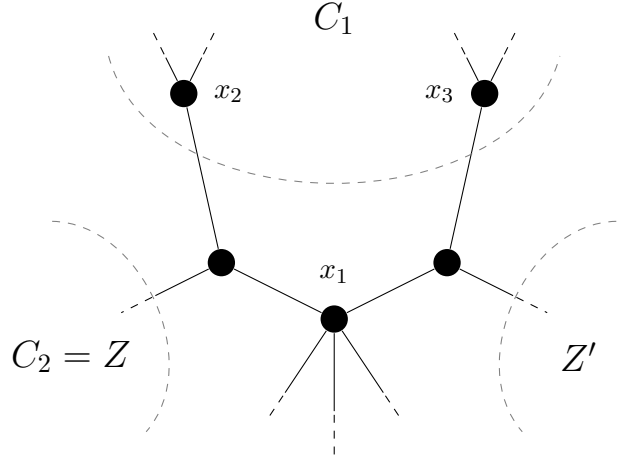

	The result is trivial if $[\chi_1,\chi_2]=1$, so we assume otherwise.
    In order for $[\chi_1,\chi_2]\neq 1$ we require a SIL $(x_1,x_2 \mids Z)$ and either $x_2\in C_1$ or $C_1=Z$.
    Furthermore, without loss of generality we may assume that $\chi_1$ and $\chi_3$ also do not commute, since if $\chi_3$ commutes with both $\chi_1$ and $\chi_2$ then the result is again immediate. 
    For $[\chi_1,\chi_3]\neq 1$ we need a SIL $(x_1,\chi_2\mids Z')$ and either $x_3\in C_1$ or $C_1=Z'$.
    By Lemma~\ref{lem:no_overlap}, the absence of a STIL in $\Gamma$ implies $Z\cap Z'=\emptyset$.
    From Lemma~\ref{lem:STILfind}, lacking a STIL or FSIL means we must have $\{x_1,x_2\}\cup Z$ contained in one connected component of $(\Gamma\setminus\st(x_3))\cup\{x_2\}$. 
    Similarly, $\{x_1,x_3\}\cup Z'$ lies on one connected component of $(\Gamma\setminus\st(x_2))\cup\{x_3\}$. 
    In particular, $x_3 \notin Z$ and $x_2\notin Z'$.
    This means we must have $x_2,x_3\in C_1$, since any of the other possible combinations lead to a contradiction.
    
    Since $[\chi_1,\chi_2]\neq 1$, we will have either $C_2 = Z$ or $x_1\in C_2$.
    Similarly $[\chi_1,\chi_3]\neq 1$ implies either $C_3=Z'$ or $x_1\in C_3$.
    Assume that $x_1\in C_2$.
    Let $\chi_2=\chi_{2,1},\chi_{2,2},\ldots,\chi_{2,r}$ be the list of partial conjugations with multiplier $x_2$.
    Then $[\chi_1,\chi_2] = [\chi_1,\chi_{2,2}]\cdots [\chi_1,\chi_{2,r}]$ by Lemma~\ref{lem:rewriting commutator}.
    We are therefore reduced to proving the lemma when $x_1\notin C_2$.

	Thus, we may assume $C_2=Z$, and either $x_1\in C_3$ or $C_3=Z'$, as depicted in Figure~\ref{fig:three distinct multipliers}.
    Direct calculation shows that $[\chi_1,\chi_2]$ is equal to $\chi^{[x_2,x_1]}_{C_2}$, sending each $v\in C_2$ to $[x_2,x_1]v[x_1,x_2]$, and fixing all other generators.
    
    If $C_3=Z'$ then we can see $[\chi_3,[\chi_1,\chi_2]]=1$ since $\chi_3$ and $[\chi_1,\chi_2]$ have disjoint supports in $\Gamma$ and both multipliers are fixed.

    We are left with the case when $x_1\in C_3$.
    By Lemma~\ref{lem:STILfind}(3), $\{x_1\}\cup C_2 \subseteq C_3$ and $x_2\in C_3$ too, else $[x_2,x_3]=1$.
    Direct calculations, left to the reader to verify, yield that $[\chi_3,[\chi_1,\chi_2]]=1$, and the proof of the lemma is complete.
    \end{proof}

\begin{proof}[Proof of Lemma~\ref{lem:gen set derived subgroup}]
    The derived subgroup $\Out^0(W_\Gamma)'$ is generated by all conjugates of commutators of partial conjugations.
    Let $\chi_1,\chi_2,\chi_3$ be three partial conjugations.
   Lemmas~\ref{lem:conjugating commutators with two vertices involved} ~and~\ref{lem:conjugating commutators with three vertices involved} imply that $\chi_3[\chi_1,\chi_2]\chi_3$ is a product commutators of partial conjugations. 
   By induction, for any $g\in \Out^0(W_\Gamma)$, $g[\chi_1,\chi_2]g^{-1}$ is a product of commutator of partial conjugations.
\end{proof}

\subsection{Commuting commutators}

To complete the proof of Proposition~\ref{prop:devived subgroup abelian}, in light of Lemma~\ref{lem:gen set derived subgroup} we need only check that commutators of partial conjugations commute with each other:

\begin{lem}\label{lem:derived subgroup abelian}
    Suppose $\Gamma$ is connected and has no STIL or FSIL. Then, for any four partial conjugations $\chi_1,\chi_2,\chi_3,\chi_4$ in $\Out(W_\Gamma)$, we have
    $$[\chi_1,\chi_2][\chi_3,\chi_4] = [\chi_3,\chi_4][\chi_1,\chi_2].$$
\end{lem}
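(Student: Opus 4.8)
The strategy is to reduce the statement to a finite check governed by SILs, using the structural dichotomy already established. By Lemma~\ref{lem:partial conj not commute}, a commutator $[\chi_i,\chi_j]$ is trivial unless the multipliers of $\chi_i,\chi_j$ are distinct and form a SIL (together with the appropriate support conditions). So I may assume $[\chi_1,\chi_2]\neq 1$ and $[\chi_3,\chi_4]\neq 1$; write $\chi_i = \chi^{x_i}_{C_i}$, so $(x_1,x_2\mid Z)$ and $(x_3,x_4\mid Z')$ are SILs for the relevant components $Z,Z'$. Using Lemma~\ref{lem:rewriting commutator} I can further normalise: if $x_1\in C_2$ I rewrite $[\chi_1,\chi_2]$ as a product of commutators $[\chi_1,\chi_{2,k}]$ with $x_1\notin C_{2,k}$ (and symmetrically for the other three multipliers), so it suffices to treat the case where no multiplier of one factor lies in the support of its partner within the same commutator. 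As in the proof of Lemma~\ref{lem:conjugating commutators with three vertices involved}, in that normalised situation $[\chi_1,\chi_2] = \chi^{[x_2,x_1]}_{C_2}$ with $C_2=Z$, and likewise $[\chi_3,\chi_4]=\chi^{[x_4,x_3]}_{C_4}$ with $C_4=Z'$; each is a ``generalised partial conjugation'' moving only the vertices in one component and fixing all four multipliers.

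Now I case-split on how many of $x_1,x_2,x_3,x_4$ coincide. If all four multipliers are distinct, then the two generalised partial conjugations have the property that each fixes the other's multipliers, and the question becomes whether their supports interact. Here the key geometric input is Lemma~\ref{lem:no_overlap} and Lemma~\ref{lem:STILfind}: absence of a STIL forces $Z$ and $Z'$ to be disjoint (or nested) in a controlled way, and absence of STILs/FSILs across the overlapping-SIL configurations (e.g. $(x_1,x_2\mid Z)$ and $(x_1,x_3\mid \,\cdot\,)$ when two multipliers agree) pins down which component contains which multiplier. When $C_2$ and $C_4$ are disjoint and each commutator fixes the other's multipliers, the two elements have disjoint ``effective supports'' and commute immediately. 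When one component is contained in the other, the analysis mirrors Case~4 of Lemma~\ref{lem:conjugating commutators with three vertices involved}: a direct computation shows the nested generalised partial conjugations still commute because the conjugating word $[x_2,x_1]$ (resp.\ $[x_4,x_3]$) lies outside the inner support. The cases with exactly one shared multiplier (say $x_2=x_3$, or $x_1=x_3$) are where Lemma~\ref{lem:conjugating commutators with two vertices involved} and Lemma~\ref{lem:STILfind} do the heavy lifting: the overlapping-SIL hypotheses together with no STIL/FSIL restrict the configuration to exactly the ``benign'' shapes where a short calculation gives commutativity, and the case of two shared multipliers (i.e. $\{x_1,x_2\}=\{x_3,x_4\}$) reduces to commutators with the same pair of multipliers but possibly different supports, handled by a variant of Lemma~\ref{lem:conjugating commutators with two vertices involved}.

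The main obstacle is the bookkeeping in the shared-multiplier cases: after the normalisation step one has several SILs sharing a vertex, and one must invoke Lemma~\ref{lem:STILfind} repeatedly to rule out the scenarios in which the supports genuinely interfere, while being careful that the normalisation via Lemma~\ref{lem:rewriting commutator} does not reintroduce interaction among the newly created commutators. I expect that, just as in Remark~\ref{rem:conjugate of commutator usuall doesnt do much}, the only potentially problematic configuration is when the four multipliers reduce to three and the relevant supports form a SIL with the third multiplier — and precisely that configuration is excluded by the no-STIL/no-FSIL hypothesis via Lemma~\ref{lem:STILfind}. The remaining verifications are routine substitution calculations of the type carried out explicitly in the proof of Lemma~\ref{lem:conjugating commutators with two vertices involved}, which I would summarise rather than reproduce in full.
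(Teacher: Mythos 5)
Your toolbox is the right one and your case division (by how many of the four multipliers coincide) matches the paper's, but two steps do not hold up as written. First, in the all-distinct case you appeal to Lemma~\ref{lem:no_overlap} to force $Z$ and $Z'$ to be disjoint ``or nested in a controlled way''; that lemma only applies to two SILs sharing a vertex, so it says nothing about $(x_1,x_2\mids Z)$ versus $(x_3,x_4\mids Z')$ when all four multipliers are distinct, and in general these components can overlap. This matters, because two generalised partial conjugations $\chi^{[x_2,x_1]}_Z$ and $\chi^{[x_4,x_3]}_{Z'}$ with genuinely overlapping supports do \emph{not} commute by a ``disjoint effective supports'' argument. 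The paper avoids this entirely: for distinct multipliers Lemma~\ref{lem:conjugating commutators with three vertices involved} already says each of $\chi_3$ and $\chi_4$ separately commutes with $[\chi_1,\chi_2]$ in $\Out(W_\Gamma)$, and the geometric control there comes from the \emph{pairwise} configurations (e.g.\ $(x_1,x_2\mids Z)$ against $(x_1,x_3\mids\,\cdot\,)$), where Lemmas~\ref{lem:no_overlap} and \ref{lem:STILfind} do apply. You should simply quote that lemma twice rather than re-derive it at the level of the two normal forms.

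Second, and more seriously, the shared-multiplier cases --- which are the actual content of the lemma --- are asserted rather than argued. The paper's proof of the case $x_4=x_1$, $x_2\neq x_3$ is not a ``short calculation in a benign shape'': it rewrites $\chi_4[\chi_1,\chi_2]\chi_4$ via Lemma~\ref{lem:conjugating commutators with two vertices involved} as a product of commutators $[\theta,\chi_2]^{\pm1}$ with $\theta$ of multiplier $x_1$, observes each factor commutes with $\chi_3$ by Lemma~\ref{lem:conjugating commutators with three vertices involved}, and then uses the involutivity of $\chi_4$ in the identity $[\chi_4,\chi_3][\chi_1,\chi_2][\chi_3,\chi_4]=\chi_4\chi_3\bigl(\chi_4[\chi_1,\chi_2]\chi_4\bigr)\chi_3\chi_4=\chi_4^2[\chi_1,\chi_2]\chi_4^2$; the two-multiplier case similarly applies Lemma~\ref{lem:conjugating commutators with two vertices involved} twice to conjugate $[\phi_k,\theta_l]$ by $\theta_j$ and then $\phi_i$. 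Nothing in your proposal substitutes for these computations. Your alternative --- normalising both commutators via Lemma~\ref{lem:rewriting commutator} to generalised partial conjugations $\chi^{[x_2,x_1]}_Z$, $\chi^{[x_4,x_3]}_{Z'}$ and checking directly that they commute using Lemma~\ref{lem:no_overlap} ($Z\cap Z'=\emptyset$) and Lemma~\ref{lem:STILfind}(3) ($Z\cap\st(x_4)=\emptyset$, etc.) --- is a viable and arguably more transparent route in the shared-multiplier cases, but it is exactly those verifications that constitute the proof, and they are the part you have deferred. As it stands the proposal is a plan with one misapplied lemma and the decisive steps missing.
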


\begin{proof}
Let $\chi_i=\chi^{x_i}_{C_i}$ for $i=1,2,3,4$.
We may assume that $x_1\neq x_2$ and $x_3\neq x_4$, so both commutators are non-trivial.

First note that if all vertices $x_1,x_2,x_3,x_4$ are distinct then Lemma~\ref{lem:conjugating commutators with three vertices involved}
tells us that the commutators commute.
So we may assume, say, that $x_4=x_1$.

Next assume that $x_2 \neq x_3$.
Then by Lemma~\ref{lem:conjugating commutators with three vertices involved},
$\chi_3[\theta,\chi_2]\chi_3 = [\theta,\chi_2]$,
where $\theta$ is any partial conjugation with multiplier $x_1$, since $x_1=x_4\neq x_3$.
Since Lemma~\ref{lem:conjugating commutators with two vertices involved} tells us that
$\chi_4[\chi_1,\chi_2]\chi_4$ is a product of commutators of the form $[\theta,\chi_2]^{\pm 1}$,
we see that it commutes with $\chi_3$, and hence
        $$[\chi_4,\chi_3][\chi_1,\chi_2][\chi_3,\chi_4] = \chi_4\chi_3\left(\chi_4[\chi_1,\chi_2]\chi_4\right)\chi_3\chi_4=\chi_4^2 [\chi_1,\chi_2]\chi_4^2=[\chi_1,\chi_2].$$

We finish by assuming that $x_2=x_3$.

    Let $\theta_1,\ldots,\theta_r$ be all the partial conjugations with multiplier $x_1$, and let $\phi_1,\ldots,\phi_s$ be the partial conjugations with multiplier $x_2$.

    We claim $[\phi_i,\theta_j]$ commutes with $[\phi_k,\theta_l]$ for $1\leq i,k\leq r$ and $1\leq j,l\leq s$.

    In order for the claim to be non-trivial, we need a SIL of the form $(x_1,x_2\mids z)$ for some vertex $z$.
    Let $C_i$ be the support of $\theta_i$ and $D_i$ the support of $\phi_i$.
    Reorder the partial commutators if necessary so that $x_1\in D_1$, $x_2\in C_1$, and $C_2=D_2,\ldots, C_m=D_m$, where $(x,y\mids C_i)$, for $i=2,\ldots ,m$, exhausts the list of SILs involving $x_1,x_2$.
	If $i=j=k=l=1$, the result is trivial. So first suppose that $(k,l)\neq (1,1)$.
    We apply Lemma~\ref{lem:conjugating commutators with two vertices involved} twice to show that:
            $$
            \phi_i\theta_j[\phi_k,\theta_l]\theta_j\phi_i = [\phi_k,\theta_l]^{\pm 1}.
            $$
    This implies that $[\phi_i,\theta_j]$ commutes with $[\phi_k,\theta_l]$ when $(k,l)\neq(1,1)$. If $(k,l)=(1,1)$, then $(i,j)\neq (1,1)$, so we may swap the roles of $\{i,j\}$ and $\{k,l\}$ to obtain the same result, completing the proof.
\end{proof}

\section{The large case}\label{sec:large case}\label{sec:large}

In this section we prove the other half of the dichotomy.

\begin{thm}\label{thm:STIL FSIL implies large}
    Suppose $\Gamma$ contains a STIL or an FSIL. 
    
    Then $\Out(W_\Gamma)$ is large.
\end{thm}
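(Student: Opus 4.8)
The plan is to produce, for a graph $\Gamma$ containing a STIL or an FSIL, an explicit homomorphism from a finite-index subgroup of $\Out(W_\Gamma)$ onto a virtually non-abelian free group; since largeness passes up along finite-index overgroups, largeness of $\Out(W_\Gamma)$ follows from largeness of $\Out^0(W_\Gamma)$, so it suffices to work with the latter. The source of the homomorphism will be $\Out^0(W_\Gamma)$ (or a finite-index subgroup thereof), and the strategy is to ``collapse'' $\Gamma$ onto a minimal configuration carrying the STIL/FSIL: one defines a \emph{factor map} to a smaller graph $\Lambda$ obtained by coning off or quotienting out the part of $\Gamma$ away from the relevant vertices $x_1,x_2,x_3,z$, and checks that this induces a well-defined surjection $\Out^0(W_\Gamma)\to \Out^0(W_\Lambda)$ (or onto a subgroup containing the partial conjugations supported near the configuration). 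Then one computes $\Out^0(W_\Lambda)$ directly for the model STIL/FSIL graph and exhibits $F_2$ as a quotient of a finite-index subgroup of it — the key contrast with a plain SIL being that a STIL produces \emph{three} pairwise-non-commuting partial conjugations (or an FSIL three SILs arranged cyclically), and in a RACG three such involutions generate a group mapping onto $\Z/2 * \Z/2 * \Z/2$, which is virtually $F_2$, rather than $D_\infty$, which is only virtually $\Z$.

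Concretely I would proceed as follows. First, dispose of the disconnected case quickly: if $\Gamma$ is disconnected the argument in the proof of Proposition~\ref{prop:disconnected_v_abelian} shows how STILs/FSILs arise, and in each such configuration one finds three vertices whose associated partial conjugations $\chi^{x_i}_{C}$ pairwise fail to commute by Lemma~\ref{lem:partial conj not commute}; these three involutions generate (a quotient of) $\Z/2 * \Z/2 * \Z/2$ inside $\Out^0(W_\Gamma)$ once one checks no further relations are forced. Second, in the connected case, handle the FSIL: given an FSIL $\{x_1,x_2,x_3\}$, each pair $(x_i,x_j\mid x_k)$ is a SIL, so there are partial conjugations $\chi_i$ with $\chi_i$ moving $x_j$ and $x_k$ for which every pair $\chi_i,\chi_j$ is non-commuting; project to the subgroup they generate. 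Third, handle the STIL $(x_1,x_2,x_3\mid z)$: here one takes partial conjugations $\chi_i=\chi^{x_i}_{C}$ all with support the component $C$ containing $z$ (available because the subgraph on $\{x_1,x_2,x_3\}$ has at most one edge, so at least one $x_i$ is non-adjacent to each of the others), and again each relevant pair is non-commuting by Lemma~\ref{lem:partial conj not commute}. In all three cases the endgame is the same: use a retraction/factor map to kill the ``noise'' and show the image of $\langle \chi_1,\chi_2,\chi_3\rangle$ is virtually $F_2$; a clean way to certify this is to map onto $(\Z/2)^{*3}$ or onto $\Z/2 * \Z/2 * \Z/2$ and invoke that this group has a finite-index subgroup surjecting onto $F_2$.

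The main obstacle, and where the real work lies, is constructing the factor map and proving it induces a \emph{well-defined} homomorphism on outer automorphism groups rather than merely on $\Aut$ or on a subgroup of $W_\Gamma$. One must show that the target graph's partial conjugations are genuinely non-inner (so that $\Out^0(W_\Lambda)$ really is big), which requires the analogue of the ``$\st(v)$ separating'' condition to survive the collapse — this is exactly where the STIL/FSIL hypothesis is used, guaranteeing the relevant links still intersect in a separating set in $\Lambda$. A secondary subtlety is that collapsing $\Gamma$ onto $\Lambda$ may identify distinct vertices or introduce edges, so one needs the factor map to be \emph{graph-theoretically} compatible: the preimage of $\st_\Lambda(v)$ should contain $\st_\Gamma(v)$, ensuring partial conjugations pull back correctly. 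Finally one needs that the three partial conjugations in the image do not collapse to a proper quotient of $(\Z/2)^{*3}$ that is virtually cyclic — this is handled by observing, via Lemma~\ref{lem:partial conj not commute} applied in $\Lambda$, that no pair commutes and there is no hidden braid-type relation, after which a ping-pong or Euler-characteristic/Bass–Serre argument on the action of $(\Z/2)^{*3}$ on its Bass–Serre tree exhibits a rank-$2$ free subgroup of finite index, completing the proof.
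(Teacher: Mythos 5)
Your overall skeleton---restrict attention to $\Out^0(W_\Gamma)$, use a factor map onto the right-angled Coxeter group of the full subgraph spanned by the STIL/FSIL vertices, and show the image is virtually non-abelian free---is exactly the paper's strategy (the factor map is simply the retraction killing all generators outside the subgraph; no coning or vertex identification is needed, so your worries there are moot). But the decisive step is missing. You repeatedly pass from ``three pairwise non-commuting involutions'' to ``a group mapping onto $\Z_2\ast\Z_2\ast\Z_2$, hence virtually $F_2$,'' deferring the justification to ``once one checks no further relations are forced'' and ``no hidden braid-type relation.'' That check is the entire content of the theorem: three pairwise non-commuting involutions can generate a finite group (e.g.\ the three transpositions in $S_3$), and more generally generate an arbitrary quotient of $\Z_2^{\ast 3}$ in which no two generators commute, which need not be large. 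Lemma~\ref{lem:partial conj not commute} only rules out commuting relations; it gives no control over longer relations, and ping-pong requires explicit domains you have not produced. There is also a subgroup/quotient confusion: even if the three chosen partial conjugations do generate a copy of $\Z_2^{\ast3}$ \emph{inside} the image of the factor map, largeness requires the image (or a finite-index subgroup of it) to \emph{surject} onto a virtually free group; ``project to the subgroup they generate'' is not an available operation, and containing $F_2$ does not imply largeness.

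The paper closes this gap with two inputs absent from your proposal. First, Collins's theorem (Theorem~\ref{thm:virtfree}): when the target subgraph gives a free product of \emph{three} finite groups ($\Gamma_3$ for an FSIL, $\Gamma_{3,1}$ for a STIL with an edge), the whole of $\Out^0(W_{\Gamma'})$ is virtually free, so every subgroup---in particular the image of the factor map---is virtually free, and one only has to rule out virtual cyclicity, which is done by exhibiting explicit non-trivial, non-inner automorphisms (the computation of $\Theta(x_4)$ in Proposition~\ref{prop:STIL_edge_large}). Second, for the edgeless STIL the target is $\Out^0(\Z_2^{\ast4})$, which is \emph{not} virtually free, so Collins does not apply; the paper instead pins down which partial conjugations can lie in the image (using Lemmas~\ref{lem:PC with multiplier 4 gives FSIL} and~\ref{lem:two separating gives SIL} to divert any extra ones into the FSIL case) and computes an explicit presentation of the image via M\"uhlherr, verifying case by case that it is $\Z_2^{\ast3}$ or $(\Z_2\times\Z_2)\ast\Z_2$. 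Without some substitute for these two ingredients, your argument does not establish largeness in any of the three cases.
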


Our main tool are factor maps into the automorphism groups of subgraphs of $\Gamma$.
These have been previously established for $\Aut(W_\Gamma)$ \cite[Theorem 3.9]{GPR_automorphisms},
and they allow us to focus our attention on a chosen few partial conjugations.

Given a full subgraph $\Gamma'$ of $\Gamma$, let $\kappa$ be the projection map
$\kappa \colon W_\Gamma \to W_{\Gamma'}$ obtained by sending all vertices in $\Gamma\setminus\Gamma'$ to $1$.
We use this to define the \emph{factor map} for $\Gamma'$:
$$f \colon \Aut^0(W_\Gamma) \to \Aut^0(W_{\Gamma'}).$$
For $\phi \in \Aut^0(W_\Gamma)$ and $g\in W_\Gamma$, define $f(\phi)$ to send $\kappa(g)$ to  $\kappa(\phi(g))$.
Since the kernel of $\kappa$ is the normal subgroup generated by $\Gamma\setminus\Gamma'$,
$\phi(\ker(\kappa))=\ker(\kappa)$, and so this gives a well-defined homomorphism.

Further, $f$ takes inner automorphisms to inner automorphisms and so $f$ descends to a homomorphism (which, abusing notation, we also call $f$):
$$f\colon \Out^0(W_\Gamma)\to\Out^0(W_\Gamma').$$

Let $\Gamma_3$ be the discrete graph on 3 vertices, $\Gamma_4$ be
the discrete graph on 4 vertices, and $\Gamma_{3,1}$ be the graph on
4 vertices with one edge.
If $\Gamma$ contains an FSIL, then the factor map $f$ has image in $\outo{\cox{\Gamma_3}}$.
Meanwhile, if $\Gamma$ contains a STIL, then the homomorphism $f$ maps $\Out^0(W_\Gamma)$ into either
$\outo{\cox{\Gamma_4}}$ or $\outo{\cox{\Gamma_{3,1}}}$.

To analyze these cases, we require the following theorem of Collins.

\begin{thm}[{\cite[Corollary 3.2]{Collins:autofreefinite}}] \label{thm:virtfree} 
    Let $G_1, G_2, G_3$ be finite groups. The virtual cohomological dimension of $\Out\left(G_1\ast G_2\ast
G_3\right)$ is $1$.

    In particular, $\Out(G_1 \ast G_2 \ast G_3)$ is virtually free.
\end{thm}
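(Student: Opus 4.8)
The plan is to establish the two bounds $\operatorname{vcd}(\Out(G)) \ge 1$ and $\operatorname{vcd}(\Out(G)) \le 1$ separately, writing $G = G_1 \ast G_2 \ast G_3$ and assuming each $G_i$ is nontrivial. (If some $G_i$ is trivial then $G$ is a free product of at most two finite groups, and a short calculation --- any partial conjugation of one factor by an element of the other differs from an inner automorphism by an inner automorphism of a factor --- shows $\Out(G)$ is finite, so $\operatorname{vcd}(\Out(G)) = 0$.) For the lower bound it is enough to exhibit an infinite-order element of $\Out(G)$: for nontrivial $g \in G_2$ and $h \in G_3$ the element $gh$ has infinite order in $G$, and the automorphism that conjugates $G_1$ by $gh$ and fixes $G_2$ and $G_3$ pointwise --- a composite of two partial conjugations --- is non-inner and of infinite order. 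Since a group of finite virtual cohomological dimension is finite precisely when its $\operatorname{vcd}$ is $0$, this gives $\operatorname{vcd}(\Out(G)) \ge 1$, and this step is routine.

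The content is the upper bound, and the strategy is to realise a finite-index subgroup of $\Out(G)$ as a finitely generated group acting on a tree with finite stabilisers. First I would cut down to a finite-index subgroup: by the Kurosh subgroup theorem the conjugacy classes $[G_1], [G_2], [G_3]$ are exactly the conjugacy classes of the maximal freely indecomposable free factors of $G$ that are not infinite cyclic, so every automorphism of $G$ permutes them and restricts, up to conjugacy, to an automorphism of each $G_i$. Intersecting the kernels of the resulting homomorphisms from $\Out(G)$ to the finite groups $\operatorname{Sym}\{[G_1],[G_2],[G_3]\}$ and $\Out(G_1) \times \Out(G_2) \times \Out(G_3)$ produces a finite-index subgroup $\Sigma \le \Out(G)$ whose elements are classes of \emph{symmetric} automorphisms, namely those sending each element of each $G_i$ to a conjugate of itself. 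It then suffices to show $\operatorname{vcd}(\Sigma) \le 1$.

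Next I would bring in the deformation space of metric $G$-trees with trivial edge stabilisers and vertex stabilisers the $G_i$ --- equivalently the McCullough--Miller complex of the free product $G_1 \ast G_2 \ast G_3$, or the spine of the Guirardel--Levitt outer space of this free product --- which carries a natural $\Sigma$-action. The geometric facts I would need are: for a free product of exactly three freely indecomposable factors with no free abelian part this spine is one-dimensional and contractible, so it is a tree $T$; the $\Sigma$-action on $T$ is cocompact; and, because every $G_i$ is finite, all vertex and edge stabilisers are finite --- a stabiliser embeds, the contribution of edge twists vanishing as the edge groups are trivial, in the automorphism group of a finite graph of finite groups, which is finite. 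Since $\Sigma$ is finitely generated (by partial conjugations and the finitely many symmetries), the Karrass--Pietrowski--Solitar theorem then exhibits $\Sigma$ as the fundamental group of a finite graph of finite groups, so $\Sigma$ is virtually free; a torsion-free finite-index subgroup of $\Sigma$ is free, of cohomological dimension at most $1$, so $\operatorname{vcd}(\Sigma) \le 1$ and hence $\operatorname{vcd}(\Out(G)) \le 1$. Combining the two bounds gives $\operatorname{vcd}(\Out(G)) = 1$, and virtual freeness of $\Out(G)$ then follows from the Stallings--Swan theorem applied to a torsion-free finite-index subgroup of $\Out(G)$.

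I expect the main obstacle to be the geometric input of the previous paragraph: establishing that the relevant complex genuinely is a tree (the dimension count), that it is contractible, and that the $\Sigma$-action is cocompact with finite cell stabilisers. For three factors this is in principle checkable by hand, since a reduced $G$-tree in this deformation space has underlying graph a tripod or a path and, after projectivising the edge-length parameters, the space is built from finitely many cells of dimension at most two, with one-dimensional spine; but making this rigorous, and verifying contractibility, is the real work, so one may instead simply quote McCullough--Miller or Guirardel--Levitt. A secondary, purely bookkeeping, point --- and the only place finiteness of the $G_i$ is essential --- is confirming that $\Sigma$ has finite index in $\Out(G)$.
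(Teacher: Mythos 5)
This theorem is not proved in the paper at all: it is imported verbatim from Collins (\cite[Corollary 3.2]{Collins:autofreefinite}), so there is no internal argument to compare against. Judged on its own terms, your sketch is a correct outline of a legitimate (and essentially standard, modern) proof, but it is a different route from Collins' original one, which predates the machinery you invoke and proceeds via explicit Fouxe--Rabinovitch-type presentations of the (outer) automorphism group of a free product. Your lower bound is fine (the partial conjugation $\chi^{gh}_{G_1}$ with $g\in G_2$, $h\in G_3$ nontrivial is indeed of infinite order in $\Out(G)$, since any inner automorphism fixing $G_2$ and $G_3$ pointwise must be conjugation by an element of $G_2\cap G_3=\{1\}$, while $(gh)^n$ never centralises $G_1$), and your reduction to the finite-index subgroup $\Sigma$ of symmetric outer automorphisms correctly isolates where finiteness of the $G_i$ is used. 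The caveats are: (i) the statement as quoted implicitly assumes each $G_i$ is nontrivial, which you rightly flag; (ii) the entire content of the upper bound --- that the spine of the relative outer space (equivalently the McCullough--Miller complex) for three factors is a contractible one-dimensional complex with cocompact action and finite stabilisers --- is not proved but deferred to McCullough--Miller or Guirardel--Levitt, so your argument is really a reduction of one cited theorem to another; and (iii) the finiteness of point stabilisers does need the observation you make, that with trivial edge groups there are no twists, so the stabiliser of a reduced tree is an extension of a finite group of graph symmetries by a product of the finite $\Aut(G_i)$. None of these is a gap in correctness, but be aware that what you have written is a proof modulo a black box of comparable depth to the result itself, whereas the paper simply cites Collins and moves on.
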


\begin{rem}\label{rem:collins not vZ}
    The group $\Out(G_1 \ast G_2 \ast G_3)$ is not virtually cyclic.
    Indeed, we can define automorphisms of the free product by taking $g_i$ to be a non-trivial element in $G_i$, and defining $\chi_i$ to fix $G_i$ and $G_{i-1}$, and to conjugate $G_{i+1}$ by $g_i$ (we take indices modulo 3).
    Then $\langle \chi_1,\chi_2,\chi_3\rangle$ will generate a subgroup of $\Out(G_1\ast G_2 \ast G_3)$ that is isomorphic to $\Z_{n_1}\ast \Z_{n_2} \ast \Z_{n_3}$, where $n_i$ is the order of $g_i$.
\end{rem}

In light of Theorem~\ref{thm:virtfree} and Remark~\ref{rem:collins not vZ},
by taking the factor map to the vertices of an FSIL,
we get a map from $\Out^0(W_\Gamma)$ to a virtually non-abelian free group.
The flexibility of the SIL ensures we have sufficient partial conjugations in the image of the factor map to make sure it is not virtually cyclic.
This gives the following.

\begin{prop}\label{prop:FSIL_large} Suppose that $\Gamma$ contains an
FSIL. 

Then $\Out(W_\Gamma)$ is large.
\end{prop}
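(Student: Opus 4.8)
\textbf{Proof proposal for Proposition~\ref{prop:FSIL_large}.}
The plan is to produce a finite-index subgroup of $\Out(W_\Gamma)$ admitting an epimorphism onto $F_2$ by pulling back, through the factor map, a suitable subgroup of the outer automorphism group of a rank-three free product of $\Z/2\Z$'s. Concretely, suppose $\{x_1,x_2,x_3\}$ is an FSIL, and let $\Gamma'$ be the full subgraph on $\{x_1,x_2,x_3\}$; since $(x_i,x_j\mids x_k)$ is a SIL for each permutation, no two of the $x_i$ are adjacent, so $\Gamma' = \Gamma_3$ and $W_{\Gamma'} \cong \Z/2\Z \ast \Z/2\Z \ast \Z/2\Z$. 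The factor map gives a homomorphism $f\colon \Out^0(W_\Gamma)\to \Out^0(W_{\Gamma_3})$. First I would record that $\Out(W_{\Gamma_3})$ is virtually free of vcd $1$ by Theorem~\ref{thm:virtfree}, and moreover not virtually cyclic by Remark~\ref{rem:collins not vZ}: the three automorphisms $\chi_i$ there generate a copy of $\Z/2\Z\ast\Z/2\Z\ast\Z/2\Z$ inside $\Out^0(W_{\Gamma_3})$, which contains $F_2$ with finite index.

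The heart of the argument is to check that these generators $\chi_1,\chi_2,\chi_3\in\Out^0(W_{\Gamma_3})$ all lie in the \emph{image} of $f$. For this I would exhibit, for each cyclically-indexed pair, an actual partial conjugation in $\Out^0(W_\Gamma)$ whose image under $f$ is $\chi_i$ (or a product whose image generates the same finite-index-in-$F_2$ subgroup). Fix $i$ and consider the SIL $(x_{i+1},x_{i-1}\mids x_i)$: there is a connected component $C$ of $\Gamma\setminus(\lk(x_{i+1})\cap\lk(x_{i-1}))$ containing $x_i$ but neither $x_{i+1}$ nor $x_{i-1}$. Now look instead at the partial conjugation with multiplier $x_{i+1}$ (say) supported on the component of $\Gamma\setminus\st(x_{i+1})$ containing $x_i$; I claim its image under $f$ conjugates the vertex group $\langle x_i\rangle$ by $x_{i+1}$ and fixes $\langle x_{i-1}\rangle$ (because $x_{i-1}$ is, by the SIL condition, separated from $x_i$ by $\lk(x_{i+1})\cap\lk(x_{i-1})\subseteq\st(x_{i+1})$, hence lies in a different component of $\Gamma\setminus\st(x_{i+1})$, or is killed by $\kappa$ appropriately). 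Up to relabelling this is exactly one of Collins' generators $\chi_i$. The flexibility of an FSIL — that \emph{all three} of the SILs $(x_i,x_j\mids x_k)$ hold — is what guarantees we get all three generators $\chi_1,\chi_2,\chi_3$ simultaneously, hence the image of $f$ is not virtually cyclic.

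Once the image of $f$ contains $\langle\chi_1,\chi_2,\chi_3\rangle\cong \Z/2\Z\ast\Z/2\Z\ast\Z/2\Z$, largeness follows formally: let $H$ be the preimage under $f$ of this subgroup; then $H$ has a quotient isomorphic to $\Z/2\Z\ast\Z/2\Z\ast\Z/2\Z$, which has $F_2$ as a finite-index subgroup, so the corresponding finite-index subgroup $H_0\le H$ surjects onto $F_2$. Since $\Out^0(W_\Gamma)$ has finite index in $\Out(W_\Gamma)$ and $H_0$ has finite index in $\Out^0(W_\Gamma)$, $H_0$ has finite index in $\Out(W_\Gamma)$, establishing that $\Out(W_\Gamma)$ is large.

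\textbf{Expected main obstacle.} The routine part is Theorem~\ref{thm:virtfree} plus the finite-index-$F_2$-in-a-triple-free-product bookkeeping. The delicate step is the surjectivity-onto-the-generators claim: one must verify carefully that the chosen partial conjugations of $W_\Gamma$ really do descend, via $\kappa$ and $f$, to the prescribed automorphisms of the rank-three free product — in particular that the SIL conditions force each relevant $x_j$ into the correct component of $\Gamma\setminus\st(x_i)$ so that nothing unexpected gets conjugated, and that the images are non-inner in $\Out^0(W_{\Gamma_3})$. Handling the interaction between ``vertex lies in $\Gamma\setminus\Gamma'$ so is killed by $\kappa$'' and ``vertex lies in $\Gamma'$ so must be fixed'' is where the case analysis lives.
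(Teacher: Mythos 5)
Your proposal is correct and follows essentially the same route as the paper: apply the factor map to the full (edgeless) subgraph on the FSIL, use each SIL $(x_{i+1},x_{i-1}\mids x_i)$ to see that the partial conjugation $\chi^{x_{i+1}}_{x_i}$ survives in the image, and invoke Collins' theorem together with Remark~\ref{rem:collins not vZ} to conclude the image is virtually non-abelian free, hence $\Out(W_\Gamma)$ is large. The only bookkeeping point worth tightening is that your subgroup $\langle\chi_1,\chi_2,\chi_3\rangle$ is in fact all of $\Out^0(W_{\Gamma_3})$ (the two partial conjugations with a fixed multiplier coincide in $\Out$), so the preimage $H$ is all of $\Out^0(W_\Gamma)$ and the finite-index claim for $H_0$ is immediate.
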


\begin{proof} Let $\{x_1,x_2,x_3\}$ be vertices of $\Gamma$ defining an FSIL. Then they span a full copy of $\Gamma_3\subseteq \Gamma$. Let $f\colon
\outo{\cox\Gamma}\to\outo{\cox{\Gamma_3}}$ be the corresponding
factor map. Since $\{x_1, x_2,x_3\}$ form an FSIL, the image of
this homomorphism must contain $\chi^{x_i}_{x_j}$ for all $i\neq
j\in\{1,2,3\}$. However, this generates $\outo{\cox{\Gamma_3}}$,
which is virtually non-abelian free by Theorem~\ref{thm:virtfree} and Remark~\ref{rem:collins not vZ}.
Thus, $\outoW$ surjects onto a virtually free group, and so is
large.
\end{proof}

We now consider the case where $\Gamma$ contains a STIL $(x_1, x_2,
x_3\mids x_4)$. This breaks into two cases, depending on whether the
full subgraph spanned by $\{x_1, x_2, x_3\}$ contains an edge.
When it does contain an edge, the factor map has image contained in $\Out^0(W_{\Gamma_{3,1}})$, which is virtually free by Theorem~\ref{thm:virtfree}.
As in the FSIL case, we just need to check the image is not virtually cyclic.

\begin{prop}\label{prop:STIL_edge_large}
Suppose that $\Gamma$ contains a STIL $(x_1,x_2,x_3\mids x_4)$ and
that $x_1$ and $x_2$ are joined by an edge in $\Gamma$. 

Then $\Out(W_\Gamma)$ is large.
\end{prop}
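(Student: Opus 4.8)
The plan is to mimic the proof of Proposition~\ref{prop:FSIL_large}, using the factor map $f\colon \outo{\cox\Gamma}\to\outo{\cox{\Gamma_{3,1}}}$ associated to the full subgraph spanned by $\{x_1,x_2,x_3,x_4\}$, where $x_1$ is adjacent to $x_2$. First I would identify $W_{\Gamma_{3,1}}$ as the free product $\langle x_1,x_2\rangle \ast \langle x_3\rangle \ast \langle x_4\rangle$, where the first factor is the infinite dihedral group $D_\infty$ (generated by the commuting involutions $x_1,x_2$; note $\langle x_1, x_2\rangle \cong \Z/2 \times \Z/2$ is finite, so actually this is a free product of three finite groups), and apply Theorem~\ref{thm:virtfree} to conclude $\outo{\cox{\Gamma_{3,1}}}$ is virtually free. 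So it remains only to produce enough partial conjugations in the image of $f$ to guarantee the image is not virtually cyclic.

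The key step is to check which partial conjugations survive in the image. Because $(x_1,x_2,x_3\mids x_4)$ is a STIL, the vertex $x_4$ lies in a component $C$ of $\Gamma\setminus(\lk(x_1)\cap\lk(x_2)\cap\lk(x_3))$ containing none of $x_1,x_2,x_3$; I would argue that in the subgraph $\Gamma_{3,1}$ this forces $x_4$ to be separated by the relevant stars, so that the partial conjugations $\chi^{x_1}_{x_4}$, $\chi^{x_2}_{x_4}$ (well, these have the same effect since $x_1,x_2$ commute — I should check whether $\{x_1,x_2\}$ being an edge changes the combinatorics), $\chi^{x_3}_{x_4}$, and the conjugations of $\{x_1,x_2\}$ and $x_3$ by $x_4$, all appear in the image of $f$. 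Concretely, I expect the image to contain the automorphisms of $\langle x_1,x_2\rangle \ast \langle x_3\rangle \ast \langle x_4\rangle$ that conjugate one free factor by an element of another, and in particular the subgroup they generate realizes the configuration of Remark~\ref{rem:collins not vZ} with $g_i$ ranging over nontrivial elements of the three factors. Since two of those factors have order at least $2$ and one ($\langle x_1,x_2\rangle$) has order $4$, we get a free product of nontrivial cyclic groups of rank $3$ (or at least rank $2$ with one factor nontrivial), which is non-abelian and hence not virtually cyclic.

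Therefore $f$ maps $\outo{\cox\Gamma}$ onto a subgroup of finite index in the virtually free group $\outo{\cox{\Gamma_{3,1}}}$ that is not virtually cyclic; such a group is virtually free of rank $\geq 2$ and hence large, and largeness passes back along the surjection from a finite-index subgroup. I would conclude $\Out(W_\Gamma)$ is large.

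The main obstacle I anticipate is the bookkeeping in the first part of the previous paragraph: verifying precisely which partial conjugations with multipliers among $x_1,x_2,x_3,x_4$ are non-inner in $W_{\Gamma_{3,1}}$ and survive the factor map, taking care of the interaction between the edge $\{x_1,x_2\}$ (which means $\st(x_1)$ and $\st(x_2)$ both contain an extra vertex, affecting which components of $\Gamma\setminus\st(x_i)$ exist) and the STIL condition which is phrased in terms of $\lk(x_1)\cap\lk(x_2)\cap\lk(x_3)$ rather than stars. I would want to check that the STIL guarantees $x_4$ is genuinely separated off in $\Gamma_{3,1}$ by enough of these stars to yield partial conjugations in at least two "directions," and that together with a conjugation of some $x_i$ by $x_4$ these generate a non-virtually-cyclic subgroup via Remark~\ref{rem:collins not vZ}. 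The rest is a direct appeal to Theorem~\ref{thm:virtfree} and the standard fact that a non-virtually-cyclic virtually free group is large.
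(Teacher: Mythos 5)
Your setup is the same as the paper's (the factor map to $\outo{\cox{\Gamma_{3,1}}}$ plus Theorem~\ref{thm:virtfree}), but the step where you certify that $\Im(f)$ is not virtually cyclic has a genuine gap. You propose to realize the configuration of Remark~\ref{rem:collins not vZ}, which requires a \emph{cyclic} pattern of partial conjugations: in particular you need a partial conjugation whose multiplier is $x_4$ (conjugating $\langle x_1,x_2\rangle$ or $\langle x_3\rangle$ by $x_4$) and one such as $\chi^{x_1}_{x_3}$ to appear in $\Im(f)$. None of these is guaranteed. The image of $f$ only contains images of honest partial conjugations of $W_\Gamma$, so $\chi^{x_4}_{\{x_1,x_2\}}$ lies in $\Im(f)$ only if $\st(x_4)$ separates $\{x_1,x_2\}$ from $x_3$ in $\Gamma$, and $\chi^{x_1}_{x_3}$ only if $\st(x_1)$ separates $x_3$ from $x_4$; the STIL hypothesis controls the components of $\Gamma\setminus(\lk(x_1)\cap\lk(x_2)\cap\lk(x_3))$ and says nothing about these separations. (Compare the no-edge case, Proposition~\ref{prop:STIL no edge large}, where the paper has to invoke Lemmas~\ref{lem:PC with multiplier 4 gives FSIL} and~\ref{lem:two separating gives SIL} precisely to manage which of these optional generators occur.) So your assertion that the image contains ``the automorphisms that conjugate one free factor by an element of another'' is unjustified and false in general. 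A smaller slip: $\chi^{x_1}_{x_4}$ and $\chi^{x_2}_{x_4}$ are \emph{not} the same outer automorphism --- they conjugate $x_4$ by the two distinct nontrivial involutions $x_1\neq x_2$ of $\Z_2\times\Z_2$ --- and their being different is exactly what you need.

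The fix (and the paper's actual argument) uses only the three partial conjugations that the STIL does guarantee, namely $\chi^{x_i}_{x_4}$ for $i=1,2,3$: since the STIL component containing $x_4$ avoids $x_1,x_2,x_3$, each $\chi^{x_i}_{C}$ with $x_4\in C$ maps under $f$ to $\chi^{x_i}_{x_4}$. A product $\Theta=\chi^{x_{i_1}}_{x_4}\cdots\chi^{x_{i_\ell}}_{x_4}$ fixes $x_1,x_2,x_3$ and conjugates $x_4$ by $x_{i_1}\cdots x_{i_\ell}$, and such an outer automorphism is trivial only if that word is trivial in $\cox{\{x_1,x_2,x_3\}}\cong(\Z_2\times\Z_2)\ast\Z_2$. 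Hence $(\Z_2\times\Z_2)\ast\Z_2$ embeds in $\Im(f)$, which is therefore not virtually cyclic, and largeness follows as you say. So the overall strategy is sound, but the route through Remark~\ref{rem:collins not vZ} must be replaced by this embedding argument; Remark~\ref{rem:collins not vZ} is the wrong model here because all three available partial conjugations act on the \emph{same} free factor.
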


\begin{proof}
Let $f\colon\outoW\to\outo{\cox{\Gamma_{3,1}}}$
be the factor map induced by the full subgraph on vertices $\{x_1, x_2, x_3, x_4\}$.
Theorem~\ref{thm:virtfree} tells us $\outo{\cox{\Gamma_{3,1}}}$ is virtually
free, thus it is sufficient to prove that the image of the factor map
is not virtually cyclic.

The image contains the partial conjugations
$\chi^{x_i}_{x_4}$ for $1\le i\le 3$.
Consider a word:
$$\Theta=\chi^{x_{i_1}}_{x_4}\chi^{x_{i_2}}_{x_4}\ldots\
\chi^{x_{i_\ell}}_{x_4}.$$
Then $\Theta(x_i)=x_i$ for $i=1,2,3$, and
$\Theta(x_4)=x_{i_\ell}\ldots x_{i_1} x_4 x_{i_1}\ldots x_{i_\ell}$.
This is a non-trivial automorphism so long as $x_{i_1}\ldots
x_{i_\ell}\neq 1$ in $\cox{\{x_1,x_2,x_3\}}$.
Thus, we get an
embedding $\cox{\{x_1,x_2,x_3\}}\cong \left(\Z_2\times\Z_2\right)\ast\Z_2\hookrightarrow
\Im(f)$ and hence the image of $f$ is not virtually cyclic.
\end{proof}

When the full subgraph spanned by the STIL contains no edges,
we need to consider the partial conjugations present in the image of the factor map.
There are various possibilities regarding whether or not certain partial conjugations are present.
The following lemmas reduce the number of cases necessary to consider by showing that in certain situations we will have an FSIL,
with the result then following from by Proposition~\ref{prop:FSIL_large}.

The first lemma says that we can always consider cases where the image of $f$ does not contain any of the partial conjugations with multiplier $x_4$.

\begin{lem}\label{lem:PC with multiplier 4 gives FSIL}
    Suppose $\st(x_4)$ separates $x_1$ and $x_2$. 
    
    Then $\{x_1,x_2,x_4\}$ is an FSIL.
\end{lem}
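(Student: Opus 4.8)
The plan is to verify directly the three SIL conditions defining an FSIL on $\{x_1,x_2,x_4\}$, exploiting the standing hypothesis that $(x_1,x_2,x_3\mid x_4)$ is a STIL whose triple $\{x_1,x_2,x_3\}$ spans no edge. Write $L=\link{x_1}\cap\link{x_2}\cap\link{x_3}$, $N=\link{x_1}\cap\link{x_2}$ and $M=\link{x_1}\cap\link{x_4}$. First I would record the cheap consequences of the STIL: since $x_4$ lies in a component of $\Gamma\setminus L$ omitting each of $x_1,x_2,x_3$, no $x_i$ is adjacent to $x_4$; also $x_1\not\sim x_2$ by the no-edge assumption; and since $L\subseteq N$, deleting the larger set $N$ can only split the components of $\Gamma\setminus L$ further, so the component of $x_4$ in $\Gamma\setminus N$ still omits $x_1$ and $x_2$. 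Hence $(x_1,x_2\mid x_4)$ is already a SIL, and this will be the main input below.

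It then remains to show $(x_1,x_4\mid x_2)$ is a SIL: since $x_1\not\sim x_4$ and $x_2\notin M$ (because $x_2\not\sim x_1$), the only thing to prove is that $x_2$ lies in a component of $\Gamma\setminus M$ containing neither $x_1$ nor $x_4$. For the separation of $x_2$ from $x_1$, suppose instead that $q$ is a shortest path from $x_2$ to $x_1$ avoiding $M$. By hypothesis $q$ meets $\st(x_4)=\{x_4\}\cup\link{x_4}$; let $v$ be the last vertex of $q$ lying in $\st(x_4)$. If $v=x_4$, take the terminal subpath of $q$ from $x_4$ to $x_1$; otherwise $v\in\link{x_4}$ and, since $v\notin M$, also $v\notin\link{x_1}$, so we may prepend $x_4$ to the terminal subpath of $q$ from $v$ to $x_1$. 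In either case we obtain a path from $x_4$ to $x_1$ that avoids $M$, and since $(x_1,x_2\mid x_4)$ is a SIL it must pass through $N$; the $N$-vertex it meets lies on $q$ and is adjacent to $x_1$, so minimality of $q$ forces it to be the penultimate vertex of $q$, and as that vertex is also in $\link{x_2}$, minimality forces $q$ to have length two, say $q=(x_2,w,x_1)$ with $w\in N$. Then $w\notin M$, and $w\notin\st(x_4)$ (otherwise $w=x_4\in\link{x_1}$ or $w\in\link{x_1}\cap\link{x_4}=M$), while $x_1,x_2\notin\st(x_4)$; so $q$ misses $\st(x_4)$ entirely, a contradiction.

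For the separation of $x_2$ from $x_4$, suppose $q=(x_2=u_0,\dots,u_m=x_4)$ is a shortest path avoiding $M$; since $(x_1,x_2\mid x_4)$ is a SIL, $q$ must pass through $N$, and as the relevant $N$-vertex is adjacent to $x_2=u_0$, minimality pins it to be $u_1$. Then $u_1\in N=\link{x_1}\cap\link{x_2}$ and $u_1\notin M$, so $(x_1,u_1,x_2)$ is a path from $x_1$ to $x_2$ avoiding $M$, contradicting the separation of $x_2$ from $x_1$ established in the previous step. This finishes the proof that $(x_1,x_4\mid x_2)$ is a SIL; since all hypotheses are symmetric in $x_1$ and $x_2$ (the STIL is symmetric in $x_1,x_2,x_3$, the no-edge condition is symmetric, and ``$\st(x_4)$ separates $x_1$ and $x_2$'' is symmetric), the same argument gives that $(x_2,x_4\mid x_1)$ is a SIL, and therefore $\{x_1,x_2,x_4\}$ is an FSIL.

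The one genuinely non-formal step is the separation of $x_2$ from $x_4$ in $\Gamma\setminus M$: unlike the separation of $x_2$ from $x_1$, there is no monotonicity reason for it, and the point is to feed the already-proved separation statement back into the argument. The rest is careful book-keeping — tracking the last vertex of $q$ in $\st(x_4)$ and repeatedly shortening $q$ — where the only subtlety is to check that the degenerate length-two paths that appear really do collide with the non-adjacencies $x_1\not\sim x_4$ and $x_2\not\sim x_4$ coming from the STIL.
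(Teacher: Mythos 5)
Your proof is correct and follows essentially the same route as the paper: derive the SIL $(x_1,x_2\mid x_4)$ from the STIL, rule out a shortest path from $x_2$ to $x_1$ or to $x_4$ avoiding $\lk(x_1)\cap\lk(x_4)$ by minimality arguments, and obtain the third SIL from the $x_1\leftrightarrow x_2$ symmetry. The only cosmetic difference is that the paper first isolates the containment $\lk(x_1)\cap\lk(x_2)\subseteq\lk(x_4)$ (a direct consequence of $\st(x_4)$ separating $x_1$ from $x_2$) and applies it in both endpoint cases of a single minimal path to $\{x_1,x_4\}$, whereas you re-derive the equivalent facts inline via two separate shortest-path arguments, feeding the first separation into the second.
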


\begin{proof}
    First note that if $x_4$ is in a different connected component of $\Gamma$ to any of $x_1,x_2,x_3$,
    then it must be  in a different connected component from all three.
    If this is the case,
    then having $\st(x_4)$ separate $x_1$ and $x_2$ means that $x_1$ and $x_2$ must also be in different connected components.
    It then follows that $\{x_1,x_2,x_4\}$ is an FSIL.

    We now assume all vertices of the STIL are in the same connected component of $\Gamma$.
    It is clear that $(x_1,x_2,x_3\mids x_4)$ being a STIL implies that $(x_1,x_2\mids x_4)$ is a SIL.
    So we need to show that $(x_1,x_4\mids x_2)$ is a SIL
    (a symmetric argument will give the third SIL in the triple).
    We note that since $\st(x_4)$ separates $x_1$ and $x_2$, so will $\lk(x_4)$ and we  must have $\lk(x_1)\cap\lk(x_2)\subset\lk(x_4)$.

    Suppose that $(x_1,x_4\mids x_2)$ is not a SIL.
    Let $p=p_1,\ldots,p_k$ be a path of minimal length from $x_2$ to $\{x_1,x_4\}$ that avoids $\lk(x_1)\cap\lk(x_4)$.
    If $p_k=x_4$ then the path must intersect $\lk(x_1)\cap\lk(x_2)$, since the reverse of $p$ would be a path from $x_4$ to $x_2$ avoiding $\lk(x_1)\cap\lk(x_2)$.
    Now $\lk(x_1)\cap\lk(x_2)\subset\lk(x_4)$, and so we have $p_{k-1}\in\lk(x_1)\cap\lk(x_2)\cap\lk(x_4)$, contradicting our choice of path.
    If instead $p_k=x_1$, then the path intersects $\st(x_4)$.
    By the minimality of the path length, $p_{k-1}$ must be in $\lk(x_4)$.
    But then $p_{k-1} \in \lk(x_4)\cap\lk(x_1)$, a contradiction.
\end{proof}

\begin{lem}\label{lem:two separating gives SIL}
    Suppose we have that $\st(x_1)$ separates $x_2$ and $x_3$, and $\st(x_2)$ separates $x_1$ and $x_3$. Then $(x_1,x_2\mids x_3)$ is a SIL.

    In particular, if also $\st(x_3)$ separates $x_1$ and $x_2$, then $\{x_1,x_2,x_3\}$ is an FSIL.
\end{lem}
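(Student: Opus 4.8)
The plan is to prove the first statement by contradiction and then deduce the FSIL claim by symmetry. So assume $\st(x_1)$ separates $x_2$ from $x_3$, $\st(x_2)$ separates $x_1$ from $x_3$, but $(x_1,x_2\mids x_3)$ is not a SIL. First I would dispense with the easy parts of the SIL definition: since $\st(x_1)$ separates $x_2$ from $x_3$ we get $x_2\notin\st(x_1)$, so $x_1$ and $x_2$ are non-adjacent; and since $L:=\lk(x_1)\cap\lk(x_2)\subseteq\st(x_1)$ while $x_3\notin\st(x_1)$, we have $x_3\notin L$. Hence the only way the SIL can fail is that the component of $x_3$ in $\Gamma\setminus L$ also contains $x_1$ or $x_2$, i.e. there is a path from $x_3$ to $\{x_1,x_2\}$ inside $\Gamma\setminus L$. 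Since the two hypotheses are interchanged by swapping $x_1\leftrightarrow x_2$ (and ``SIL'' is symmetric in its first two arguments), I may take a shortest such path $p=(x_3=p_0,p_1,\dots,p_k=x_1)$ ending at $x_1$.

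The standard minimality observations are: no $p_j$ equals $x_1$ for $j<k$, and no $p_j$ equals $x_2$ at all --- otherwise a proper initial segment of $p$ would be a shorter path from $x_3$ to $\{x_1,x_2\}$ avoiding $L$. The key move is then to consider the \emph{first} time $p$ enters $\st(x_1)$ and the first time it enters $\st(x_2)$: let $m$ be minimal with $p_m\in\st(x_1)$ and $m'$ minimal with $p_{m'}\in\st(x_2)$. Both exist: $m$ because $p_k=x_1\in\st(x_1)$, and $m'$ because a path from $x_3$ to $x_1$ must cross $\st(x_2)$ as $\st(x_2)$ separates them. By the minimality observations $p_m\ne x_1$ and $p_{m'}\ne x_2$, so in fact $p_m\in\lk(x_1)$ and $p_{m'}\in\lk(x_2)$. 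If $m=m'$ then $p_m\in\lk(x_1)\cap\lk(x_2)=L$, contradicting that $p$ avoids $L$; so $m\ne m'$.

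In the case $m<m'$, minimality of $m'$ gives that $p_0,\dots,p_{m'-1}$ all lie in $\Gamma\setminus\st(x_2)$, and since they form a connected subpath containing $x_3$ they all lie in the component $B$ of $\Gamma\setminus\st(x_2)$ containing $x_3$; in particular $p_m\in B$. But $p_m\in\lk(x_1)$ and $x_1\notin\st(x_2)$, so the edge $x_1p_m$ lies in $\Gamma\setminus\st(x_2)$ and hence $x_1\in B$, contradicting that $\st(x_2)$ separates $x_1$ from $x_3$. The case $m'<m$ is the mirror image, using instead that $\st(x_1)$ separates $x_2$ from $x_3$. This contradiction proves $(x_1,x_2\mids x_3)$ is a SIL. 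I expect the only real care needed is in verifying that both indices $m,m'$ are genuinely defined and that $p_m,p_{m'}$ land in the links rather than at the vertices themselves --- everything else is routine graph surgery.

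Finally, for the ``in particular'': if $\st(x_3)$ also separates $x_1$ from $x_2$, then the first statement applies to each of the three pairs --- $(x_1,x_2\mids x_3)$ from the separations by $\st(x_1)$ and $\st(x_2)$, $(x_2,x_3\mids x_1)$ from those by $\st(x_2)$ and $\st(x_3)$, and $(x_1,x_3\mids x_2)$ from those by $\st(x_1)$ and $\st(x_3)$ --- so $(x_i,x_j\mids x_k)$ is a SIL for every $\{i,j,k\}=\{1,2,3\}$, which is exactly the definition of an FSIL.
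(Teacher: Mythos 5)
Your proof is correct and takes essentially the same route as the paper's: argue by contradiction with a shortest path from $x_3$ to $\{x_1,x_2\}$ avoiding $\lk(x_1)\cap\lk(x_2)$, reduced by symmetry to the case where the path ends at $x_1$. The only difference is in how the final contradiction is extracted: the paper uses minimality (via the shortcut through the adjacent vertex $x_2$) to force the crossing of $\st(x_2)$ to occur at the penultimate vertex $p_{k-1}$, which then lies in $\lk(x_1)\cap\lk(x_2)$, whereas your comparison of first-entry times into the two stars reaches the same contradiction by a slightly longer but equally valid connectivity argument.
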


\begin{proof}
    Suppose $(x_1,x_2\mids x_3)$ is not a SIL.
    Let $p$ be a path $p_1,\ldots,p_k$ of minimal length from $x_3$ to $\{x_1,x_2\}$ that avoids $\lk(x_1)\cap\lk(x_2)$.
    Without loss of generality, we may assume $p_k=x_1$.
    Then the path $p$ passes through $\st(x_2)$.
    By minimality of the length of $p$, we must have $p_{k-1}\in \lk(x_2)$.
    But then $p_{k-1}\in\lk(x_1)\cap\lk(x_2)$ and we have a contradiction.
\end{proof}
We now use Lemmas~\ref{lem:PC with multiplier 4 gives FSIL} and
\ref{lem:two separating gives SIL} to eliminate cases that have
FSILs, reducing the number of cases necessary to prove the following
proposition.

\begin{prop}\label{prop:STIL no edge large}
    Suppose that $\Gamma$ contains a STIL $(x_1,x_2,x_3\mids x_4)$ in which no two vertices are connected by a single edge. 
    
    Then $\Out(W_\Gamma)$ is large.
\end{prop}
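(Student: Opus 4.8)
The plan is to run the factor map $f\colon \Out^0(W_\Gamma)\to\Out^0(W_{\Gamma_4})$ attached to the full subgraph on $\{x_1,x_2,x_3,x_4\}$. First note that this subgraph really is $\Gamma_4$: it has no edge among $x_1,x_2,x_3$ by hypothesis, and if some $x_i$ ($i\le 3$) were adjacent to $x_4$ then, since neither lies in $\lk(x_1)\cap\lk(x_2)\cap\lk(x_3)$, the two would lie in the same component of $\Gamma\setminus(\lk(x_1)\cap\lk(x_2)\cap\lk(x_3))$, contradicting the STIL. If $\Gamma$ contains an FSIL we are done by Proposition~\ref{prop:FSIL_large}, so assume it does not. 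By Lemma~\ref{lem:PC with multiplier 4 gives FSIL} the star $\st(x_4)$ separates no pair among $x_1,x_2,x_3$, so all three lie in a single component of $\Gamma\setminus\st(x_4)$; consequently every partial conjugation with multiplier $x_4$ is sent by $f$ either to the identity or to the inner automorphism conjugating by $x_4$, i.e.\ to the trivial element of $\Out^0(W_{\Gamma_4})$. Thus $\Im(f)$ is generated by $f$-images of partial conjugations with multiplier in $\{x_1,x_2,x_3\}$.

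Next I would pin down three partial conjugations that always survive. Since $\lk(x_1)\cap\lk(x_2)\cap\lk(x_3)\subseteq\st(x_i)$ and this intersection separates $x_4$ from $x_1,x_2,x_3$, the component $D_i$ of $\Gamma\setminus\st(x_i)$ containing $x_4$ contains none of the other $x_j$; and because $x_j\notin\st(x_i)$ for $j\ne i$ we have $D_i\ne\Gamma\setminus\st(x_i)$, so $\chi^{x_i}_{D_i}$ is a genuine (non-inner) partial conjugation of $W_\Gamma$ with $f(\chi^{x_i}_{D_i})=\chi^{x_i}_{x_4}$. A short computation, just as in the proof of Proposition~\ref{prop:STIL_edge_large}, shows that $\langle\chi^{x_1}_{x_4},\chi^{x_2}_{x_4},\chi^{x_3}_{x_4}\rangle\cong(\Z/2)*(\Z/2)*(\Z/2)$ inside $\Out^0(W_{\Gamma_4})$: a reduced word in these involutions fixes $x_1,x_2,x_3$ and sends $x_4$ to a nontrivial conjugate of itself, hence is nontrivial and non-inner. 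So $\Im(f)$ always contains a copy of $(\Z/2)^{*3}$.

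It remains to upgrade this to largeness. Because $\Im(f)$ carries no multiplier-$x_4$ part, each generating partial conjugation $\chi^{x_i}_S$ ($i\le 3$) preserves the free factor $F=\langle x_1,x_2,x_3\rangle\le W_{\Gamma_4}$ and fixes $x_4$ up to $F$-conjugacy; this identifies $\Im(f)$ with a subgroup of $\Aut(F)\ltimes F$ whose normal subgroup "fixing $F$" is the copy of $(\Z/2)^{*3}$ above and whose quotient $Q$ is generated by partial conjugations of $F\cong(\Z/2)^{*3}$. Now I would case-split on $Q$, equivalently on which $\chi^{x_i}_{x_j}$ ($i,j\le 3$) and conjugation-type maps $\chi^{x_i}_{\{x_j,x_k\}}$ occur: if $Q$ is finite, then $(\Z/2)^{*3}$ has finite index in $\Im(f)$, so $\Im(f)$ is large; if $Q$ is large, then $\Im(f)$ surjects onto a large group, hence is large; and the intermediate configurations are controlled geometrically — the presence of a genuine $\chi^{x_i}_{x_j}$ means $\st(x_i)$ separates $x_j$ from $x_k$, so if this happens for two distinct indices $i$ then Lemma~\ref{lem:two separating gives SIL} yields a SIL $(x_i,x_j\mid x_k)$, which together with the SILs $(x_a,x_b\mid x_4)$ coming from the STIL feeds Lemmas~\ref{lem:no_overlap} and~\ref{lem:STILfind} to force an FSIL, contradiction; and if it happens for at most one index, $Q$ is again virtually trivial and $\Im(f)$ is virtually $(\Z/2)^{*3}$. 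In every case $\Im(f)$ is large, and since $\Out^0(W_\Gamma)$ surjects onto it and has finite index in $\Out(W_\Gamma)$, the group $\Out(W_\Gamma)$ is large.

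The main obstacle is the case analysis in the last step. Unlike the one-edge graph $\Gamma_{3,1}$, the group $\Out^0(W_{\Gamma_4})$ is itself large but \emph{not} virtually free (it cannot be handled by Theorem~\ref{thm:virtfree}), so merely exhibiting $(\Z/2)^{*3}\le\Im(f)$ does not close the argument — one genuinely needs the semidirect-product structure of $\Im(f)$, or some equivalent bookkeeping, to show the image is large. The delicate situations are those in which two of the stars $\st(x_1),\st(x_2),\st(x_3)$ separate the complementary pairs: there one must use the SILs produced by the STIL to promote Lemma~\ref{lem:two separating gives SIL} to a genuine FSIL, and checking that this always succeeds (or that the remaining possibilities leave $\Im(f)$ visibly virtually $(\Z/2)^{*3}$) is the heart of the proof.
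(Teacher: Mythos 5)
Your setup is sound and matches the paper's: the factor map to $\outo{\cox{\Gamma_4}}$, the observation that the multiplier-$x_4$ partial conjugations die in the image (via Lemma~\ref{lem:PC with multiplier 4 gives FSIL}), the identification of the three surviving generators $\chi^{x_i}_{x_4}$, and --- importantly --- the correct recognition that exhibiting $(\Z/2)^{\ast 3}$ inside $\Im(f)$ does not finish the proof, since $\outo{\cox{\Gamma_4}}$ is not virtually free and containing a free subgroup does not imply largeness. The gap is in how you close the remaining case. You claim that if $\st(x_i)$ separates the other two $x_j$'s for \emph{two} distinct indices $i$, then Lemmas~\ref{lem:no_overlap} and~\ref{lem:STILfind} force an FSIL. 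They do not: Lemma~\ref{lem:two separating gives SIL} gives you only the single SIL $(x_1,x_2\mids x_3)$ (an FSIL requires all three stars to separate), Lemma~\ref{lem:no_overlap} produces a STIL rather than an FSIL, and Lemma~\ref{lem:STILfind} has a STIL as one of its admissible outcomes --- which is no contradiction here, since a STIL is exactly what you are assuming exists. Indeed the paper's third case (generators of all three types present, i.e.\ exactly two separating stars) is a genuine, non-vacuous configuration; if it always produced an FSIL the authors would not have needed to treat it separately. So your case analysis has a hole precisely at what you correctly identify as "the heart of the proof."

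For comparison, the paper resolves all three configurations uniformly and without the semidirect-product bookkeeping you sketch: it invokes M\"uhlherr's presentation of $\Aut^0$ to write down an explicit finite presentation of $\Im(f)$ in each case, then kills the extra generators ($\chi^1_2$, and $\chi^2_1$ in the third case) and applies Tietze transformations to exhibit an explicit quotient isomorphic to $\Z_2\ast\Z_2\ast\Z_2$ or $(\Z_2\times\Z_2)\ast\Z_2$, which is virtually non-abelian free; largeness follows. Your structural description of $\Im(f)$ as an extension of a group $Q$ of "partial conjugations of $F$" by the $(\Z/2)^{\ast 3}$ is plausible but unproved (you would need to show the kernel of $\Im(f)\to\Out(F)$ is exactly that free product), and in any event the two-separating-stars case cannot be dismissed by contradiction --- it must be computed, as the paper does, or handled by some other direct argument.
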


\begin{proof} By Lemmas~\ref{lem:PC with multiplier 4 gives FSIL} and
\ref{lem:two separating gives SIL}, up to relabelling $x_1,x_2,x_3$,
we may have the following automorphisms in a generating set of the
image of the factor map. For short-hand we denote $\chi^{x_i}_{x_j}$
by $\chi^i_j$.
\begin{enumerate}
    \item The STIL automorphisms: $\chi^{1}_{4}$, $\chi^{2}_{4}$, $\chi^{3}_{4}$. These are always present.
    \item $\chi^{1}_{2}$, if $\st(x_1)$ separates $x_2$ and $x_3$.
    \item $\chi^{2}_{3}$, if $\st(x_2)$ separates $x_1$ and $x_3$.
\end{enumerate}
Up to relabeling, we can either have only (1), only (1) and (2), or
all three.

Let $f\colon \outo{W_\Gamma} \to \outo{W_{\Gamma_4}}$ be the factor map for the STIL.
Following M\"{u}hlherr \cite{Muhlherr:presentation}, we can obtain a
presentation for $\Im(f)$ in each of the three cases listed above.
M\"{u}hlherr gives a presentation for the $\Aut^0(W_\Gamma)$, so when we pass to $\Out^0(W_\Gamma)$, we add the relators that the product of all partial conjugations with a given multiplier is trivial.
We thus have a presentation with generating set consisting of all partial conjugations $\chi^v_C$ where $C$ is a connected component of $\Gamma\setminus \st(v)$,
and set of relators as follows\footnote{M\"{u}hlherr lists four types of relators, two are not needed when restricting the generating set as we have done, and a third becomes trivial when working with outer automorphisms.}:
\begin{enumerate}
    \item[(a)] $[\chi^v_C , \chi^w_D] = 1$ whenever either $[v,w]=1$ or $(C\cup\{v\})\cap(D\cup\{w\})=\emptyset$;
    \item[(b)] if $C_1,\ldots,C_r$ is a complete set of all connected components of $\Gamma\setminus\st(v)$, then $\chi^v_{C_1}\cdots \chi^v_{C_r} = 1$;
    \item[(c)] $\left(\chi^v_C\right)^2=1$.
\end{enumerate}
Using \cite[Lemmas 5.2, 5.6 \& 5.9]{Muhlherr:presentation} the
subgroup $\Im(f)$ of $\Out^0(W_{\Gamma_4})$ will have presentations
with relators only of the three types listed above.

In the first case, with $\Im(f)$ generated only by automorphisms of
type (1), the image of $f$ has presentation:
$$
\langle \chi^{1}_{4}, \chi^{1}_{\{2,3\}},
\chi^{2}_{4},\chi^{2}_{\{1,3\}}, \chi^{3}_{4},\chi^{3}_{\{1,2\}}
\mid 
\chi^{1}_{4}\chi^{1}_{\{2,3\}},
\chi^{2}_{4}\chi^{2}_{\{1,3\}},
\chi^{3}_{4}\chi^{3}_{\{1,2\}}, 
\left(\chi^1_4\right)^2, \left(\chi^2_4\right)^2, \left(\chi^3_4\right)^2
\rangle.
$$
Using Tietze transformations, we can eliminate generators and see
that this subgroup is isomorphic to $\Z_2\ast\Z_2\ast\Z_2$, which is
virtually free. Thus, $\Out(W_\Gamma)$ surjects onto a virtually
free group and the result holds in this case.

In the second case, when $\Im(f)$ is generated by automorphisms of
types (1) and $(2)$, we have generators $\chi^1_2$ and $\chi^1_3$
instead of $\chi^1_{\{2,3\}}$. This yields the following
presentation:
\begin{align*}
\langle
\chi^{1}_{4}, \chi^1_2,\chi^1_3,
\chi^{2}_{4},\chi^{2}_{\{1,3\}},
\chi^{3}_{4},\chi^{3}_{\{1,2\}}
\mid
 \chi^1_2\chi^1_3\chi^{1}_{4},
&\,\chi^{2}_{4}\chi^{2}_{\{1,3\}},
\chi^{3}_{4}\chi^{3}_{\{1,2\}},
[\chi^1_2,\chi^3_4],\\
& 
[\chi^1_3,\chi^2_4],
\left(\chi^1_4\right)^2, \left(\chi^2_4\right)^2, \left(\chi^3_4\right)^2, 
 \left(\chi^1_2\right)^2
\rangle.
\end{align*}
Thus, $\Im(f)/\langle\langle\chi^1_2\rangle\rangle$ has the
following presentation:
\begin{align*}
\langle \chi^{1}_{4},\chi^1_3, \chi^{2}_{4},\chi^{2}_{\{1,3\}},
\chi^{3}_{4},\chi^{3}_{\{1,2\}} 
\mid
\chi^1_3\chi^{1}_{4},
\chi^{2}_{4}\chi^{2}_{\{1,3\}},
 \chi^{3}_{4}\chi^{3}_{\{1,2\}},
\left(\chi^1_4\right)^2, \left(\chi^2_4\right)^2, \left(\chi^3_4\right)^2, 
 [\chi^1_3,\chi^2_4]
\rangle.
\end{align*}
Eliminating redundant generators reduces this to:
$$
\langle
\chi^{1}_{4},
\chi^{2}_{4},
\chi^{3}_{4}
\mid
[\chi^1_4,\chi^2_4],
\left(\chi^1_4\right)^2, \left(\chi^2_4\right)^2, \left(\chi^3_4\right)^2
\rangle
$$
which is a presentation of the virtually free group $(\Z_2\times \Z_2)\ast\Z_2$.
Thus, $\outo{W_\Gamma}$ surjects onto a virtually
free group 
and so is large.

Finally, when $\Im(f)$ is generated by automorphisms of all three
types, we get:
\begin{align*}
\langle
\chi^{1}_{4}, \chi^1_2,\chi^1_3,
\chi^{2}_{4},\chi^{2}_{3},\chi^2_1,
\chi^{3}_{4},\chi^{3}_{\{1,2\}}
\mid
&\chi^1_2\chi^1_3\chi^{1}_{4},
\chi^{2}_{4}\chi^{2}_{3}\chi^2_1,
\chi^{3}_{4}\chi^{3}_{\{1,2\}},
[\chi^1_2,\chi^3_4],
 [\chi^1_3,\chi^2_4],
[\chi^2_3,\chi^1_4],\\
&
[\chi^2_1,\chi^3_4],
\left(\chi^1_4\right)^2, \left(\chi^2_4\right)^2, \left(\chi^3_4\right)^2,
\left(\chi^1_2\right)^2, \left(\chi^2_3\right)^2
\rangle.
\end{align*}
Killing $\chi^1_2$ and $\chi^2_1$ gives us a quotient with presentation
\begin{align*}
\langle
\chi^{1}_{4}, \chi^1_3,
\chi^{2}_{4},\chi^{2}_{3},
\chi^{3}_{4},\chi^{3}_{\{1,2\}}
\mid
\chi^1_3\chi^{1}_{4},
\chi^{2}_{4}\chi^{2}_{3},
&\chi^{3}_{4}\chi^{3}_{\{1,2\}},
[\chi^1_3,\chi^2_4],\\
&[\chi^2_3,\chi^1_4],
\left(\chi^1_4\right)^2, \left(\chi^2_4\right)^2, \left(\chi^3_4\right)^2
\rangle
\end{align*}
By Tietze moves, this is equivalent to
$$
\langle
\chi^{1}_{4},
\chi^{2}_{4},
\chi^{3}_{4}
\mid
[\chi^1_4,\chi^2_4],
\left(\chi^1_4\right)^2, \left(\chi^2_4\right)^2, \left(\chi^3_4\right)^2
\rangle
$$
which is also a presentation of $(\Z_2\times \Z_2)\ast\Z_2$. As
above, this implies that $\outo{W_\Gamma}$ is large, completing the
proof.\end{proof}

Propositions~\ref{prop:FSIL_large}, \ref{prop:STIL_edge_large}, and \ref{prop:STIL no edge large} combine to give Theorem~\ref{thm:STIL FSIL implies large}.
Theorem~\ref{thmspecial:main} follows immediately from Theorems~\ref{thm:virtually abelian}~and~\ref{thm:STIL FSIL implies large}.

\section{Graph products}\label{sec:graph products}

Recall that given a graph $\Gamma$ and a function $p$ which assigns to each vertex of $\Gamma$ a prime power, we can define the graph product $G(\Gamma,p)=G_\Gamma$ to be the group generated by the vertices of $\Gamma$, so that $v\in V(\Gamma)$ has order $p(v)$, and two generators commute if and only if they are adjacent in $\Gamma$. We say that a SIL $(x,y\mids z)$ is a \emph{non-Coxeter SIL} if $\max\{p(x), p(y)\}\ge 3$, or equivalently, the subgroup of $G_\Gamma$ generated by $x$ and $y$ is not virtually cyclic.

We now explain how to generalize the dichotomy from RACGs to these graph products, describing how to modify the proof to work in this more general setting.

\begin{proof}[Proof of Theorem~\ref{thmspecial:main graph prod}]
First suppose that $\Gamma$ contains no STIL, no FSIL, and no non-Coxeter SIL.
By Lemma~\ref{lem:partial conj not commute}, the absence of a non-Coxeter SIL implies that if $v\in \Gamma$ with $p(v)>2$, then $\chi^v_C$ is in the center of $\outo{G_\Gamma}$ for every component $C$ of $\Gamma\setminus\st(v)$.
Thus, Lemma~\ref{lem:gen set derived subgroup} still holds. That is, $\outo{G_\Gamma}'$ is generated by commutators of partial conjugations, since no partial conjugations $\chi^v_C$ with $p(v)>2$ contribute to the derived subgroup.
Indeed, it is generated by commutators of partial conjugations with $p(v)=2$. Since $\Gamma$ has no STIL or FSIL, Lemma~\ref{lem:derived subgroup abelian} still applies, telling us that $\outo{G_\Gamma}'$ is abelian. Thus, since $\outo{G_\Gamma}$ has finite abelianization, it is virtually abelian.

Now we move on to the cases where $\Gamma$ contains either a STIL, FSIL or non-Coxeter SIL.
For any full subgraph $\Gamma'$ of $\Gamma$, we consider the factor map, described in Section~\ref{sec:large case}:
\[f\colon \outo{G_\Gamma}\to\outo{G_{\Gamma'}}.\]
If $\Gamma$ contains an FSIL or a STIL with one edge, the arguments of Propositions~\ref{prop:FSIL_large}~and~\ref{prop:STIL_edge_large} still apply, since Theorem~\ref{thm:virtfree} holds for all free products of three finite groups.

If $\Gamma$ contains a non-Coxeter SIL $(x,y\mids z)$,
then take $\Gamma'$ to be the subgraph spanned by $\{x,y,z\}$.
By Theorem~\ref{thm:virtfree}, $\outo{G_{\Gamma'}}$ is virtually free and $\Im(f)$ contains a subgroup isomorphic to $\Z_{p(x)}\ast\Z_{p(y)}$.
Since either $p(x)>2$ or $p(y)>2$, $\Im(f)$ is virtually non-abelian free and so $\outo{G_\Gamma}$ is large.

The final case is when $\Gamma$ contains a STIL $(x_1, x_2, x_3\mids
y)$ with no edge connecting any $x_i$ to any $x_j$, for $1\leq i,j
\leq 3$. If $p(x_i)>2$ for any $1\le i\le 3$, then $\Gamma$ contains
a non-Coxeter SIL and $\outo{G_\Gamma}$ is large, by the above
argument. We thus suppose that $p(x_i)=2$ for $1\le i\le 3$. If
$\st(y)$ separates $x_i, x_j$, then Lemma~\ref{lem:PC with
multiplier 4 gives FSIL} implies that $\Gamma$ has an FSIL and again
$\outo{G_\Gamma}$ is large. Thus we may assume that the $\st(y)$
does not separate $x_i, x_j$ for $i,j\in\{1,2,3\}$. Consider the
factor map to $\outo{G_{\Gamma'}}$, where $\Gamma'$ is the subgraph
spanned by $\{x_1, x_2, x_3,y\}$. The image contains only partial
conjugations with multipliers $x_1, x_2, x_3$, and the action of these automorphisms is precisely the same as in the right-angled Coxeter case. Thus, this subgroup has
presentation exactly as in Proposition~\ref{prop:STIL no edge
large}\footnote{For an alternate argument, see \cite{Gilbert:Automorphisms}}, and so
$\outo{G_\Gamma}$ is large, completing the proof.

\end{proof}

\bibliographystyle{alpha}
\bibliography{bibliography_outRACG}

\end{document}